\newcommand*{\medcup}{\mathbin{\scalebox{1.5}{\ensuremath{\cup}}}}%
\newtheorem{thm}{Theorem}
\newtheorem{lem}{Lemma}
\theoremstyle{definition}
\newtheorem{defn}{Definition}
\author{David Krieg and Elias Wegert}
\title{Rigidity of Circle Packings with Crosscuts}
\begin{document}

\begin{center}
{\LARGE Rigidity of Circle Packings with Crosscuts}
\vspace{2ex}\par
{\Large David Krieg%
\footnote{Supported by S{\"a}chsisches Landesgraduiertenstipendium}
and Elias Wegert%
\footnote{Supported by the Deutsche Foschungsgemeinschaft, grant We\,1704/8-2}}
\vspace{1ex}\par
\today
\end{center}
\vspace{2ex}\par

\begin{abstract}

\noindent
Circle packings with specified patterns of tangencies form a discrete
counterpart of analytic functions. In this paper we study univalent
packings (with a combinatorial closed disk as tangent graph)
which are embedded in (or fill) a bounded, simply connected domain.
We introduce the concept of crosscuts and investigate the rigidity of
circle packings with respect to maximal crosscuts. The main result
is a discrete version of an indentity theorem for analytic functions
(in the spirit of Schwarz' Lemma), which has implications to uniqueness
statements for discrete conformal mappings.

\end{abstract}

{\bf MSC classification:} 52C26, 52C26, 30C80, 30D40

{\bf keywords:}
circle packing, crosscut, prime ends, conformal mapping, Schwarz's lemma,
Apollonian packing
\bigskip

\section{Introduction} \label{sec.Intro}

The study of circle packings, as they are understood in this paper,
was initiated by Paul Koebe as early as in 1936 in the context of
conformal mapping, but the real success of the topic begun with William
Thurston's talk at the celebration of the proof of the Bieberbach
conjecture in 1985. The publication of Ken Stephenson's book \cite{SteBook}
inspired further research and made the topic accessible to a wide
audience.
Since then many classical results in complex analysis found their
discrete counterpart in circle packing.

In this paper we consider circle packings embedded in a bounded, simply
connected domain. We introduce the concept of crosscuts for domain-filling
circle packings, and study the rigidity of packings with respect
to maximal crosscuts (for definitions see below). The main result is
a discrete version of an indentity theorem for analytic functions,
which has implications to uniqueness results for boundary value problems
for circle packings, and especially to discrete conformal mappings.

To be more specific, we recall that the tangency relations of a circle
packing are encoded in a \emph{2-dimensional simplicial complex}
$K$, referred to as the combinatorics of the packing. In this paper it is
assumed that $K$ is a finite triangulation of a topological disk.

Circle packings are a mixture of flexibilty and rigidity.
Counting the degrees of freedom for the centers and the radii, and comparing
this with the number of conditions caused by the tangency relations, we see
that the first number exceeds the latter by $m+3$, where $m$ is the number of
boundary circles.
In fact, the set of all circle packings for a fixed complex
$K$ forms a smooth manifold of dimension $m+3$ (Bauer et al.~\cite{BauSteWgt}).
\label{NrOfBndVert}

So the question arises which sort of conditions are appropriate to eliminate
the flexibility of a packing and make it rigid. Motivated by our work on
nonlinear Riemann-Hilbert problems, we are interested in \emph{boundary
value problems} for circle packings. These problems involve $m$ boundary
conditions (one for each boundary circle) and three additional conditions,
which can be imposed in different form on boundary circles and interior circles
as well.

A standard boundary value problem of this kind consists in finding circle
packings with (given combinatorics and) prescribed radii of its boundary
circles.
Somewhat surprisingly, this problem has always a \emph{locally univalent}
solution,
and the solution is unique up to a rigid motion of the complete packing
(see \cite{SteBook}, Section~11.4, for details).

The existence of solutions is also known for a related more general problem,
the \emph{discrete Beurling problem}, where the radii of the boundary circles
are prescribed as functions of their centers (see~\cite{WgtRotKra}), but the
question of uniqueness has not yet been answered satisfactorily.

Last but not least there are several approaches to \emph{discrete conformal
mapping} via circle packing which fall into this category
(see Stephenson~\cite{SteBook}, in particular Chap.~19 and 20, with many
interesting comments on the history of this topic, also summarizing \cite{He},
\cite{RodSul}, \cite{Thur}).

In our favorite setting of discrete conformal mapping, the domain packing
$\mathcal{P}$ is a so-called \emph{maximal packing}, which `fills' the
complex unit disk $\mathbb{D}$, while the range packing $\mathcal{P}'$ is
required to `fill' a bounded, simply connected domain $G$.
That a packing `fills a domain $G$' basically means that all its circles lie
in the closure $\overline{G}$ of that domain and all its boundary circles
intersect (touch) the boundary $\partial G$ of $G$. For domains which are
not Jordan this has to be complemented by a more subtle condition (see
Definition~\ref{def.Pack.Fill}).

\smallskip\par

In a series of papers, Oded Schramm proved several outstanding results about
packings which fill a Jordan domain. His very general existence theorems do
not only address packings of circles, but of much more general \emph{packable
sets} (for an explanation see~\cite{Schramm1}).

Surprisingly, much less is known about uniqueness. It is clear that uniqueness
of a domain-filling (circle) packing can only be expected if one imposes
additional conditions which eliminate the (three) remaining degrees of freedom.
Whether this works depends on the type of normalization conditions and on
the geometry of the domain. For example, in his uniqueness proofs,
Schramm needs that the Jordan domain is (as he says) \emph{decent}
(see~\cite{Schramm2}).

This paper is devoted to the question which \emph{additional conditions}
are appropriate to make a \emph{domain filling} circle packing \emph{unique}.
In analogy to the standard normalization of conformal mappings, it seems
reasonable to fix the center of a distinguished circle (the so-called
\emph{alpha-circle}) at some point in $G$ and to require that the center
of a neighboring circle lies on a given ray emerging from that point.
Keeping the first condition, we have chosen another setting for the second
one. This condition, involving crosscuts, is non-standard, more flexible and
allows one to address other uniqueness problems too.
\smallskip\par

In order to give the reader a flavor of the result, we first state an analogous
theorem for analytic functions.
Recall that a \emph{crosscut} of a domain $G$ in the complex plane $\mathbb{C}$
is an open Jordan arc $J$ in $G$ such that $\overline{J}=J \cup\{a,b\}$ with
$a,b\in \partial G$ (see Pommerenke~\cite{PomBook}). Slightly abusing terminology,
we shall also denote $\overline{J}$ as a crosscut in $G$.

\begin{thm}[Identity Theorem for Analytic Functions]
\label{thm.ContRigid}
Let $J$ be a crosscut of a simply connected domain $G$, with $G^-$ and $G^+$
denoting the (simply connected) components of $G \setminus J$.
If $f: G \rightarrow G$ is analytic, $f(z_0)=z_0$ for some $z_0\in G^+$,
and $f(G^-)\subset G^-$, then $f(z)=z$ for all $z\in G$.
\end{thm}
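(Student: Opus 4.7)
The plan is to pass to the unit disk via a Riemann map and apply Schwarz's lemma. Choose a conformal isomorphism $\varphi: G \to \mathbb{D}$ with $\varphi(z_0) = 0$, and let $F := \varphi \circ f \circ \varphi^{-1}: \mathbb{D} \to \mathbb{D}$, which is holomorphic and fixes $0$. By Schwarz's lemma, either $F$ is a rotation $F(z) = \lambda z$ with $|\lambda| = 1$, or $|F(z)| < |z|$ strictly for every $z \neq 0$.

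The strict-contraction case is quickly resolved. A standard Schwarz-iteration argument (applying the maximum principle to $F(z)/z$ on compact disks $|z| \le r < 1$) gives $F^n \to 0$ locally uniformly on $\mathbb{D}$, hence $f^n \to z_0$ locally uniformly on $G$. Picking any $w \in G^-$, the hypothesis $f(G^-) \subset G^-$ gives $f^n(w) \in G^-$ for every $n$, so passing to the limit yields $z_0 \in \overline{G^-}$. But $G^+$ is open and disjoint from $G^-$, hence also from $\overline{G^-}$, contradicting $z_0 \in G^+$.

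The main obstacle is the rotation case with $\lambda \neq 1$. The plan is to exploit simple connectivity of $D^- := \varphi(G^-)$, which is a nonempty open subset of $\mathbb{D}$ not containing $0$ and satisfying $\lambda D^- \subset D^-$. Since $0 \notin D^-$ and $D^-$ is simply connected, a continuous branch $\Theta: D^- \to \mathbb{R}$ of $\arg$ exists. The function $\psi(z) := \Theta(\lambda z) - \Theta(z)$ is defined on all of $D^-$, continuous, and takes values in the discrete set $\arg \lambda + 2\pi \mathbb{Z}$; hence it is constant, say $\psi \equiv c$. The assumption $\lambda \neq 1$ means $\arg \lambda \notin 2\pi \mathbb{Z}$, so $c \neq 0$.

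For any $p \in D^-$, induction gives $\lambda^n p \in D^-$ and $\Theta(\lambda^n p) = \Theta(p) + n c$ for all $n \geq 0$. If $\lambda^N = 1$ for some $N \geq 1$, then $\lambda^N p = p$ forces $N c = 0$ and hence $c = 0$, a contradiction. If $\lambda$ is not a root of unity, the orbit $\{\lambda^n p\}_{n \geq 0}$ is dense on the Euclidean circle $\{|z| = |p|\}$, so there is a subsequence $n_k \to \infty$ with $\lambda^{n_k} p \to \lambda p \in D^-$; then $\Theta(\lambda^{n_k} p) \to \Theta(\lambda p)$ by continuity of $\Theta$ at $\lambda p$, whereas $\Theta(\lambda^{n_k} p) = \Theta(p) + n_k c$ is unbounded because $c \neq 0$, again a contradiction. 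Therefore $\lambda = 1$, $F = \mathrm{id}$, and $f = \mathrm{id}$ on $G$.
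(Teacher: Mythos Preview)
Your proof is correct, but the route differs noticeably from the paper's. Both arguments conjugate by a Riemann map $\varphi:G\to\mathbb{D}$ sending $z_0$ to $0$ and then appeal to Schwarz' Lemma, but they diverge from there. The paper exploits the crosscut geometry directly: it shows that $\varphi(J)$ is again a crosscut of $\mathbb{D}$ (citing Pommerenke), picks the point $z_1\in J$ of minimal modulus, observes that $z_1$ realises the distance from $0$ to $\overline{G^-}$, and uses $f(z_1)\in\overline{G^-}$ to get $|f(z_1)|\ge |z_1|$; Schwarz then forces a rotation, and the fact that $\overline{G^-}\cap\mathbb{T}$ is a proper arc rules out nontrivial rotations. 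By contrast, you never touch the crosscut after transferring to $\mathbb{D}$: you split via the Schwarz dichotomy, dispose of the strict-contraction branch by an iteration argument ($f^n(w)\to z_0$ with $f^n(w)\in G^-$, contradicting $z_0\in G^+$), and handle the rotation branch using only that $D^-=\varphi(G^-)$ is simply connected and misses $0$, via a continuous branch of the argument. Your approach is longer but buys something: it avoids the boundary-behaviour input (that crosscuts map to crosscuts under conformal maps) and would go through verbatim for any simply connected $G^-\subset G$ with $z_0\notin\overline{G^-}$ and $f(G^-)\subset G^-$, not just those arising from a crosscut. The paper's argument is shorter and more geometric, but leans on the crosscut structure at both ends.
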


\begin{proof}
Let $g:G \rightarrow\mathbb{D}$ be a conformal mapping of $G$ onto the unit
disk $\mathbb{D}$ with $g(z_0)=0$. Then $g$ maps the crosscut $J$ of $G$ to
a crosscut of $\mathbb{D}$ (see~\cite{PomBook}, Prop.2.14) and the composition
$g \circ f \circ g^{-1}$ satisfies the assumptions of the lemma with
$G:=\mathbb{D}$ and $z_0:=0$. Hence it suffices to consider this special case.

Let $z_1$ be a point on $J$ with $|z_1|=\min_{z\in J} |z|$. Since $J$ is a
crosscut in $\mathbb{D}$, and $0=z_0\in G^+$, we have
\[
0<|z_1|\le \min\left\{|z|: z\in \overline{G^-}\right\} <1.
\]
By continuity, $f(G^-)\subset G^-$ and $z_1\in \overline{G^-}$ imply that
$f(z_1)\in \overline{G^-}$, and hence $|f(z_1)|\ge |z_1|$. Invoking Schwarz'
Lemma, we get $f(z)=cz$ in $\mathbb{D}$, where $c$ is a unimodular
constant. Finally, the only rotation of $\mathbb{D}$ which maps $G^-$
into itself is the identity.
\end{proof}

Although Schwarz' Lemma has already been investigated in the framework of
circle packing (see~\cite{Rodin}, or~\cite{PomBook} Chap.~13) the following
interpretation of Theorem~\ref{thm.ContRigid} is new.
Though precise definitions will be deferred to the next section, we hope that
Figure~\ref{fig.RigidPack} helps to get an intuitive understanding of the
setting.

\begin{figure}[H]
\begin{center}
\begin{overpic}{Figure1a}
\fontsize{12pt}{14pt}\selectfont
\put(109,25){\makebox(0,0)[cc]{$J$}}
\put(110,75){\makebox(0,0)[cc]{$D_\alpha$}}
\end{overpic}
\hspace{0.05\textwidth}
\includegraphics{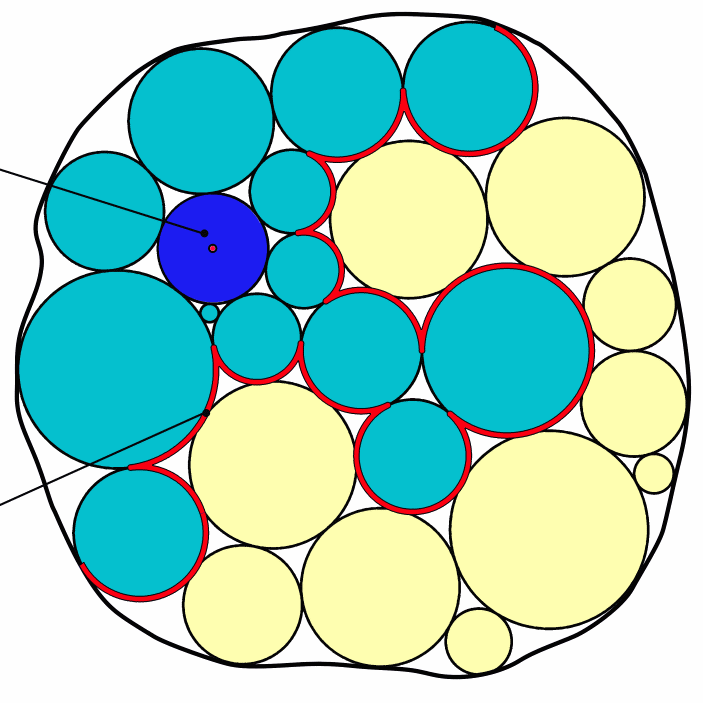}
\caption{A domain-filling packing $\mathcal{P}$ with a crosscut and
a maximal crosscut}
\label{fig.RigidPack}
\end{center}
\end{figure}

\noindent
\begin{thm}[Rigidity of Circle Packings with Crosscuts]
\label{thm.CircRigid}
Assume that a univalent circle packing $\mathcal{P}=\{D_v\}$ for a complex $K$
with vertex set $V$ fills a bounded, simply connected domain $G$.\label{def.G}
Let $J$ be a (maximal) crosscut of $\mathcal{P}$ in $G$, such that $G^-$ is a
simply connected component of $G\setminus J$, and denote by $V^-$ and $V^+$ the
sets of vertices of $K$ associated with circles in $G^-$ and $G^+:=G \setminus
\overline{G^-}$, respectively. Let $D_\alpha$ be an interior circle
of $\mathcal{P}$ which is contained in $G^+$.

Assume further that a second univalent packing $\mathcal{P}'=\{D_v'\}$ for $K$ is
contained in $G$, such that $D_\alpha$ and $D_\alpha'$ have the same center,
and $D'_v\subset G^-$ for all $v\in V^-$. Then $D_v'=D_v$ for all \emph{accessible}
vertices $v\in V$.
\end{thm}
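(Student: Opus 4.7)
My plan is to run a three-step argument paralleling the proof of Theorem~\ref{thm.ContRigid}: reduce to a canonical hyperbolic setting, obtain the inequality $\rho'_v \leq \rho_v$ from a discrete Schwarz--Pick lemma, and extract the reverse inequality at a carefully chosen vertex using the crosscut hypothesis.

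First, I would fix a Riemann map $g : G \to \mathbb{D}$ with $g(z_\alpha) = 0$ and work throughout with the hyperbolic metric of $G$; write $\rho_v, \rho'_v$ for the hyperbolic radii of $D_v, D'_v$. Since $\mathcal{P}$ fills $G$, $\mathcal{P}'$ is contained in $G$, and $z'_\alpha = z_\alpha$, the discrete Schwarz--Pick lemma for circle packings (see~\cite{Rodin}, or~\cite{SteBook} Ch.~8) yields $\rho'_v \leq \rho_v$ at every interior vertex $v$, together with the hyperbolic center contraction $d_h(z_\alpha, z'_v) \leq d_h(z_\alpha, z_v)$. Its equality clause asserts that if $\rho'_v = \rho_v$ at some interior vertex, then there is a conformal automorphism $\varphi$ of $G$ with $D'_v = \varphi(D_v)$ for every accessible~$v$.

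Second, I would extract a matching lower bound $\rho'_w \geq \rho_w$ at some well-chosen vertex $w$---the discrete analogue of the point $z_1 \in J$ in the continuous proof. The natural candidate is a vertex $w \in V^-$ whose disk $D_w$ minimizes $d_h(z_\alpha, D_v)$ over $V^-$, so that $D_w$ meets the crosscut $J$. The hypothesis $D'_w \subset G^-$ then gives $d_h(z_\alpha, D'_w) \geq d_h(z_\alpha, \overline{G^-}) = d_h(z_\alpha, D_w)$; combining this with the Schwarz--Pick contractions above should force equality at $w$. Invoking the equality clause then produces the automorphism $\varphi$. The same-center condition at $\alpha$ forces $\varphi(z_\alpha) = z_\alpha$, so $\varphi$ is a rotation of $G$ about $z_\alpha$; the hypothesis $\varphi(\overline{G^-}) \subset \overline{G^-}$ rules out every nontrivial rotation, so $\varphi = \mathrm{id}$ and $D'_v = D_v$ on all accessible vertices.

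The main obstacle is the ``critical vertex'' step: combining $\rho'_v \leq \rho_v$ with $d_h(z_\alpha, z'_v) \leq d_h(z_\alpha, z_v)$ does \emph{not} by itself yield $d_h(z_\alpha, D'_v) \leq d_h(z_\alpha, D_v)$, because the two effects can partially cancel in the identity $d_h(z_\alpha, D'_v) = d_h(z_\alpha, z'_v) - \rho'_v$. One must therefore either strengthen the discrete Schwarz--Pick comparison at the extremal $w$ (for instance via a chain argument along the circles crossed by $J$, or by taking $w$ to be an outermost vertex of $V^-$ forming a rigid outer flower) or work with a different monotone quantity adapted to the crosscut geometry. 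Secondary technical points are the handling of boundary circles, which may have infinite hyperbolic radius when internally tangent to $\partial G$, and the precise definition of \emph{accessible} vertex, which controls how far equality can be propagated from the interior to the combinatorial boundary.
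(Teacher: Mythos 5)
Your plan is a genuinely different route from the paper's, but its foundational step does not exist in the generality you need, and the paper's own examples refute it. The discrete Schwarz--Pick lemma (\cite{Rodin}, \cite{SteBook}) compares a packing in $\mathbb{D}$ with \emph{the maximal packing of} $\mathbb{D}$, whose boundary circles are horocycles; it does not transfer to a packing filling a general bounded simply connected domain $G$, because the Riemann map $g:G\to\mathbb{D}$ does not carry circles to circles, so $g(\mathcal{P})$ is no longer a circle packing and a Euclidean disk $D_v\subset G$ has no canonical hyperbolic radius. More decisively, your claimed inequality-plus-equality clause would imply that any two packings filling $G$ with a common alpha-center are related by a conformal automorphism of $G$ fixing $z_\alpha$ --- for a generic $G$ this forces them to coincide, with no crosscut hypothesis at all. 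That conclusion is false: the last two images of Figure~\ref{fig.Beta} show two completely different fillings of a Jordan domain with common alpha-center (indeed with $D_\beta$ and $D'_\beta$ touching $\partial G$ at the same point), and Theorem~\ref{thm.conf1} needs the extra hypothesis $D'_\beta\subset D_\beta$ precisely because no such automatic rigidity holds. Uniqueness of domain-filling packings is exactly the open problem motivating the paper (cf.\ Schramm's need for ``decent'' domains in \cite{Schramm2}), so any argument that derives it as a lemma must be wrong. You yourself flag a second gap --- the extremal-vertex step, where $\rho'_v\le\rho_v$ and center contraction do not combine to give $d_h(z_\alpha,D'_v)\le d_h(z_\alpha,D_v)$ --- and I see no way to close it even granting the contraction.

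For contrast, the paper avoids hyperbolic function theory entirely. It works with a \emph{combinatorial} crosscut $L$ of $K$ and its maximal geometric realization, proves by a prime-end interlacing argument that the pair $(\mathcal{P},\mathcal{P}')$ always admits a \emph{loner} (a disk $D'_v$ that meets no upper neighbor $D_w$ other than $D_v$, Lemma~\ref{lem.Dia.Loner}), and then performs combinatorial surgery: each loner is shifted across the crosscut, strictly decreasing $|V_L^+|$, until the upper neighbors degenerate into a \emph{slit} ending at $v_\alpha$ along which $D'_v=D_v$ (Lemma~\ref{lem.Upper.Goal}). Iterated reflected concatenation of slits then pins down every accessible boundary disk (Lemma~\ref{lem.BndUni}), and Stephenson's uniqueness theorem for packings with prescribed boundary radii finishes the proof on the kernel $K^*$. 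If you want to pursue a function-theoretic proof, you would first have to establish a comparison principle for two packings in a general domain $G$ that genuinely uses the crosscut hypothesis from the start --- which is essentially what the loner machinery accomplishes combinatorially.
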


\noindent
We point out that everything hinges on the assumption about the common center
of the two alpha-circles. Since we do not assume that $\mathcal{P}'$ fills $G$,
it is solely this condition which prevents that $\mathcal{P}'$ can be completely
moved into $G^-$.
\smallskip\par

The notion of accessible vertices will be explicated in Definition~\ref{def.Access}.
Here we only note that \emph{all vertices} $v\in V$ \emph{are accessible} if and
only if the complex $K$ is \emph{strongly connected}, which means $K$ satisfies the
following conditions
(i) and (ii):
\begin{itemize} \label{CondAccess}
\itemsep0mm
\item[{\rm(i)}]
Every boundary vertex has an interior neighbor.
\item[{\rm(ii)}]
The interior of $K$ is connected.
\end{itemize}
Note that some authors of the circle packing community make the general assumption
that the underlying complex $K$ is strongly connected (see~\cite{SteBook}).
For circle packings with this simpler combinatoric structure the theorem yields
\emph{complete rigidity} with respect to crosscuts, i.e., $D_v'=D_v$ for all $v\in V$.
\smallskip\par

Figure~\ref{fig.CounterEx} illustrates some effects which can be observed for
packings with general combinatorics.
The picture on the left shows an Apollonian packing $\mathcal{P}$ with four generations.
The highlighted line is a maximal crosscut, disks in the ``lower domain'' are the white
ones, disks in the ``upper domain'' are the colored ones.
The disk with the darkest color is the alpha-disk with fixed center.
The accessible disks are those which can be connected with the
alpha-disk by a chain of interior disks (see Definition~\ref{def.Access}).

\begin{figure}[H]
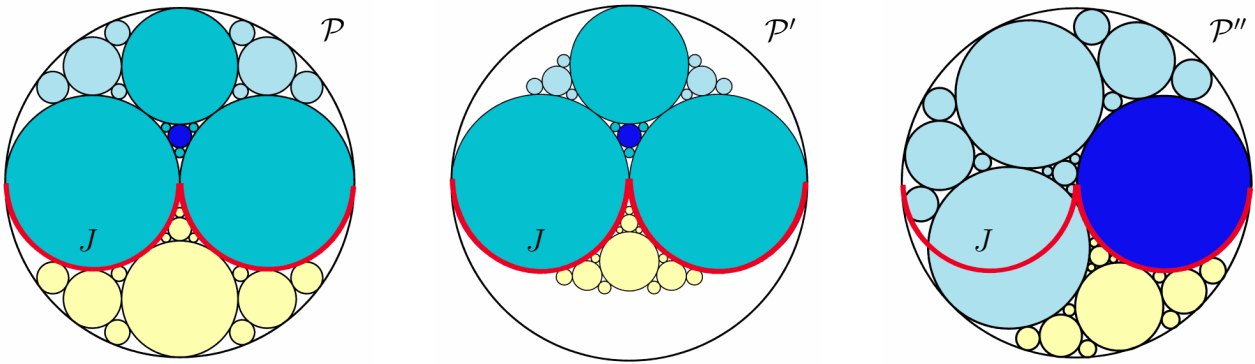

\label{page.counterex}
\begin{center}
\begin{overpic}{Figure2a}
\fontsize{12pt}{14pt}\selectfont
\put(26,35){\makebox(0,0)[cc]{$J$}}
\put(90,90){\makebox(0,0)[cc]{$\mathcal{P}$}}
\end{overpic}
\hfill
\begin{overpic}{Figure2b}
\fontsize{12pt}{14pt}\selectfont
\put(26,35){\makebox(0,0)[cc]{$J$}}
\put(90,90){\makebox(0,0)[cc]{$\mathcal{P}'$}}
\end{overpic}
\hfill
\begin{overpic}{Figure2c}
\fontsize{12pt}{14pt}\selectfont
\put(26,35){\makebox(0,0)[cc]{$J$}}
\put(90,90){\makebox(0,0)[cc]{$\mathcal{P}''$}}
\end{overpic}
\caption{Some examples illustrating assumptions and assertions of
Theorem~\ref{thm.CircRigid}}
\label{fig.CounterEx}
\end{center}
\end{figure}

The packing $\mathcal{P}'$, depicted in the middle, satisfies the assumptions of the
theorem. In this example, only the accessible disks of $\mathcal{P}'$
(shown in darker colors) coincide with their partners in $\mathcal{P}$.
The non-accessible disks (shown in white and lighter colors) differ from the corresponding
disks in $\mathcal{P}$.

The example on the right illustrates that the result need not hold if the
alpha-disk is a boundary disk. The depicted packing $\mathcal{P}''$ satisfies
all other assumptions (for the same crosscut), but, apart from the alpha disk,
it is completely different from the packing $\mathcal{P}$ shown on the left-hand
side.
\smallskip\par

\noindent
The result has an intuitive interpretation when we think of circle packings as
dynamic structures: Suppose that $\mathcal{P}$ fills $G$, and allow its
circles to move (change position and size) in such a way, that they all remain in $G$,
the center of the alpha-circle is fixed in $G^+$, and the circles in $G^-$ are not
allowed to leave $G^-$. Then only those circles which are not accessible can be
moved, while the core part of the packing is \emph{rigid}.
\smallskip\par

In fact we shall even prove a stronger result, where the condition $D'_v\subset G^-$
need only be satisfied for $v$ in $U^-$, which stands for the set of those vertices
$v\in V^-$ associated with circles $D_v$ \emph{touching the crosscut} $J$.
\smallskip\par

\noindent
In order to illustrate the analogies with Theorem~\ref{thm.ContRigid}, we
interpret the result in the framework of discrete analytic functions:
The circle packing $\mathcal{P}$ filling $G$ is the domain packing, the packing
$\mathcal{P}'$ lies in $G$, so that $\mathcal{P}\rightarrow \mathcal{P}'$ defines
a discrete analytic function from $G$ into itself.
Fixing the centers of the alpha-circles of both packings at the same point $z_0$
corresponds to the normalization $f(z_0)=z_0$. Finally, the condition $D'_v\subset G^-$
for all $v\in V^-$ expresses the invariance of the subdomain $G^-$.

Since the packing $\mathcal{P}$ represents the identity function on $G$, it is
natural to suppose that $\mathcal{P}$ is univalent. Contrary to the continuous
setting of Theorem~\ref{thm.ContRigid}, also $\mathcal{P}'$ was assumed to be
univalent in Theorem~\ref{thm.CircRigid}. It is challenging to investigate what
happens when this condition is dropped.
\smallskip\par

\noindent
Terminological remark. For our purposes it would be better to work with
\emph{disk} packings instead of \emph{circle} packings. Though we stay with
the traditional notion, we shall often speak of the disks in a circle
packing. In order to avoid cumbersome formulations, we also say that
\emph{a circle $\partial D$ lies in a domain $G$} when this holds for the
open disk $D$ bounded by that circle. We already made use of this convention above.
\smallskip\par

\section{Circle Packings} \label{sec.CircPack}

\noindent
In order to make the paper self-contained we recall basic concepts and
notions of topology and circle packing (for details we refer to Henle~\cite{Hen}
and Stephenson~\cite{SteBook}).
\smallskip\par

\noindent
{\bf Some Geometry}.
If $A$ and $B$ are subsets of the
(complex) plane, we say that $A$ \emph{intersects} $B$ if $A\cap B\not=\emptyset$.
If $A$ is a disk, then the phrase $A$ \emph{touches} $B$ is in general used when
$\overline{A}\cap\overline{B}\not =\emptyset$ and $A\cap B=\emptyset$.
In this case we also say that the circle $\partial A$ touches $B$. As usual,
the symbol $\partial $ denotes the boundary operator.
\label{page.bndop}
\smallskip\par

By a \emph{curve} $\gamma$ we understand the image of a continuous mapping
$\varphi:[a,b]\rightarrow \mathbb{C}$. The points $\varphi(a)$ and $\varphi(b)$
are said to be the \emph{initial point} and the \emph{terminal point} of $\gamma$,
respectively; both are referred to as \emph{endpoints} of $\gamma$.
A \emph{Jordan arc} and a \emph{Jordan curve} are the homeomorphic images
of a segment and a circle, respectively. By an \emph{open Jordan arc} we mean a
Jordan arc without its endpoints.

Let $J$ be
an \emph{oriented} Jordan curve. For $p,q\in J$
with $p\not= q$ we denote by $J(p,q)$ the (oriented) open subarc of $J$ with
initial point $p$ and terminal point $q$.
If $p,q,r$ are three pairwise different points on $J$, we say that $q$
\emph{lies between $p$ and $r$ on $J$} if $q\in J(p,r)$.  Corresponding to
whether $q$ lies between $p$ and $r$, or $q$ lies between $r$ and $p$, the
\emph{orientation of the triplet $(p,q,r)$ with respect to $J$} is said to be
\emph{positive} or \emph{negative}, respectively.
\smallskip\par

Let $G$ be a bounded, simply connected domain in $\mathbb{C}$. A conformal mapping
$g: \mathbb{D}\rightarrow G$ of $\mathbb{D}$ onto $G$ has a continuous extension to
$\overline{\mathbb{D}}$ if and only if $\partial G$ is a \emph{closed curve}, i.e., a
continuous image of the unit circle $\mathbb{T}$ (see~\cite{PomBook} Theorem~2.1).
This extension (which we again denote by $g$) is a homeomorphism between
$\overline{\mathbb{D}}$ and $\overline{G}$ if (and only if) $G$ is a
\emph{Jordan domain}, i.e., $\partial G$ is a Jordan curve (see~\cite{PomBook},
Theorem~2.6).

In general, the conformal mapping $g$ induces a one-to-one correspondence between
the points on $\mathbb{T}$ and certain
equivalence classes of open Jordan arcs $\gamma$ in $G$ with terminal point $q$ on
$\partial G$, so called \emph{prime ends}. For the details we refer to
Pommerenke~\cite{PomBook}, Chap~2, and Golusin~\cite{GolBook}, Section~2.3.

If $G$ contains a disk $D$ which touches the boundary $\partial G$ at some point
$p\in\partial D\cap\partial G$, then every Jordan arc with starting point in $D$ and
terminal point $p$ is contained in the same equivalence class. Hence there is a well
defined \emph{prime end} of $G$ \emph{associated with $p$ by $D$}.
\label{page.asspe}
\smallskip\par

\noindent
{\bf Complexes}.
The skeleton of a circle packing is a \emph{simplicial 2-complex} $K$.
Throughout this paper it is assumed that $K$ is a {\em combinatorial closed disk},
i.e., it is finite, simply connected and has a nonempty boundary.
Simply speaking of a complex, we always mean a complex of this class.
Properties of complexes which are relevant in circle packing are summarized in
Lemma~3.2 of~\cite{SteBook}.
\label{def.complex}
\smallskip\par

We denote the sets of vertices, edges and faces of $K$ by $V,E,F$, respectively.
The edge adjacent to the vertices $u$ and $v$ is denoted by $e(u,v)$ or
$\langle u,v\rangle$, where the first version stands for the \emph{non-oriented
edge}, while the second means the oriented edge from $u$ to $v$.
Similarly, a face of $K$ with vertices $u,v,w$ is written as $f(u,v,w)$
(non-oriented) or $\langle u,v,w\rangle$ (oriented), respectively.
\label{def.VEF}

Two vertices $u$ and $v$ are said to be \emph{neighbors} if they are connected
by an edge $e(u,v)$ in $E$.

For any vertex $v\in V$ we denote by $E(v)$ the set of edges adjacent to $v$. This
set is endowed with a natural cyclic (counterclockwise) ordering, so that for
$e_1,e_2\in E(v)$ definitions like $\{e\in E(v): e_1<e\le e_2\}$ make sense.
\label{page.DefEdgeOrd}
\smallskip\par

Any edge $e$ of $K$ is adjacent to one or two faces. In the first case $e$
is a \emph{boundary edge}, otherwise it is an \emph{interior edge} of $K$.
\emph{Boundary vertices} are those vertices of $K$ which are adjacent to a
boundary edge. The sets of boundary edges and boundary vertices are
denoted $\partial E$ and $\partial V$, respectively, the vertices in
$V \setminus\partial V$ are called \emph{interior vertices}.\label{def.BndVE}
We point out that a boundary vertex can be adjacent to many other boundary
vertices, and that an edge which connects two boundary vertices
need not be a boundary edge (cf.~Figure~\ref{fig.Flower}, left).
\smallskip\par

We let $B(v)$ denote the smallest sub-complex of $K$ which contains a vertex $v$
and all its neighbors. If $v$ is an interior vertex $B(v)$ is said to be the
\emph{flower} of $v$, if $v$ is a boundary vertex we speak of an \emph{incomplete
flower}.\label{def.flower}
Note that $B(v)$ need \emph{not} contain all edges which connect neighbors
of $v$ (see Figure~\ref{fig.Flower}).
\smallskip\par

Since $K$ is a triangulation with non-void boundary, it must have at least
three boundary vertices. The natural cyclic ordering of boundary edges, corresponding
to the orientation of the boundary of the triangulated surface, induces a cyclic
ordering of the boundary vertices.
With respect to this ordering, any boundary vertex has a \emph{precursor}
and a \emph{successor} which are well-defined.
\smallskip\par

Speaking of a \emph{chain}, we mean a finite sequence $(c_1,\ldots,c_n)$
of vertices, edges or faces,
such that neighboring elements $c_j$ and $c_{j+1}$ are adjacent to a common edge
(if the $c_j$ are vertices or faces) or a common vertex (if the $c_j$ are edges),
respectively.

\begin{figure}[H]
\begin{center}
\includegraphics{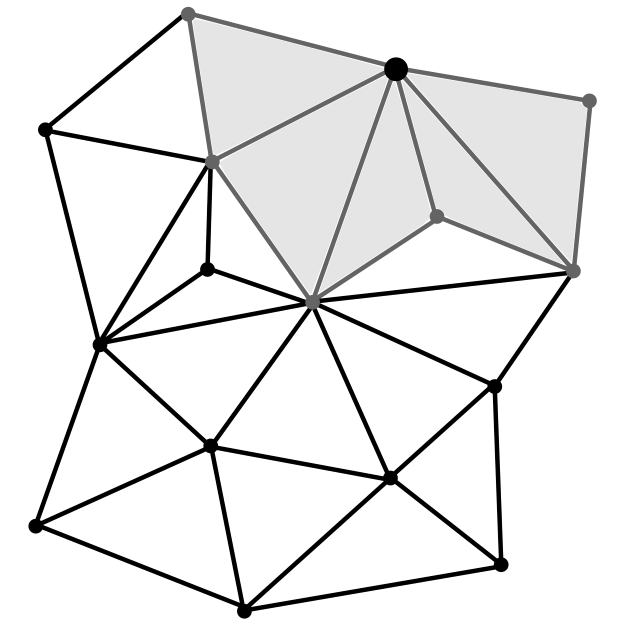}
\hfill
\includegraphics{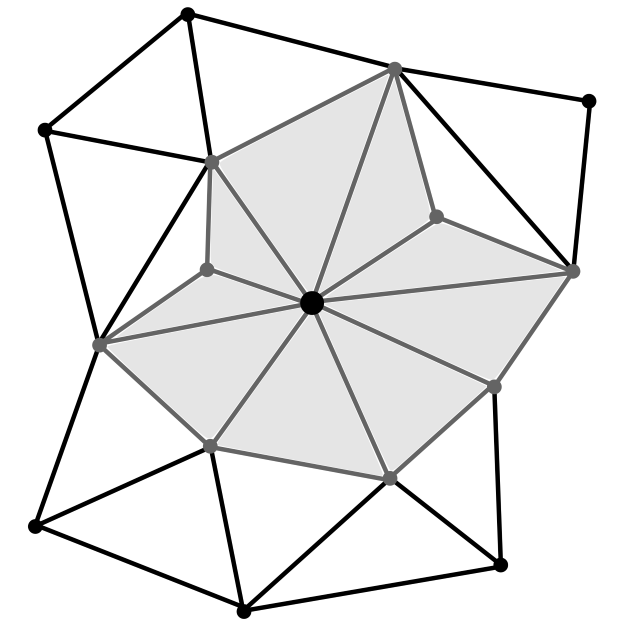}
\hfill
\includegraphics{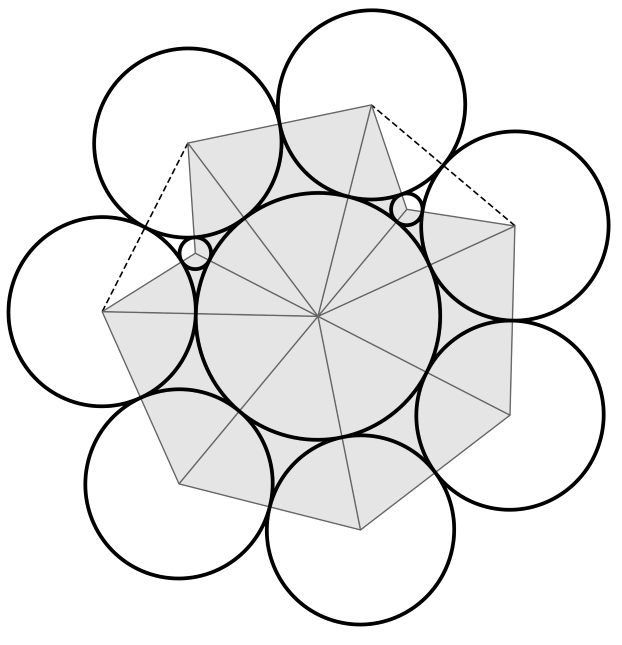}
\caption{The sub-complex of a (incomplete) flower and a corresponding packing}
\label{fig.Flower}
\end{center}
\end{figure}
\vspace{-4mm}
\noindent

\smallskip\par

On page~\pageref{page.counterex} we have illustrated some limitations of
Theorem~\ref{thm.CircRigid}. The reason for the observed effects is the relative
independence of some substructures from the rest of the packing. This is
described more precisely in the following definition.

\begin{defn}\label{def.Access}
Let $K$ be a complex with a distinguished interior vertex, the \emph{alpha-vertex}
$v_\alpha$. Then a vertex $v\in V$ is called \emph{accessible} (from $v_\alpha$)
if there is a chain of
vertices $(v,v_1,\ldots,v_n,v_\alpha)$ such that $v_1,\ldots,v_n$ are interior
vertices. The set of all accessible vertices of $K$ is denoted by $V^*$, the set of
all edges $e(u,v)\in E$ with $u,v\in V^*$ by $E^*$, and the set of all faces
$f(u,v,w)\in F$ with $u,v,w\in V^*$ by $F^*$. The \emph{kernel} $K^*$ of $K$ is
defined as the simplicial-2-complex arising from $V^*,E^*,F^*$, that is
$K^*(V^*,E^*,F^*)\subset K$.
\end{defn}

Recall that a complex $K$ is \emph{strongly connected}, if the interior of
$K$ is connected, and every boundary vertex has an interior neighbor.
The following lemma establishes a relation between this property and
accessible vertices, as already stated on page~\pageref{page.counterex}.

\begin{lem}\label{lem.solid1}
Let $K$ be a complex with a distinguished interior alpha-vertex $v_\alpha$.
All vertices of $K$ are accessible, i.e., $K=K^*$, if and only if $K$ is
strongly connected.
\end{lem}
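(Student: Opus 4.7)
The plan is to prove the two implications directly, using the definition of accessibility and the two defining conditions of strong connectedness.

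For the implication ``strongly connected $\Rightarrow$ all vertices accessible'': I would take an arbitrary $v\in V$ and split into two cases. If $v$ is interior, connectedness of the interior (condition (ii)) yields a chain of interior vertices from $v$ to $v_\alpha$; dropping $v$ from the front gives exactly a chain as required by Definition~\ref{def.Access}. If $v$ is a boundary vertex, condition (i) supplies an interior neighbor $u$ of $v$, and then (ii) again gives an interior chain from $u$ to $v_\alpha$; prepending $v$ yields an accessible chain $(v,u,\ldots,v_\alpha)$.

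For the converse ``$K=K^*$ $\Rightarrow$ strongly connected'': I would verify (i) and (ii) separately. For (i), let $v$ be a boundary vertex. By assumption $v$ is accessible, so there is a chain $(v,v_1,\ldots,v_n,v_\alpha)$ with $v_1,\ldots,v_n$ interior. If $n\ge 1$, then $v_1$ is an interior neighbor of $v$; if $n=0$, then $v_\alpha$ itself is an interior neighbor of $v$. Either way (i) holds. For (ii), I observe that whenever $u$ is an interior vertex, its accessibility chain $(u,u_1,\ldots,u_m,v_\alpha)$ consists entirely of interior vertices (since $u$ and $v_\alpha$ are interior and the intermediate $u_j$ are interior by definition). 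Consequently every interior vertex can be joined to $v_\alpha$ by a chain of interior vertices, so by concatenation any two interior vertices lie in the same interior-connected component; this gives connectedness of the interior of $K$.

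The argument is essentially a bookkeeping exercise, and the only mild subtlety is permitting the degenerate chain $(v,v_\alpha)$ with no intermediate vertices in the case $n=0$, which is legitimate because $v_\alpha$ is itself interior. No difficult step is anticipated; the proof is short and relies entirely on unpacking Definition~\ref{def.Access} and the two clauses of strong connectedness recalled on page~\pageref{CondAccess}.
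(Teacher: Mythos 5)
Your proposal is correct and follows essentially the same route as the paper's proof: both directions are handled by directly unpacking Definition~\ref{def.Access} against the two clauses of strong connectedness, connecting interior vertices through $v_\alpha$ and extracting an interior neighbor of each boundary vertex from its accessibility chain. Your explicit treatment of the degenerate chain $(v,v_\alpha)$ is a small point the paper leaves implicit, but it changes nothing of substance.
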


\begin{proof}
Assume that all vertices of $K$ are accessible. Let $v\neq w$ be two interior
vertices of $K$.
Since $v$ and $w$ are both accessible, there are two chains of vertices
$(v,v_1,\ldots,v_i,v_\alpha)$ and $(w,w_1,\ldots,w_j,v_\alpha)$ such that
$v_1,\ldots,v_i$,$w_1,\ldots,w_j$ are interior vertices, hence the interior
of $K$ is connected. Let $u$ be a boundary vertex of $K$. Because $u$ is
accessible, there is a chain of vertices $(u,u_1,\ldots,u_n,v_\alpha)$ such
that $u_1,\ldots,u_n$ are interior vertices, hence every boundary vertex of
$K$ has an interior neighbor, and $K$ is strongly connected.

Conversely, assume that $K$ is strongly connected. Let $v$ be an \emph{interior
vertex} of $K$. Since the interior of $K$ is connected, we  find a chain
of vertices $(v,v_1,\ldots,v_n,v_\alpha)$ such that $v_1,\ldots,v_n$ are
interior vertices. If $w$ is a \emph{boundary vertex} of $K$, it has some
neighboring interior vertex $u$, and there exists a chain of interior
vertices $(u,u_1,\ldots,u_n,v_\alpha)$ from $u$ to $v_\alpha$.
Then the chain $(w,u,u_1,\ldots,u_n,v_\alpha)$ connects $w$ with $v_\alpha$.
Hence any vertex of $K$ is accessible.
\end{proof}

Now, all vertices of the kernel $K^*$ are accessible per definition, so if we
can show that $K^*$ is a complex, then it is also strongly connected. In order
to prove this we provide the following two lemmas.

\begin{lem}\label{lem.kernel1}
Let $K^*$ be the kernel of a complex $K$, and let $V^*,V$ be their vertex sets,
respectively. Then every vertex $v\in\partial V\cap V^*\neq\emptyset$ has
exactly two other vertices of $\partial V\cap V^*$ as neighbors in $K^*$.
Moreover all accessible neighbors of $v$ form an incomplete flower
$B^*(v)\subset K^*$ around $v$.
\end{lem}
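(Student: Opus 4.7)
The plan is to identify the set of accessible neighbors of $v$ explicitly as a contiguous sub-arc of $v$'s flower. Write $v$'s neighbors in the linear order of the incomplete flower $B(v)$ as $w_1,\ldots,w_k$, with $w_1$ the precursor and $w_k$ the successor of $v$ along $\partial V$. By accessibility of $v$, the defining chain $(v,v_1,\ldots,v_n,v_\alpha)$ supplies an interior vertex $v_1=w_s$ with $1<s<k$. Let $U$ denote the connected component of $v_\alpha$ in the subgraph $K^\circ\subseteq K$ spanned by the interior vertices of $K$; by construction $w_s\in U$.

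Next, pick $a<s$ maximal and $b>s$ minimal with $w_a,w_b\in\partial V$; both exist since $w_1,w_k\in\partial V$. By the choice of $a,b$, the vertices $w_{a+1},\ldots,w_{b-1}$ are all interior and, being joined consecutively by the flower edges $e(w_j,w_{j+1})\in E$, they form together with $w_s$ an interior-vertex path in $K^\circ$, so they all lie in $U$. Concatenating this path with the interior tail $(v_2,\ldots,v_n,v_\alpha)$, and pre-pending $w_a$ or $w_b$ respectively for the endpoints, one obtains accessibility chains for every $w_j$ with $a\le j\le b$. Hence $\{w_a,\ldots,w_b\}\subseteq V^*$, and the only members of $\partial V$ among them are $w_a$ and $w_b$.

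The harder direction is to show that no $w_i$ with $i<a$ or $i>b$ is accessible. The case $a=1$ is vacuous, so assume $a>1$; the case $i>b$ is symmetric. Then $e(v,w_a)$ is an interior edge of $K$ joining two boundary vertices, i.e.\ a chord of the topological disk $K$. Together with one of the two $\partial K$-arcs from $v$ to $w_a$ it forms a Jordan curve that splits $K$ into two closed sub-disks $K_L$ (containing $w_1,\ldots,w_{a-1}$) and $K_R$ (containing $w_{a+1},\ldots,w_k$). Since no interior vertex of $K$ lies on the chord, every edge of $K^\circ$ sits entirely in $K_L$ or entirely in $K_R$, and consequently no chain of interior vertices can cross the chord. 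As $w_s\in K_R$ is interior-chain-connected to $v_\alpha$, it follows that $v_\alpha$, and thus all of $U$, lies in $K_R$. An accessible interior $w_i$ with $i<a$ would now have to lie in $U\subseteq K_R$, contradicting $w_i\in K_L$. An accessible boundary $w_i$ with $i<a$ would require an interior neighbor in $U$; but $w_i$ lies strictly on the $K_L$-side of the chord, so every face incident to it is contained in $K_L$, forcing every interior neighbor of $w_i$ into $K_L$ and thereby contradicting $U\subseteq K_R$.

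Putting all pieces together, the accessible neighbors of $v$ are exactly the contiguous arc $w_a,\ldots,w_b$, whose only members of $\partial V$ are $w_a$ and $w_b$; this proves the first assertion. Since the edges $e(v,w_j),e(w_j,w_{j+1})$ and faces $f(v,w_j,w_{j+1})$ for $a\le j\le b-1$ all have their vertices in $V^*$, they belong to $K^*$, so together with $v$ they assemble into an incomplete flower $B^*(v)\subset K^*$ around $v$, which gives the second assertion. I expect the main obstacle to be the Jordan-curve separation step for the chord $e(v,w_a)$, which relies essentially on $K$ being a combinatorial closed disk.
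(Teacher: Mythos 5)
Your proof is correct. The first half---producing an interior accessible neighbor $w_s$ of $v$ and growing it into the maximal run $w_a,\ldots,w_b$ of consecutive petals of the incomplete flower whose interior members all lie in the component $U$ of $v_\alpha$ in the interior subgraph---is essentially the paper's construction of the edge chain $C$ through $u$ joining two accessible boundary neighbors. Where you genuinely diverge is in the ``at most two'' step. The paper argues by contradiction: a hypothetical third accessible boundary neighbor $w_3$ would, by concatenating the accessibility chains of $v$ (through $u$) and of $w_3$ at $v_\alpha$, produce a closed chain $C_3$ encircling one of the two endpoints of $C$, which is impossible for a boundary vertex. You instead use the chords $e(v,w_a)$ and $e(v,w_b)$ as crosscuts of the topological disk $K$ and observe that no chain of interior vertices can cross such a chord, so no accessibility chain reaches a petal outside $\{w_a,\ldots,w_b\}$. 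Your route yields the sharper intermediate fact that the accessible neighbors of $v$ are \emph{exactly} this contiguous arc, from which the incomplete-flower assertion falls out immediately (the paper essentially reads that part off from its figure); the price is the routine verification that a chord separates the faces of $K$ into two classes and that edges with both endpoints interior cannot meet it---which you correctly identify as the crux and which does hold since distinct simplices intersect only in common faces. Two small points you gloss over: the clause $\partial V\cap V^*\neq\emptyset$ in the statement is not addressed (the paper dispatches it in one sentence), and for an accessibility chain with $n=0$ the ``interior vertex $v_1$'' should be read as $v_\alpha$ itself.
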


\begin{proof}
Since all neighbors of accessible interior vertices of $K$ are accessible, we have
$\partial V\cap V^*\neq\emptyset$.
Let $v\in\partial V\cap V^*$ be such a (boundary) vertex. Because $v$ is accessible
there must be a neighbor $u$ of $v$ in $K$, which is an accessible interior vertex
in $K$, $u\in V^*\setminus\partial V$. Looking at the incomplete flower $B(v)$ of $v$
in $K$ it becomes clear that there must be an edge chain $C$ in $B(v)$, connecting two
different boundary vertices $w_1,w_2\in\partial V$, such that $C$ contains $u$ and
no other boundary vertices of $K$ except $w_1,w_2$. Hence $w_1$ and $w_2$ are accessible,
$w_1,w_2\in V^*$, and $v$ has at least two other boundary vertices of
$\partial V\cap V^*$ as neighbors.

Assume now that there is a third boundary vertex $w_3\in\partial V$, different
from $w_1$ and $w_2$, which is an accessible neighbor of $v$. Let $C_1,C_2\subset C$
be the chains of vertices connecting $u$ with $w_1,w_2$, respectively.
Since $v$ and $w_3$ are accessible, there are chains $(v,u,c_1,...,c_i,v_\alpha)$
and $(w_3,c_1',...,c_j',v_\alpha)$, such that $c_1,...,c_i,c_1',...,c_j'$ are
accessible interior vertices of $K$. The concatenation
$C_3:=(v,u,c_1,...,c_i,v_\alpha,c_j',...,c_1',w_3,v)$ must encircle either $w_1$
or $w_2$ (see Figure~\ref{fig.Connected} left), which is impossible because both
are boundary vertices. Hence, every boundary vertex $v\in\partial V\cap V^*$
has exactly two other boundary vertices of $\partial V\cap V^*$ as neighbors,
and the (incomplete) flower $B^*(v)$ around $v$ with respect to $K^*$ has the
structure depicted in Figure~\ref{fig.Connected} (middle), with
$\{v_1,...,v_n\}\in V^*\setminus\partial V$ and $n\geq 1$.
\end{proof}

\begin{lem}\label{lem.kernel2}
The kernel $K^*$ of a complex $K$ is a triangulation.
\end{lem}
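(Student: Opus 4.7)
The plan is to verify that $K^*$ carries the four structural properties of a combinatorial closed disk (cf.~Lemma~3.2 of~\cite{SteBook}): closure under subsimplices, the correct (possibly incomplete) flower at every vertex, nonempty boundary, and simple connectedness of $|K^*|$. Closure under subsimplices is immediate from Definition~\ref{def.Access}, since if $f(u,v,w)\in F^*$ then $u,v,w\in V^*$ and hence $e(u,v),e(v,w),e(u,w)\in E^*$.

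For the flower structure I distinguish two cases. If $v\in V^*\setminus\partial V$, I claim that the full $K$-flower $B(v)$ already sits in $K^*$: for every neighbor $u$ of $v$, prepending $u$ to the chain $(v,v_1,\ldots,v_n,v_\alpha)$ witnessing $v\in V^*$ yields $(u,v,v_1,\ldots,v_n,v_\alpha)$, whose intermediate vertices are all interior since $v$ is interior, so $u\in V^*$. Then any two consecutive neighbours $u_i,u_{i+1}$ in the cyclic order of $E(v)$ are joined by an edge of $K$ with both endpoints in $V^*$, placing both this edge and the face $f(v,u_i,u_{i+1})$ in $K^*$. If instead $v\in\partial V\cap V^*$, Lemma~\ref{lem.kernel1} already hands me the required incomplete flower $B^*(v)\subset K^*$. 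Nonemptiness of $\partial V\cap V^*$ was already noted in the proof of Lemma~\ref{lem.kernel1}: starting at $v_\alpha$ and extending outward through accessible interior neighbors, one reaches a boundary vertex of $K$ in finitely many steps, and that vertex is itself accessible.

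The principal obstacle is simple connectedness of $|K^*|$. I plan to reduce it to the claim that no connected component $C$ of the subgraph induced by $V\setminus V^*$ is entirely interior to $K$, i.e.\ $C\cap\partial V\neq\emptyset$ for every such $C$. Granted this, the inaccessible part of $K$ consists only of \emph{boundary peninsulas} attached to $\partial K$, never of \emph{interior islands} enclosed by $V^*$, and a standard surface argument then shows that their removal from the closed topological disk $|K|$ leaves again a closed topological disk, i.e.\ $|K^*|$ is simply connected. The claim itself I would prove by contradiction: if $C\cap\partial V=\emptyset$, then since $K$ is connected and $v_\alpha\in V^*\setminus C$, some $v\in C$ is adjacent in $K$ to some $u\in V^*$. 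But $v$ is then an interior vertex of $K$, so prepending $v$ to an interior chain from $u$ to $v_\alpha$ produces an interior chain from $v$ to $v_\alpha$, contradicting $v\notin V^*$.

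The truly delicate point is making the ``standard surface argument'' fully rigorous in the combinatorial setting; I would address this either by verifying that each peninsula meets $\partial K$ in an arc (so that cutting it off is an elementary surface operation) or, alternatively, by combining the local flower description from step two with an Euler-characteristic computation $|V^*|-|E^*|+|F^*|=1$ to conclude that the closed surface $|K^*|$ with nonempty boundary must be a disk.
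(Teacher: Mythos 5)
Your verification of the local structure of $K^*$ --- closure under subsimplices, the full flower at accessible interior vertices of $K$ (via the observation that every neighbour of an accessible interior vertex is again accessible), the incomplete flower $B^*(v)$ at accessible boundary vertices supplied by Lemma~\ref{lem.kernel1}, and the nonemptiness of $\partial V\cap V^*$ --- is exactly the paper's argument, and it is all that Lemma~\ref{lem.kernel2} actually asks for. In this paper ``triangulation'' refers to the local and connectivity properties (i)--(vi) of Lemma~3.2 in \cite{SteBook} (each edge lies in one or two faces, the faces at a vertex form an ordered chain, each vertex meets zero or two boundary edges, connectedness, proper intersection and orientation of faces); simple connectedness is deliberately \emph{not} part of this lemma and is established separately in Lemma~\ref{lem.solid3}. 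So the portion of your proposal that addresses the stated lemma is correct and coincides with the paper's proof.

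The extra material on simple connectedness, however, contains a genuine flaw that you would hit on carrying it out. In your contradiction argument you take a component $C$ of $V\setminus V^*$ with $C\cap\partial V=\emptyset$, find $v\in C$ adjacent to some $u\in V^*$, and prepend $v$ to a witnessing chain for $u$. For $(v,u,u_1,\ldots,u_n,v_\alpha)$ to witness $v\in V^*$ you need \emph{all} of $u,u_1,\ldots,u_n$ to be interior, in particular $u$ itself. But $u$ can never be interior in this situation: an accessible interior vertex of $K$ has all of its neighbours accessible, so it cannot be adjacent to $C$ at all. Hence every $u\in V^*$ adjacent to $C$ is necessarily a \emph{boundary} vertex of $K$, and the prepending step never applies --- the only case that actually occurs is precisely the one your argument does not cover. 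The claim itself (no interior island of inaccessible vertices) is true, but it requires an enclosure/separation argument: the accessible $K$-boundary vertices surrounding $C$ cut the interior of $K$ into pieces meeting only at boundary vertices, and those surrounding vertices not incident to the piece containing $v_\alpha$ turn out to be inaccessible themselves, so $C$'s component in $V\setminus V^*$ does reach $\partial V$ after all. The paper sidesteps all of this in Lemma~\ref{lem.solid3} by first showing $\partial V^*=\partial V\cap V^*$ and then observing that a disconnected $\partial K^*$ would force some $K^*$-boundary vertex to be enclosed by a vertex cycle and hence to be interior in $K$. Your concluding ``standard surface argument''/Euler-characteristic step is likewise only sketched, but the substantive gap is the one above.
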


\begin{proof}
Let $K^*(V^*,E^*,F^*)$ be the kernel of a complex $K$. Lemma~3.2 of~\cite{SteBook}
tells us that $K^*$ is a triangulation if and only if it has the following
properties (i)--(vi) --- what we will check on the run.

(i) \emph{$K^*$ must be connected.} As we already used above, every two vertices
$v,w\in V^*$ can be connected by a chain of accessible vertices via the
alpha-vertex.

(ii) \emph{Every edge of $E^*$ must belong to either one or two faces of $F^*$.}
Since $E^*\subset E$ and $F^*\subset F$, it is impossible that an edge of $E^*$ belongs
to more than two faces of $F^*$. So it remains to show that there is
no isolated edge, which does not belong to a face. Let $e\in E^*$ be an edge with
$e=e(v,w)$, that is $v,w\in V^*$. If $v$ is an interior vertex of $K$, then
all its neighbors are accessible, too, so $B(v)\subset K^*$. If $v$ is a boundary
vertex of $K$, than all its accessible neighbors form an incomplete flower
$B^*(v)\subset K^*$ around $v$ (Lemma~\ref{lem.kernel1}). In both cases $e$ is
contained in at least one face of $F^*$.

(iii) \emph{Every vertex $v$ of $K^*$ belongs to at most finitely many faces, and these
form an ordered chain in which each face shares an edge from $v$ with the next.}
The first assertion holds, because $K^*$ is a subset of $K$. The second part follows
easily by considering the flower $B(v)$ (if $v$ is an interior vertex) or
the imcomplete flower $B^*(v)$ (if $v$ is a boundary vertex).

(iv) \emph{Every vertex $v$ of $K^*$ belongs either to no boundary edge, or to exactly
two boundary edges.}
Using once more the flower around $v$ immediately shows this property.

(v),(vi) \emph{Any two faces are either disjoint, share a single vertex, or share a single
edge, and all of them are properly oriented.}
This follows directly from $K^*\subset K$.
\end{proof}

\begin{figure}[H]
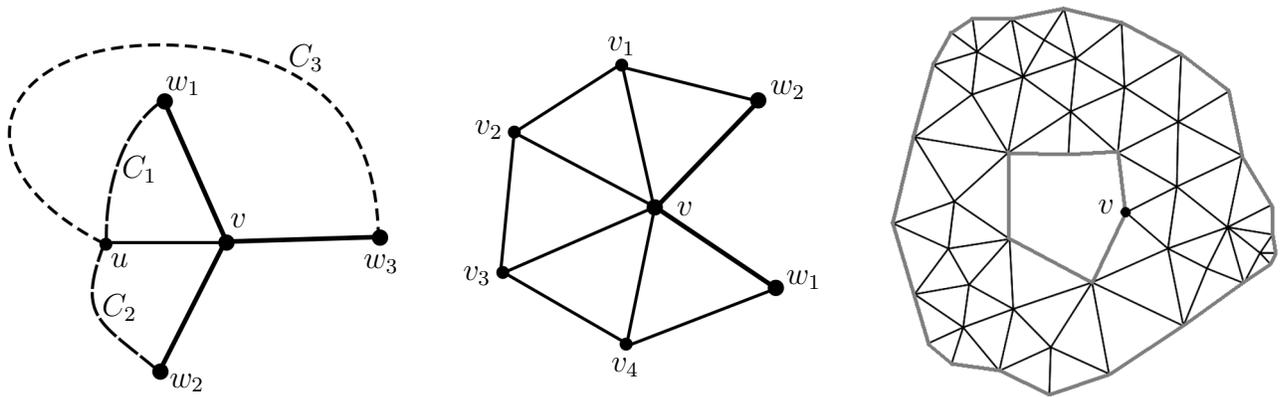

\begin{center}
\begin{overpic}{Figure4a}
\fontsize{12pt}{14pt}\selectfont
\put(60,47){\makebox(0,0)[cc]{$v$}}
\put(46,81){\makebox(0,0)[cc]{$w_1$}}
\put(47,6){\makebox(0,0)[cc]{$w_2$}}
\put(96,36){\makebox(0,0)[cc]{$w_3$}}
\put(30,37){\makebox(0,0)[cc]{$u$}}
\put(35,60){\makebox(0,0)[cc]{$C_1$}}
\put(30,25){\makebox(0,0)[cc]{$C_2$}}
\put(77,88){\makebox(0,0)[cc]{$C_3$}}
\end{overpic}
\hspace{0.02\textwidth}
\begin{overpic}{Figure4b}
\fontsize{12pt}{14pt}\selectfont
\put(61,50){\makebox(0,0)[cc]{$v$}}
\put(91,32){\makebox(0,0)[cc]{$w_1$}}
\put(87,80){\makebox(0,0)[cc]{$w_2$}}
\put(45,91){\makebox(0,0)[cd]{$v_1$}}
\put(15,70){\makebox(0,0)[rc]{$v_2$}}
\put(12,33){\makebox(0,0)[rc]{$v_3$}}
\put(46,10){\makebox(0,0)[cu]{$v_4$}}
\end{overpic}
\hspace{0.02\textwidth}
\begin{overpic}{Figure4c}
\fontsize{12pt}{14pt}\selectfont
\put(54,49){\makebox(0,0)[cc]{$v$}}
\end{overpic}
\caption{Constructions for the proof of Lemma~\ref{lem.kernel1} and~\ref{lem.solid3}}
\label{fig.Connected}
\end{center}
\end{figure}
\noindent

The crucial properties of the kernel $K^*$ are summarized in the following lemma.

\begin{lem}\label{lem.solid3}
The kernel $K^*$ of a complex $K$ is a strongly connected complex with
$\partial V^*=\partial V\cap V^*$.
\end{lem}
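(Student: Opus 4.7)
The plan is to split the lemma into three subclaims: (a) the boundary identity $\partial V^*=\partial V\cap V^*$; (b) that $K^*$ is a combinatorial closed disk (finite, simply connected, with nonempty boundary); and (c) that $K^*$ is strongly connected. The triangulation axioms (i)--(vi) have already been checked in Lemma~\ref{lem.kernel2}, so only (a), simple-connectedness, and (c) remain outstanding.

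The linchpin for (a) and (c) is a short \emph{flower closure} observation: if $v\in V^*$ is an interior vertex of $K$, then the whole flower $B(v)$ lies in $K^*$, and in particular $v$ is interior in $K^*$. The reason is that, given an accessibility chain $(v,v_1,\ldots,v_n,v_\alpha)$ for $v$ (whose intermediate vertices are all interior in $K$), prepending any neighbor $u$ of $v$ produces the chain $(u,v,v_1,\ldots,v_n,v_\alpha)$, which has interior intermediates as well, so $u\in V^*$. From this the inclusion $\partial V^*\subset\partial V\cap V^*$ is immediate by contraposition, while the reverse inclusion is supplied by Lemma~\ref{lem.kernel1}, which produces two boundary edges of $K^*$ at every $v\in\partial V\cap V^*$. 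For (c), I would apply Lemma~\ref{lem.solid1} to $K^*$ itself with alpha-vertex $v_\alpha$: flower closure makes $v_\alpha$ interior in $K^*$, and any accessibility chain in $K$ is also an accessibility chain in $K^*$, because its intermediate vertices are interior in $K^*$ by the same closure. Thus every vertex of $K^*$ is accessible in $K^*$, and Lemma~\ref{lem.solid1} (now for $K^*$) yields strong connectedness.

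The main obstacle is the simple-connectedness required for (b). Lemma~\ref{lem.kernel2} only ensures that $K^*$ is a triangulated 2-manifold with boundary, while nonemptiness of $\partial V^*$ drops out of (a) combined with $\partial V\cap V^*\neq\emptyset$ from Lemma~\ref{lem.kernel1}. Since a compact triangulated 2-manifold embedded in the topological disk $K$ is itself a disk precisely when its boundary has only one connected component, it suffices to show that $\partial K^*$ is a single cycle. I would argue this by tracing $\partial K^*$ using the local structure given by Lemma~\ref{lem.kernel1}: at each $v\in\partial V\cap V^*$ the two boundary edges of $K^*$ go to the two extreme accessible boundary neighbors $w_1,w_2\in B(v)$ bounding the interior-vertex fan described there, and following these edges cyclically produces a walk whose global ordering mirrors the cyclic order on $\partial K$, with inaccessible stretches of $\partial K$ shortcut by chord edges of $K$. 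Verifying that this walk closes up into a single cycle (rather than splitting into several) is the delicate step, and is what I expect to be the main technical effort in the proof.
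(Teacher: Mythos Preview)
Your handling of (a) and (c) matches the paper's proof essentially verbatim: the flower-closure observation yields $\partial V^*\subset\partial V\cap V^*$, Lemma~\ref{lem.kernel1} supplies the reverse inclusion via the incomplete flower $B^*(v)$, and once $K^*$ is known to be a complex, Lemma~\ref{lem.solid1} applied to $K^*$ (with $v_\alpha$ interior in $K^*$ by flower closure) gives strong connectedness.

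The difference is in (b). You propose to trace $\partial K^*$ directly and argue that the walk inherits a single cyclic order from $\partial K$, flagging this as the delicate step. The paper sidesteps the tracing entirely with a two-line contradiction that exploits (a) as its engine: if $\partial K^*$ were disconnected, then (since $K^*$ is a connected triangulated surface sitting inside the disk $K$) some boundary cycle of $K^*$ would enclose a vertex $v$ lying on another boundary cycle. But in a combinatorial closed disk, any vertex enclosed by a closed chain of other vertices is necessarily an \emph{interior} vertex of $K$; yet $v\in\partial V^*=\partial V\cap V^*$ forces $v\in\partial V$, a contradiction. So the order in which you proved (a) first is exactly right---you simply did not cash it in for (b).

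Your tracing approach is not wrong in spirit, but the assertion that the $K^*$-boundary walk ``mirrors the cyclic order on $\partial K$'' is less immediate than you suggest: the two $K^*$-boundary neighbors $w_1,w_2$ of $v$ produced by Lemma~\ref{lem.kernel1} are neighbors of $v$ in $K$, but a boundary vertex of $K$ can have many boundary-vertex neighbors via interior edges, so ruling out that the walk jumps around $\partial K$ and closes into several cycles takes real work. The paper's enclosure argument avoids this altogether.
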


\begin{proof}
In order to prove $\partial V^*=\partial V\cap V^*$ let $v$ be an
accessible interior vertex of $K$. By definition of accessible vertices the flower
$B(v)\subset K$ around $v$ must also lie in $K^*$, hence $v$ is an interior vertex
of $K^*$, which implies $\partial V^*=\partial V\cap V^*$.

Since $K^*$ is a finite triangulation with nonempty boundary (Lemma~\ref{lem.kernel2}),
it is a complex (in our sense) if it is simply connected. Because every boundary
vertex of $K^*$ has exactly two other boundary vertices of $K^*$ as neighbors
(Lemma~\ref{lem.kernel1}), $K^*$ is simply connected if and only if the boundary
of $K^*$ is connected.

Assume that the boundary of $K^*$ is not connected. This implies that there is a
boundary vertex $v\in\partial V^*$, which is enclosed by a closed chain of boundary
vertices different from $v$ (see Figure~\ref{fig.Connected}, right). Because $K^*$
is a subset of $K$, the vertex $v$ must be enclosed by the boundary chain of $K$.
Hence $v$ is an interior vertex of $K$, a contradiction to
$\partial V^*=\partial V\cap V^*$.

Since $K^*$ is a complex whose vertices are all accessible, Lemma~\ref{lem.solid1}
tells us that $K^*$ is strongly connected.
\end{proof}

\noindent
{\bf Circle Packings}.\label{def.Pack.Pack}
A collection $\mathcal{P}$ of disks $D_v$ is said to be a \emph{circle packing}
for the complex $K=(V,E,F)$ if it satisfies the following conditions (i)--(iii):
\begin{itemize}
\itemsep0mm
\item[\rm{(i)}]
Each vertex $v\in V$ has an associated disk $D_v\in \mathcal{P}$, such that
$\mathcal{P}=\{D_v: v\in V\}$.
\item[\rm{(ii)}]
If $\langle u,v\rangle\in E$ is an edge of $K$, then the disks $D_u$ and $D_v$
touch each other.
\item[\rm{(iii)}]
If $\langle u,v,w,\rangle\in F$ is a positively oriented face of $K$, then the
centers of the disks $D_u, D_v, D_w$ form a positively oriented triangle in the
plane.
\end{itemize}
A circle packing is called \emph{univalent}, if its disks are \emph{non-overlapping},
$D_u \cap D_v=\emptyset$ for all $u,v\in V$ with $u \not= v$. In this paper all
circle packings are assumed to be univalent.

\smallskip\par

Since the structure of the underlying complex $K$ carries over to the associated
packing $\mathcal{P}$, all related attributes can be applied to the disks $D_v$
 as well -- so we shall speak of boundary disks, interior
disks, neighboring disks, etc.

\smallskip\par

\noindent
The \emph{contact point} of two neighboring disks $D_u,D_v$ is defined by
$c(u,v):=\overline{D}_u\cap\overline{D}_v$.
The \emph{contact points of a packing} $\mathcal{P}$ for the complex $K=(V,E,F)$ are
the points $c(u,v)$ with $e(u,v)\in E$.
\label{def.contact}
\smallskip\par

We denote by $D$ the union of all disks in $\mathcal{P}$, $D:=\bigcup_{v\in V} D_v$.
If $\mathcal{P}$ is univalent and $p$ and $q$ are different points of $\partial D$,
there is at most one disk $D_v$ whose boundary $\partial D_v$ contains $p$ and $q$. If such a
disk exists, we define $\delta(p,q)$ as the positively oriented open subarc of
$\partial D_v$ from $p$ to $q$, and $\delta[p,q]:=\overline{\delta(p,q)}$.
In addition we set $\delta(p,p):=\emptyset$ and $\delta[p,p]:=\{p\}$.
Note that $\delta(p,q)$ and $\delta[q,p]$ are complementary subarcs of
$\partial D_v$, provided that $p \not=q$.\label{def.arcs}

If $\langle u,v,w,\rangle$ is a face of $K$, the \emph{interstice} $I(u,v,w)$ of
$\mathcal{P}$ is the Jordan domain bounded by the arcs
$\delta_u:=\delta\big(c(u,v),c(u,w)\big)$, $\delta_v:=\delta\big(c(v,w),c(v,u)\big)$
and $\delta_w:=\delta\big(c(w,u),c(w,v)\big)$ (see Figure~\ref{fig.PackInter}, left).\label{def.interstice}
\begin{figure}[H]
\begin{center}
\begin{overpic}{Figure5a}
\fontsize{12pt}{14pt}\selectfont
\put(15,25){\makebox(0,0)[cc]{$D_u$}}
\put(85,25){\makebox(0,0)[cc]{$D_v$}}
\put(45,90){\makebox(0,0)[cc]{$D_w$}}
\put(37,45){\makebox(0,0)[rc]{$\delta_u$}}
\put(53,45){\makebox(0,0)[lc]{$\delta_v$}}
\put(45,72){\makebox(0,0)[cc]{$\delta_w$}}
\put(50,25){\makebox(0,0)[lc]{$c(v,u)$}}
\put(9,75){\makebox(0,0)[lc]{$c(w,u)$}}
\put(75,75){\makebox(0,0)[lc]{$c(v,w)$}}
\put(45,57){\makebox(0,0)[cc]{$I$}}
\end{overpic}
\hfill
\includegraphics{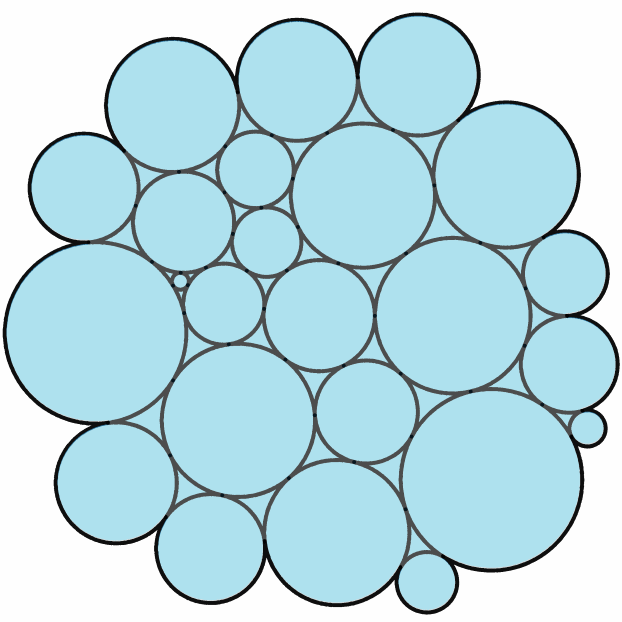}
\hfill
\includegraphics{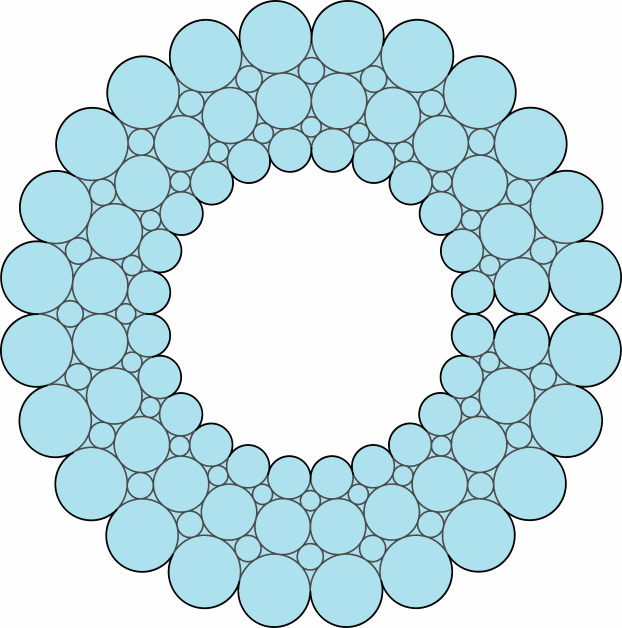}
\caption{Definition of the interstice $I:=I(u,v,w)$ and the carrier $D^*$ of two packings.}
\label{fig.PackInter}
\end{center}
\end{figure}

\noindent
Besides the union $D$ of all disks in a packing $\mathcal{P}$ we need the
\emph{carrier} of $\mathcal{P}$, which is the compact set
\[
D^*:=\overline{D} \cup \bigcup_{f(u,v,w)\in F} I(u,v,w)
\]
(see Figure~\ref{fig.PackInter}, middle and right).\label{def.carrier}
Note that this definition is somewhat different from Stephenson's
(cp.~\cite{SteBook} p.58). The carrier is essential in the next definition.

\begin{defn}\label{def.Pack.Fill}
Let $G$ be a bounded, simply connected domain.
We say that a (univalent) circle packing $\mathcal{P}$ is \emph{contained in} $G$
(or \emph{lies in} $G$) if the interior of $D^*$ is a subset of $G$.
A packing $\mathcal{P}$ contained in $G$ is said to \emph{fill} $G$ if every
boundary disk of $\mathcal{P}$ touches $\partial G$.
\end{defn}

If $G$ is a Jordan domain, $\mathcal{P}$ is contained in $G$ if and only if
any disk of $\mathcal{P}$ is a subset of $G$. For general domains the latter
condition alone would be too week, since then it could happen that ``spikes''
of $\partial G$ (think of $G$ as a slitted disk) penetrate into the packing,
sneaking through between two boundary disks at their contact point. This is prevented
by our definition; in particular it guarantees that $\partial G\cap I=\emptyset$
for every interstice $I$ of $\mathcal{P}$.

What happens when $\partial G$ meets a contact point of two boundary disks
is explored in the following lemma (an explanation of associated prime ends
is given on page~\pageref{page.asspe}).

\begin{lem}[]\label{lem.PrimeContact}
Let $G$ be a bounded, simply connected domain, and let $\mathcal{P}$ be a circle
packing contained in $G$. Then every contact point $c(u,v)\in\partial G$ is
associated with the same prime end by both $D_u$ and $D_v$.
\end{lem}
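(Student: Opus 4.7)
The strategy is to build a single Jordan arc $\gamma$ in $G$ that links both $D_u$ and $D_v$ to the contact point $c(u,v)$, and then exploit the fact that a prime end is determined by the terminal behaviour of an arc. Concretely $\gamma$ will start in $D_u$, pass through the (unique) interstice adjacent to the edge $e(u,v)$, then through $D_v$, and terminate at $c(u,v)$; the whole arc represents the prime end associated with $c(u,v)$ by $D_u$, while a terminal sub-arc starting in $D_v$ represents the prime end associated with $c(u,v)$ by $D_v$. As the two arcs share a common tail, they must define the same prime end, which is exactly the conclusion of the lemma.

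The first step is the structural observation that the hypothesis $c(u,v)\in\partial G$ forces $e(u,v)$ to be a boundary edge of $K$. If instead $e(u,v)$ were shared by two faces $f(u,v,w_1), f(u,v,w_2)$, then near $c(u,v)$ the four sets $\overline{D}_u$, $\overline{D}_v$, $\overline{I(u,v,w_1)}$, $\overline{I(u,v,w_2)}$ would cover a full open neighbourhood of $c(u,v)$ inside $D^*$, placing $c(u,v)$ in $\mathrm{int}(D^*)\subset G$ by Definition~\ref{def.Pack.Fill} and contradicting $c(u,v)\in\partial G$. Hence there is a unique face $f(u,v,w)\in F$ containing $e(u,v)$, and $I(u,v,w)$ borders $D_u$ along $\delta(c(u,v),c(u,w))$ and $D_v$ along $\delta(c(v,w),c(v,u))$.

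I would then define $\gamma$ as the concatenation of a simple arc in $\overline{D}_u$ from a chosen $p_u\in D_u$ to some $q_u\in\delta(c(u,v),c(u,w))$, a simple arc in $\overline{I(u,v,w)}$ from $q_u$ to some $q_v\in\delta(c(v,w),c(v,u))$, and a simple arc in $\overline{D}_v$ from $q_v$ to $c(u,v)$. With polygonal choices the three pieces are internally disjoint, so $\gamma$ is a Jordan arc; moreover every point of $\gamma$ except $c(u,v)$ has a full neighbourhood inside $\overline{D}_u\cup\overline{I(u,v,w)}\cup\overline{D}_v\subset D^*$, so $\gamma\setminus\{c(u,v)\}\subset\mathrm{int}(D^*)\subset G$.

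To conclude, $\gamma$ starts at $p_u\in D_u$ and ends at $c(u,v)\in\partial D_u\cap\partial G$, so by the definition on page~\pageref{page.asspe} it belongs to the equivalence class defining the prime end associated with $c(u,v)$ by $D_u$. The terminal sub-arc $\gamma'$ of $\gamma$ starting at any $p_v\in D_v$ on the third piece is a Jordan arc with initial point in $D_v$ and terminal point $c(u,v)\in\partial D_v\cap\partial G$, so it represents the prime end associated with $c(u,v)$ by $D_v$. Since $\gamma'$ is a terminal segment of $\gamma$, the two arcs define the same prime end by a standard property of prime end theory (equivalently, their preimages under a conformal map $g:G\to\mathbb{D}$ share a terminal segment and hence converge to the same point of $\mathbb{T}$). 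The only delicate point in the whole argument is the preliminary observation that $e(u,v)$ must be a boundary edge; everything else is a direct geometric construction combined with the standard fact that prime ends depend only on terminal behaviour.
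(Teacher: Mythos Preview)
Your construction of the arc $\gamma$ through the interstice is a natural idea, and the preliminary observation that $e(u,v)$ must be a boundary edge is correct (though the paper's proof does not actually need it---it just picks any face $f(u,v,w)$). However, there is a genuine gap in the crucial step.

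You claim that $\gamma$ represents the prime end associated with $c:=c(u,v)$ by $D_u$ solely because $\gamma$ \emph{starts} in $D_u$, citing the paper's definition on page~\pageref{page.asspe}. But that sentence of the paper, read literally, is not true: take $G=\mathbb{D}\setminus[0,1]$, $p=\tfrac12$, and a small disk $D\subset G$ tangent to the slit at $p$ from above; an arc starting in $D$ can loop around the tip of the slit and approach $p$ from below, landing at the \emph{other} prime end. The intended meaning of ``the prime end associated with $p$ by $D$'' is the class of arcs whose \emph{terminal} portion lies in $D$. Your arc $\gamma$ approaches $c$ through $D_v$, not through $D_u$; so asserting that $\gamma$ lies in $D_u$'s equivalence class is exactly the statement of the lemma, and the argument becomes circular. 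Your own closing remark that ``prime ends depend only on terminal behaviour'' in fact undercuts the step you need.

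The paper closes this gap differently: it exhibits a small Jordan domain $\widetilde{B}_\varepsilon\setminus\{c\}\subset G$, built from $D_u\cup D_v\cup\overline{I(u,v,w)}$ intersected with a disk of radius~$\varepsilon$ about $c$, which contains $D_u\cap B_\varepsilon$ and $D_v\cap B_\varepsilon$. Since a Jordan domain has a single prime end at each boundary point, any approach to $c$ through $D_u$ and any approach through $D_v$ both factor through the unique prime end of $\widetilde{B}_\varepsilon\setminus\{c\}$ at $c$, hence coincide as prime ends of $G$. Your route could be salvaged by an analogous argument---observing that $\gamma$ and a radial segment $[p_u,c]\subset D_u$ both lie in the Jordan subdomain $\mathrm{int}\big(D_u\cup\overline{I}\cup D_v\big)\subset G$, which has a single prime end at $c$---but that is precisely the ingredient currently missing.
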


\begin{proof}
Let $c=c(u,v)$ be a contact point of $\mathcal{P}$ which lies on the boundary of $G$.
Then there exists a vertex $w\in V$ such that $f(u,v,w)$ is a face in the
complex of $\mathcal{P}$, and we denote by $I=I(u,v,w)$ the corresponding interstice.

For $\varepsilon>0$, let $B_\varepsilon$ be an open disk centered at $c$ with radius
$\varepsilon$ and define
\[
\widetilde{B}_\varepsilon := B_\varepsilon\cap\big(D_u\cup D_v\cup \overline{I}\big).
\]
If $\varepsilon$ is sufficiently small, $\widetilde{B_\varepsilon}\setminus \{c\}$
is a Jordan domain contained in $G$, and we have
$D_u \cap B_\varepsilon \subset \widetilde{B}_\varepsilon$,
$D_v \cap B_\varepsilon \subset \widetilde{B}_\varepsilon$
(see Figure~\ref{fig.BndContact}, left). As a Jordan domain
$\widetilde{B_\varepsilon}\setminus \{c\}$
has a unique prime end $c^*$ corresponding to its boundary point $c$, so the prime ends
of $G$ associated with $c$ by the disks $D_u$ and $D_v$, respectively, must
coincide.
\end{proof}

\begin{figure}[H]
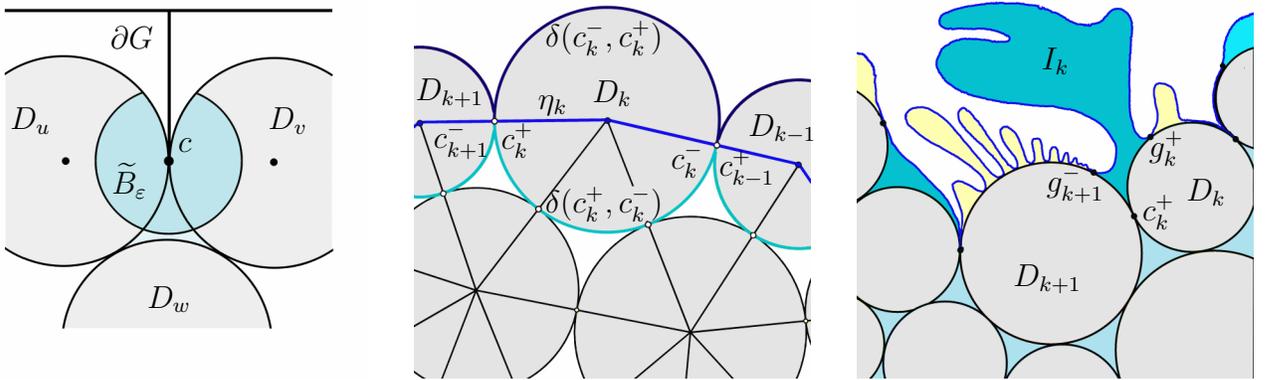

\begin{center}
\begin{overpic}{Figure6a}
\fontsize{12pt}{14pt}\selectfont
\put(46,86){\makebox(0,0)[rc]{$\partial G$}}
\put(54,59){\makebox(0,0)[cc]{$c$}}
\put(80,65){\makebox(0,0)[cc]{$D_v$}}
\put(15,65){\makebox(0,0)[cc]{$D_u$}}
\put(50,20){\makebox(0,0)[cc]{$D_w$}}
\put(40,50){\makebox(0,0)[cc]{$\widetilde{B}_\varepsilon$}}
\end{overpic}
\hfill
\begin{overpic}{Figure6b}
\fontsize{12pt}{14pt}\selectfont
\put(93,63){\makebox(0,0)[cc]{$D_{k-1}$}}
\put(77,53){\makebox(0,0)[lc]{$c_{k-1}^+$}}
\put(73,55){\makebox(0,0)[rc]{$c_{k}^-$}}
\put(22,60){\makebox(0,0)[lc]{$c_{k}^+$}}
\put(50,72){\makebox(0,0)[cc]{$D_{k}$}}
\put(12,60){\makebox(0,0)[cc]{$c_{k+1}^-$}}
\put(9,72){\makebox(0,0)[cc]{$D_{k+1}$}}
\put(48,86){\makebox(0,0)[cc]{$\delta(c_k^-,c_k^+)$}}
\put(48,44){\makebox(0,0)[cc]{$\delta(c_k^+,c_k^-)$}}
\put(35,69){\makebox(0,0)[cc]{$\eta_{k}$}}
\end{overpic}
\hfill
\begin{overpic}{Figure6c}
\fontsize{12pt}{14pt}\selectfont
\put(48,25){\makebox(0,0)[cc]{$D_{k+1}$}}
\put(88,47){\makebox(0,0)[cc]{$D_k$}}
\put(50,80){\makebox(0,0)[cc]{$I_k$}}
\put(76,42){\makebox(0,0)[cc]{$c_k^+$}}
\put(78,58){\makebox(0,0)[cc]{$g_k^+$}}
\put(55,49){\makebox(0,0)[cc]{$g_{k+1}^-$}}
\end{overpic}
\caption{Definitions of $\widetilde{B}_\varepsilon$, boundary arcs and boundary interstices}
\label{fig.BndContact}
\end{center}
\end{figure}
\noindent

A packing which fills the unit disk $\mathbb{D}$ is called \emph{maximal}.
A celebrated result, the Koebe-Andreev-Thurston-Theorem (which can be traced back
to Koebe's paper~\cite{Koebe}), tells us that any complex $K$ has an associated
maximal packing, which is unique up to conformal automorphisms of $\mathbb{D}$.
A far reaching generalization is the Uniformization Theorem of Beardon and
Stephenson (\cite{BeaSte}, see also Chapter~II in~\cite{SteBook}).
\smallskip\par

Recall that the boundary disks of a packing form a chain $D_1,\ldots,D_m$.
Since this is a cyclic structure, we label it modulo $m$, in particular
$D_0:=D_m$ and $D_{m+1}:=D_1$. For $k\in\{1,\ldots,m\}$, we denote by
$\eta_k$ the closed segment which connects the centers of $D_{k}$ and $D_{k+1}$.
These \emph{boundary segments} form a (polygonal) Jordan curve $\eta$.
\label{def.eta}

If $D_{k-1},D_k$ and $D_{k+1}$ are three consecutive boundary disks,
the contact points $c_k^-:=\overline{D}_{k-1}\cap\overline{D}_k$
and $c_k^+:=\overline{D}_{k}\cap\overline{D}_{k+1}$ split $\partial D_k$ into
two arcs. We call $\delta(c_k^-,c_k^+)$ the \emph{exterior boundary arc} and
$\delta(c_k^+,c_k^-)$ the \emph{interior boundary arc} of $D_k$, respectively
(see Figure~\ref{fig.BndContact}, middle).\label{def.arcs2}

\noindent

\begin{lem}[]\label{lem.Pack.BndArc}
Let $D_k$ be a boundary disk of a circle packing $\mathcal{P}$. Then the exterior
boundary arc of $D_k$ contains no contact points of disks in $\mathcal{P}$.
\end{lem}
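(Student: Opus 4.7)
The plan is to reduce the assertion to a combinatorial statement about the cyclic ordering of edges at the vertex $v_k$ of $K$ associated with $D_k$, and then to invoke condition~(iii) from the definition of a circle packing. First, observe that any contact point of $\mathcal{P}$ on $\partial D_k$ must equal $c(v_k,u) = \overline{D}_k \cap \overline{D}_u$ for some neighbor $u$ of $v_k$ in $K$, and by univalence these contact points are pairwise distinct; so it suffices to locate the family $\{c(v_k,u) : e(v_k,u) \in E\}$ on $\partial D_k$.

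The main tool will be a cyclic-order-preserving correspondence between $E(v_k)$ and the contact points on $\partial D_k$. If two edges $e(v_k,u)$ and $e(v_k,w)$ are consecutive in the cyclic order at $v_k$, they share a face $f(v_k,u,w) \in F$; by condition~(iii) the centers of $D_k, D_u, D_w$ form a positively oriented triangle, from which a short geometric check shows that the contact points $c(v_k,u)$ and $c(v_k,w)$ are consecutive on $\partial D_k$ in the counterclockwise direction. Iterating over the faces at $v_k$, the cyclic counterclockwise order of contact points on $\partial D_k$ agrees with the cyclic counterclockwise order of the edges in $E(v_k)$ at $v_k$.

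Finally I would exploit the incomplete-flower structure at the boundary vertex $v_k$: the edges of $E(v_k)$ form an ordered chain rather than a full cycle, with the two boundary edges $e(v_k,v_{k-1})$ and $e(v_k,v_{k+1})$ (where $v_{k-1}, v_{k+1}$ denote the vertices of $D_{k-1}, D_{k+1}$) at its two ends and all remaining edges strictly between them. By the orientation convention that enumerates the boundary disks $D_1,\dots,D_m$ consistently with the orientation of $\partial K$ (interior of $K$ to the left), the gap of this incomplete flower --- i.e., the cyclic sector at $v_k$ to which no face of $K$ is attached --- lies on the counterclockwise side from $e(v_k,v_{k-1})$ to $e(v_k,v_{k+1})$. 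Translated via the correspondence of the previous paragraph, the counterclockwise open arc on $\partial D_k$ from $c_k^-$ to $c_k^+$, which is by definition the exterior boundary arc $\delta(c_k^-,c_k^+)$, contains no contact points of $\mathcal{P}$, as claimed. The main obstacle will be precisely this orientation bookkeeping: verifying that the flower's gap lies on the counterclockwise and not the clockwise side of the two boundary edges, which amounts to aligning the combinatorial orientation of $\partial K$ with the geometric counterclockwise orientation of $\partial D_k$.
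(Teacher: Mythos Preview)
Your approach is correct but differs from the paper's. The paper gives a short global argument: the polygonal Jordan curve $\eta$ through the centers of consecutive boundary disks separates each exterior boundary arc (which lies outside $\eta$) from the interior boundary arcs and from the closures of all interior disks (which lie inside $\eta$). Since every contact point of $\mathcal{P}$ is either one of the points $c_k^\pm$ between two consecutive boundary disks or lies on the boundary of an interior disk, no contact point can land on an open exterior arc. This is essentially a two-line Jordan-curve separation argument with no local orientation bookkeeping.

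Your argument is instead local at $v_k$: you match the cyclic order of $E(v_k)$ with that of the contact points on $\partial D_k$ via condition~(iii), and then read off that the gap in the incomplete flower corresponds to the exterior arc. This works and gives a more detailed picture of how the neighbors of $D_k$ are arranged around $\partial D_k$, but it costs exactly the orientation check you flag as ``the main obstacle'', and it also tacitly uses that a contact point $c(u,w)$ with $u,w \neq v_k$ cannot lie on $\partial D_k$ --- true by univalence (three pairwise non-overlapping open disks cannot share a common boundary point), but worth stating explicitly. The paper's global route via $\eta$ sidesteps both of these issues.
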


\begin{proof}
The polygonal line $\eta$ which connects consecutive centers of the boundary disks
is a Jordan curve which separates the exterior boundary arcs from the interior boundary
arcs. The interior of $\eta$ contains the closures $\overline{D}_v$ of all interior
disks.
Any contact point $c$ of $\mathcal{P}$ is either a contact point of two boundary circles,
or it lies on the boundary of an interior disk. In both cases $c$ does not belong to
any exterior boundary arc.
\end{proof}

To provide some more notation, let $\mathcal{P}$ be a circle packing which fills
a bounded, simply connected domain $G$. By definition, every boundary disk $D_k$
touches $\partial G$ in a non-void (possibly uncountable) set $G_k$ of points,
and $G_k$ must be contained in the closure
$\delta[c_k^-,c_k^+]$ of the exterior boundary arc $\delta(c_k^-,c_k^+)$
of $D_k$. Let $\delta_k:=\delta[g_k^-,g_k^+]$ be the smallest subarc (we admit
that this `arc' degenerates to a point) of $\delta[c_k^-,c_k^+]$ which contains
$G_k$. Since $G_k$ is a closed set, we have $g_k^-,g_k^+\in G_k$.\label{def.delta}
\smallskip\par

In order to define the \emph{boundary interstice} \label{page.boundary.interstice}
$I_k$ between two consecutive boundary disks $D_k$ and $D_{k+1}$
(see Figure~\ref{fig.BndContact}, right) we distinguish two cases.
If $g_k^+=c_k^+$, we set $I_k:=\emptyset$. Otherwise we let $\delta$ be the union
of the arcs $\delta(g_k^+,c_k^+]$ (a subarc of $\partial D_k$) and
$\delta[c_k^+,g_{k+1}^-)$ (a subarc of $\partial D_{k+1}$). The open Jordan arc
$\delta$ is contained in $G$ with different endpoints on $\partial G$, hence it
is a crosscut. The set $G\setminus\delta$ consists of two simply connected components
$G_1$ and $G_2$. One of these components contains all disks of $\mathcal{P}$,
the other one is (by definition) the boundary interstice $I_k$.

\begin{lem}[]\label{lem.Interstice}
$I_k\cap\mathcal{D}=\emptyset$ for all $k=1,...,m$.
\end{lem}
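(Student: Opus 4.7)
\emph{Plan.} If $g_k^+=c_k^+$ then $I_k=\emptyset$ by definition and there is nothing to prove, so assume $g_k^+\neq c_k^+$ and let $G_1,G_2$ be the two components of $G\setminus\delta$. My strategy is: (i) show $\delta$ avoids every open disk $D_v$, so each $D_v$ sits inside one of the two components; (ii) show all of these disks lie in the \emph{same} component; then (iii) conclude, since $I_k$ is by definition the component not containing the disks.

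For (i), $\delta\subset\partial D_k\cup\partial D_{k+1}$ by construction. If $v\notin\{k,k+1\}$, univalence yields $D_v\cap D_k=\emptyset$; as $D_v$ is open, any point of $D_v\cap\partial D_k$ would force interior points of $D_k$ into $D_v$, which is impossible. The identical argument handles $\partial D_{k+1}$. For $v=k$, the open disk $D_k$ is disjoint from its own bounding circle $\partial D_k$ and, by the previous argument with the roles swapped, from $\partial D_{k+1}$; the case $v=k+1$ is symmetric. Since $D_v\subset\mathrm{int}(D^*)\subset G$ and $D_v$ is connected, $D_v$ lies in exactly one of $G_1,G_2$.

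For (ii), let $C$ denote the set of contact points of $\mathcal{P}$ and consider $D\cup C$. This set is connected, because the $1$-skeleton of $K$ is connected and any two tangent disks together with their shared contact point form a connected piece. By Lemma~\ref{lem.Pack.BndArc}, the arcs making up $\delta$ lie in the exterior boundary arcs of $D_k$ and $D_{k+1}$, so the only element of $C$ hit by $\delta$ is $c_k^+=c(k,k+1)$. Now since $e(k,k+1)$ is a boundary edge of the triangulation $K$, it belongs to a unique face $f(k,k+1,w)$, and the detour $D_k\to D_w\to D_{k+1}$ through the contact points $c(k,w),c(w,k+1)$ — both lying on \emph{interior} boundary arcs and hence off $\delta$ — provides an alternative connection between the two consecutive boundary disks. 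Consequently $D\cup(C\setminus\{c_k^+\})$ is connected and disjoint from $\delta$, and therefore lies in a single component of $G\setminus\delta$. By definition, that component is not $I_k$, which yields (iii).

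The delicate step is (ii): a priori one might fear that the crosscut $\delta$, which pinches off the tangency $c_k^+$ between $D_k$ and $D_{k+1}$, could strand one of these two disks inside $I_k$. The triangulation structure — specifically, the fact that every boundary edge belongs to a face — supplies the by-pass through a third disk $D_w$ that makes the connectivity argument go through.
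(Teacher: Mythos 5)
Your overall strategy coincides with the paper's: show that the disks, glued together by suitable connecting tissue, form a connected set avoiding $\delta$, so that they all land in one component of $G\setminus\delta$, the other component being $I_k$ by definition. Your step (i) is correct, and your accounting of which contact points can lie on $\delta$ is actually more careful than the paper's own wording (the paper asserts that $\delta$ carries no contact points, whereas $c_k^+\in\delta$ is a contact point; your detour through the third vertex $w$ of the face $f(k,k+1,w)$ is exactly the right way to deal with it).

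There is, however, a genuine gap in the final inference ``$D\cup(C\setminus\{c_k^+\})$ is connected and disjoint from $\delta$, therefore it lies in a single component of $G\setminus\delta$.'' That implication requires the set to be contained in $G$. Each open disk does satisfy $D_v\subset\mathrm{int}(D^*)\subset G$, and so does every contact point of an \emph{interior} edge, but a contact point of a \emph{boundary} edge may lie on $\partial G$ --- this is precisely the situation addressed by Lemma~\ref{lem.PrimeContact}. Since $G$ is only assumed to be simply connected, the two components $G_1,G_2$ of $G\setminus\delta$ may share boundary points on $\partial G$ other than the endpoints of $\delta$ (picture a slit domain), so a connected set that passes through such a point can a priori meet both $G_1$ and $G_2$; connectedness alone does not pin it to one component. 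To close the gap you must show that two disks tangent at a point of $\partial G$ still lie on the same side of $\delta$. The paper does this by using the interstice $I(u,v,w)$ adjacent to the edge as the glue instead of the contact point: $I$ is open, connected, disjoint from $\partial G$ (by the carrier condition in Definition~\ref{def.Pack.Fill}) and from $\delta$, hence lies in one component, and the arcs $\partial D_u\cap\partial I$ and $\partial D_v\cap\partial I$ (minus endpoints) are accumulated by both $I$ and the respective disks without lying on $\partial G$, which forces $D_u$ and $D_v$ into the same component. Rerouting your connectivity argument through interstices in this way (your face $f(k,k+1,w)$ already supplies the one interstice needed to bypass $c_k^+$) repairs the proof.
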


\begin{proof}
Let $k\in\{1,...,m\}$ be fixed. If $I_k=\emptyset$ the assertion is trivially fulfilled.
Let $I_k\neq\emptyset$ and let $\delta$ be the crosscut defined above, so that
$G\setminus\delta$ consists of exactly two simply connected domains $G_1=I_k$ and
$G_2$.

Clearly every disk of $\mathcal{P}$ is contained either in $G_1$ or $G_2$. We
assume that there is a disk $D_u$ in $G_1$ (remember $D_k\subset G_2$). Because
$K$ is connected there is a chain $C$ of vertices $\{u,...,v\}$,
where $v$ is the vertex associated with $D_k$.
Because $D_u\subset G_1$ and $D_k\subset G_2$ there have to be two consecutive
vertices $w_1,w_2$ in $C$, so that $D_{w_1}$ is contained in $G_1$ and $D_{w_2}$ in
$G_2$. The contact point $c(w_1,w_2)$ must lie on $\partial G_1\setminus\delta$,
because there are no contact points of $\mathcal{P}$ on $\delta$ according to
Lemma~\ref{lem.Pack.BndArc}.

Let $w_3$ be a vertex, so that $f(w_1,w_2,w_3)$ is a face of $K$. The interstice
$I:=I(w_1,w_2,w_3)$ is contained either in $G_1$ or $G_2$, because it is disjoint
to $\partial G$. Moreover both arcs
$\partial D_{w_1}\cap\partial I$ and $\partial D_{w_2}\cap\partial I$ (up to their
endpoints) lie in the same domain as $I$, without being contained in the
boundary of $G$. This implies, that both disks $D_{w_1}$ and $D_{w_2}$ are contained
either in $G_1$ or $G_2$, a contradiction. Hence, $I_k\cap\mathcal{D}=\emptyset$ for
all $k=1,...,m$.
\end{proof}

Last but not least we state a result about glueing simply connected domains along
a common boundary arc. The proof is left as an exercise (see~\cite{PomBook}).

\begin{lem}[]\label{lem.Geo.Glue}
Let $G_1$ and $G_2$ be simply connected domains with locally connected
boundaries. If $G_1$ and $G_2$ touch each other along a Jordan arc $J$ with
endpoints $a,b$, i.e., $G_1 \cap G_2=\emptyset$ and
$\overline{G}_1\cap\overline{G}_2=J$, then $\big(G_1\cup J \cup G_2\big)
\setminus \{a,b\}$ is a simply connected domain and its boundary is locally
connected.
\end{lem}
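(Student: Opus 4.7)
The plan is to verify four properties of the set $U := (G_1 \cup J \cup G_2) \setminus \{a,b\}$ in turn: openness, connectedness, simple connectedness, and local connectedness of the boundary. Openness at interior points of $J$ is the geometric heart of the matter; once that is in place the remaining three properties follow by fairly standard arguments.

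\textbf{Openness.} For points in $G_1$ or $G_2$ openness is immediate. For $p\in J\setminus\{a,b\}$, I would choose an open disk $B$ around $p$ small enough to avoid $\{a,b\}$, so that $J\cap B$ is a Jordan sub-arc splitting $B$ into two components $B^+$ and $B^-$. Since $\overline{G_1}\cap\overline{G_2}=J$, neither component can meet both $G_1$ and $G_2$, and since $p$ lies on both $\partial G_1$ and $\partial G_2$ each component must meet at least one of them; so up to relabeling $B^+$ meets $G_1$ and $B^-$ meets $G_2$. The remaining task is to rule out that $B^\pm$ contains points outside $\overline{G_1}\cup\overline{G_2}$; here the local connectedness of $\partial G_1$ (respectively $\partial G_2$) at $p$ is exactly what is needed, since a ``gap'' in $B^+$ would force $\partial G_1$ to have a component near $p$ separated from the piece of $\partial G_1$ containing $J$, contradicting local connectedness. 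This yields $B^+\subset G_1$ and $B^-\subset G_2$, hence $B\subset U$.

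\textbf{Connectedness and simple connectedness.} Since each $\partial G_i$ is locally connected, the Riemann map of $G_i$ extends continuously to $\overline{\mathbb{D}}$, so any $q\in G_i$ can be joined by an arc in $G_i\cup J$ to some point of $J\setminus\{a,b\}$. Together with the connectedness of $J\setminus\{a,b\}$ this shows $U$ is path-connected. For simple connectedness I would invoke van Kampen: the openness argument above provides an open ``tube'' $N\subset U$ about $J\setminus\{a,b\}$ homeomorphic to an open strip, and the open sets $U_1:=G_1\cup N$ and $U_2:=G_2\cup N$ each deformation-retract onto $G_i$ (hence are simply connected), their intersection is $N$ (simply connected), and their union is $U$.

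\textbf{Local connectedness of $\partial U$.} A short verification gives $\partial U=(\partial G_1\setminus J)\cup(\partial G_2\setminus J)\cup\{a,b\}$. At points of $\partial G_i\setminus J$ local connectedness is inherited from $\partial G_i$ via the openness step (small neighborhoods in $\mathbb{C}$ do not see the other boundary). At $a$, and symmetrically $b$, one combines local connectedness of both $\partial G_1$ and $\partial G_2$ at that common endpoint to produce arbitrarily small connected neighborhoods of $a$ in $\partial U$, using that the two boundary arcs meet there.

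The main obstacle is the openness step: local connectedness of the two boundaries must be used precisely to prevent external ``spikes'' of the complement from wedging into interior points of $J$. Once that local picture is secured, the van Kampen argument and the boundary check are essentially routine.
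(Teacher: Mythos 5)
The paper offers no proof to compare with: it explicitly leaves this lemma as an exercise. Judged on its own, your outline correctly identifies openness at the interior points of $J$ as the crux, but the justification you give there does not work, and in fact cannot be made to work from the stated hypotheses alone. Local connectedness of $\partial G_1$ does \emph{not} prevent $\partial G_1\setminus J$ from accumulating at an interior point $p$ of $J$: the offending piece of boundary can be \emph{attached} to $J$ at $p$, in which case it is not ``separated from the piece of $\partial G_1$ containing $J$'' and no contradiction with local connectedness arises. Concretely, take $G_2=\{z\in\mathbb{D}:\operatorname{Im}z<0\}$, $G_1=\{z\in\mathbb{D}:\operatorname{Im}z>0\}\setminus\{iy:0<y\le \tfrac12\}$ and $J=[-1,1]$. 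Both domains are simply connected with locally connected (Peano) boundaries, $G_1\cap G_2=\emptyset$ and $\overline{G}_1\cap\overline{G}_2=J$, yet $\big(G_1\cup J\cup G_2\big)\setminus\{-1,1\}$ contains $0$ but no neighborhood of $0$, so it is not even open. Thus the lemma as literally stated is false, and your openness step is exactly where any proof must break down.

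What is true, and what is actually needed for Lemma~\ref{lem.PropGLM}, is the version in which $J\setminus\{a,b\}$ is a \emph{free} boundary arc of both domains, i.e.\ every interior point of $J$ has a neighborhood $B$ with $B\cap(\partial G_1\cup\partial G_2)\subset J$; this holds in the application because the gluing arcs are open straight segments interior to $G$ with $G_1$ and $G_2$ occupying the two sides. Under that added hypothesis your argument goes through, with two small repairs: a round disk $B$ need not meet $J$ in a single crosscut (a Jordan arc can oscillate in and out of every disk about $p$, so use a Schoenflies neighborhood of $p$ instead), and the claim that neither component of $B\setminus J$ meets both $G_1$ and $G_2$ should be derived \emph{after} the no-gap property, not before: a path in $B^+$ from $G_1$ to $G_2$ forces a point of $\overline{G}_1\cap\overline{G}_2=J$ on it only once you know the path stays in $\overline{G}_1\cup\overline{G}_2$. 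The connectedness, van Kampen, and boundary steps are then routine as you describe.
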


\section{Crosscuts} \label{sec.Cuts}

Before we introduce
crosscuts of a (univalent) circle packing which fills a domain $G$, we define
crosscuts of its complex.

\begin{defn}\label{def.Cuts.Cross}
A (combinatoric) \emph{crosscut of a complex} $K$ is a sequence
$L=(e_0,e_1,\ldots,e_l)$ of edges in $K$ with the following properties (i)--(iv):
\begin{itemize}
\itemsep0mm
\item[{\rm(i)}]
The edges are pairwise different, if $0\le j < k\le l$ then $e_j\not=e_k$.
\item[{\rm(ii)}]
For $1\le j\le l$ the edges $e_{j-1}$ and $e_j$ are adjacent to a common face of $K$.
\item[{\rm(iii)}]
Three consecutive edges are not adjacent to the same face of $K$.
\item[{\rm(iv)}]
The edges $e_0$ and $e_l$ are boundary edges.
\end{itemize}
\end{defn}

\begin{figure}[H]
\begin{center}
\begin{overpic}{Figure7a}
\fontsize{12pt}{14pt}\selectfont
\put(10,5){\makebox(0,0)[cc]{$L$}}
\put(90,15){\makebox(0,0)[cc]{$K_L^-$}}
\put(5,90){\makebox(0,0)[cc]{$K_L^+$}}
\end{overpic}
\hspace{0.1\textwidth}
\includegraphics{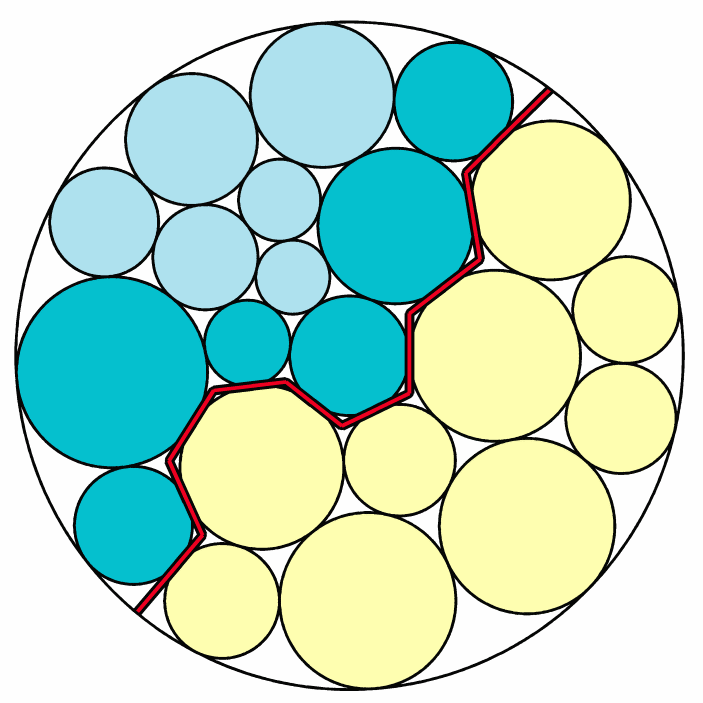}
\caption{A crosscut $L$ of $K$, the vertex sets $V_L^-$, $V_L^+$, $U_L^+$,
and a corresponding packing}
\label{fig.CrossCut}
\end{center}
\end{figure}
\noindent

It is easy to see that only the first and the last edge of a crosscut can be
boundary edges of $K$. Because $e_0\neq e_l$ we have $l\geq 1$.
When one edge of a face $f$ belongs to $L$, then $L$
must contain exactly two edges of $f$, and these are subsequent members
of $L$. So a crosscut can also be represented by a sequence $(f_1,\ldots,f_l)$
of faces,
where $e_{j-1}$ and $e_j$ are adjacent to $f_j$. Since the three edges of
a face are not allowed to be consecutive members of $L$, all faces $f_j$ must
be pairwise different.

After removing the edges of a crosscut $L$ from $K$, the remaining graph consists
of two edge-connected components $K_L^-$ and $K_L^+$.
We assume that $K_L^-$ `lies to the right' and $K_L^+$ `lies to the left',
respectively, when we move along the edges $e_0,e_1,\ldots,e_l$ in this order.
The vertex sets of $K_L^-$ and $K_L^+$ are denoted by $V_L^-$ and $V_L^+$,
respectively, and we call them the \emph{lower} and the \emph{upper vertices}
of $K$ with respect to $L$.\label{def.lower.upper}
The set $U_L^+$ is constituted by all vertices $v$ in $V_L^+$ which are
adjacent to an edge in $L$. These vertices and the corresponding disks
are said to be the \emph{upper neighbors} of $L$.
A corresponding definition is made for the set $U_L^-$ of \emph{lower neighbors}
of $L$ (see Figure~\ref{fig.CrossCut}).
\smallskip\par

Given a (combinatoric) crosscut $L$ of a complex $K$ and a circle packing
$\mathcal{P}$ for $K$ which fills a domain $G$, we define several related
(geometric) crosscuts $J$ of $\mathcal{P}$ in $G$.
To begin with, we associate with every edge $e_j=e(u,v)$ in $L$ the
contact point $x_j:= \overline{D}_u \cap \overline{D}_v$ of the disks
$D_u,D_v\in \mathcal{P}$. The common tangent to $D_u$ and $D_v$ at $x_j$ is
denoted $\tau_j$.
The set $X:=\{x_0,\ldots,x_l\}$ of all contact points associated with edges
of $L$ has a natural
ordering, induced by the ordering of edges in the crosscut. Since
the indexing of the elements fits with this ordering, we write $x_j<x_k$ if $j<k$.
\label{def.X}
\smallskip\par

The \emph{polygonal crosscut} $J_L^0$ is build from the common tangents $\tau_i$
of circles at their contact points $x_i$ as follows.\label{def.PolyCrosscut}
Let $i\in\{1,\ldots,l\}$
and assume that $x_{i-1}$ and $x_i$ are consecutive contact points of the
pairs $D_u,D_v$ and $D_v,D_w$, respectively. Then the three circles
$\partial D_u, \partial D_v, \partial D_w$ bound an interstice $I:=I(u,v,w)$.
The tangents $\tau_{i-1}$ and $\tau_{i}$ intersect each other at a point $s_i$ in
$I$, and the union of the closed segments $[s_{i},s_{i+1}]$ for $i=1,\ldots,l-1$
is a Jordan arc in $G$ (see Figure~\ref{fig.Pack.CrossPoly}).

\begin{figure}[H]
\begin{center}
\begin{overpic}{Figure8a}
\fontsize{12pt}{14pt}\selectfont
\put(15,25){\makebox(0,0)[cc]{$D_u$}}
\put(90,25){\makebox(0,0)[cc]{$D_v$}}
\put(50,90){\makebox(0,0)[cc]{$D_w$}}
\put(31,72){\makebox(0,0)[cc]{$t_k$}}
\put(62,26){\makebox(0,0)[lt]{$x_{i-1}$}}
\put(82,73){\makebox(0,0)[lt]{$x_{i}$}}
\put(52,54){\makebox(0,0)[cc]{$s_i$}}
\end{overpic}
\hspace{0.1\textwidth}
\includegraphics{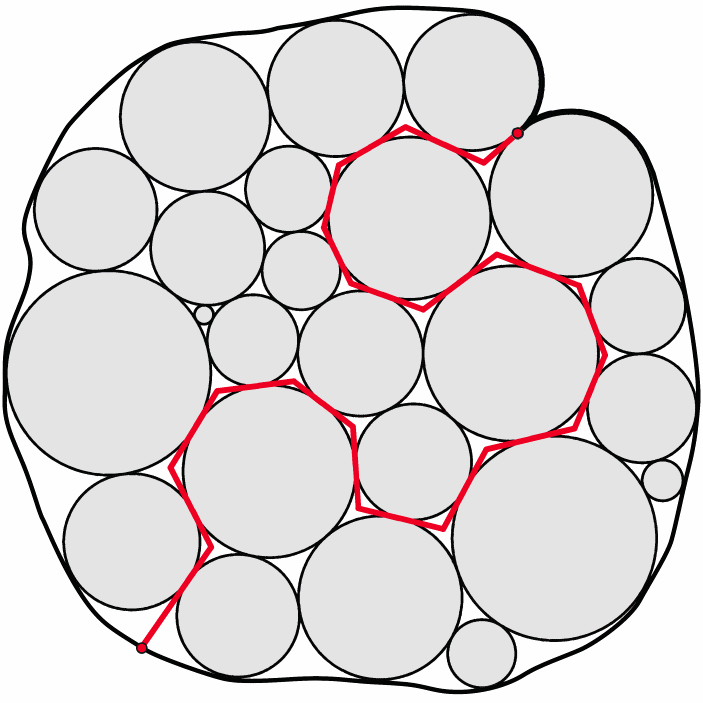}
\caption{Local construction and global view of a polygonal crosscut}
\label{fig.Pack.CrossPoly}
\end{center}
\end{figure}
\noindent

In order to complete this arc to a crosscut in $G$ we look at the boundary disks
$D_k$ and $D_{k+1}$ which touch each other at $x_0$. If $x_0$ is not a boundary point
of $G$ we define $s_0$ as the endpoint of the largest segment $(x_0,s_0)$ on the
tangent $\tau_0$ which is contained in $I_k$.
Since there is no disk of $\mathcal{P}$ intersecting $I_k$ (Lemma~\ref{lem.Interstice})
we see that $[x_0,s_0)\subset G$ is disjoint to $\mathcal{P}$ and $s_0\in\partial G$.
If $x_0$ is a boundary point of $G$ we set $s_0:=x_0$.

A similar construction is made for the point $s_{l+1}$ as (``the first'') intersection
point of the tangent $\tau_l$ with $\partial G$. Here $x_0\neq x_{l}$ ensures that
$[s_0,s_1]$ and $[s_l,s_{l+1}]$ live in two different boundary interstices.
Although this does not exclude $s_0=s_{l+1}$, it guarantees that $s_0$ and $s_{l+1}$
are endpoints of the segments $[s_1,s_0]$ and $[s_l,s_{l+1}]$, belonging to
\emph{different prime ends} $s_0^*$ and $s_{l+1}^*$, respectively.

Finally, the union of the closed segments $[s_{k},s_{k+1}]$ for $k=0,\ldots,l$
forms the desired polygonal crosscut $J_L^0:=\bigcup_{k=0}^l [s_{k},s_{k+1}]$ in $G$.
It can easily be verified that $J_L^0$ is a (topologically closed) Jordan arc which
meets $\overline{D}$ at the contact points $x_k$ -- more precisely we have
$X\subset J_L^0\cap\overline{D}\subset X\cup\{s_0,s_{l+1}\}$.
The open set $G\setminus J_L^0$ has two simply connected components $G^+_0$ and $G^-_0$,
containing the disks associated with $V^+_L$ and $V^-_L$, respectively.
\smallskip\par

It is clear that, for a fixed combinatorial crosscut $L$ of $K$, the statement of
Theorem~\ref{thm.CircRigid} depends on the choice of the geometric crosscut $J$:
the assertion becomes the stronger, the larger the domain $G_J^-$ is.
Unfortunately, there exists (in general) no crosscut $J$ which maximizes $G_J^-$,
since the boundary of the largest domain $G_J^-$ need not be a Jordan curve.
We therefore extend the concept of crosscuts somewhat, defining the
\emph{maximal crosscut} $J_L^+$ in $\mathcal{P}$ as follows.\label{def.MaxCrosscut}

\begin{figure}[H]
\begin{center}
\begin{overpic}{Figure9a}
\fontsize{12pt}{14pt}\selectfont
\put(15,25){\makebox(0,0)[cc]{$D_u$}}
\put(80,45){\makebox(0,0)[cc]{$D_v\subset G^-_L$}}
\put(40,90){\makebox(0,0)[cc]{$D_w$}}
\put(18,74){\makebox(0,0)[cc]{$t_i$}}
\put(50,26){\makebox(0,0)[lt]{$x_{k-1}$}}
\put(75,75){\makebox(0,0)[lt]{$x_{k}$}}
\put(42,58){\makebox(0,0)[cc]{$\alpha_k$}}
\end{overpic}
\hspace{0.1\textwidth}
\includegraphics{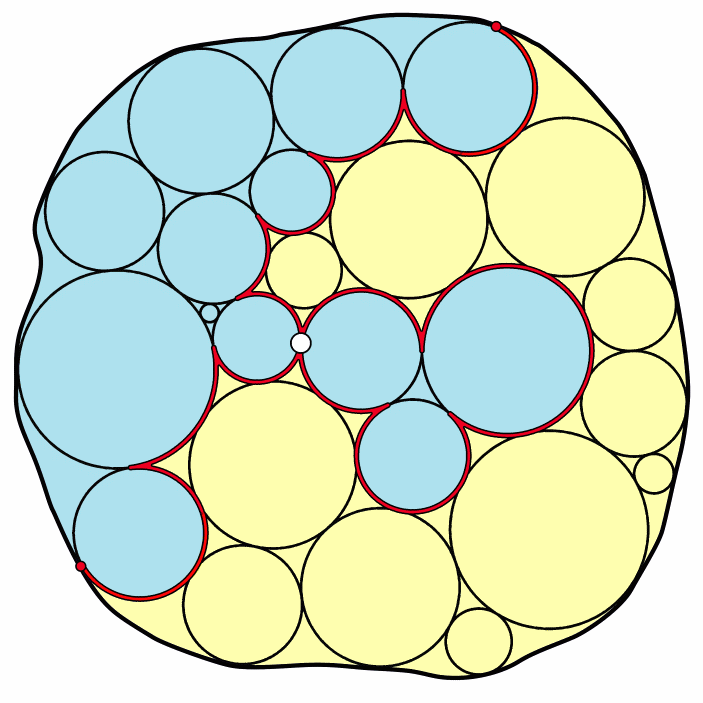}
\caption{Construction of a maximal crosscut (which is not a Jordan arc)}
\label{fig.MaxCut}
\end{center}
\end{figure}
\noindent

Recall that $U_L^+$ is the vertex set of upper neighbors of $L$.
If $x_k$ and $x_{k+1}$ are contact points of the disks $D_u,D_v$ and
$D_v,D_w$, respectively, then either $v\in U_L^+$ or $u,w\in U_L^+$.
The interstice $I(u,v,w)$ is bounded by three (topologically closed) circular arcs
$\alpha_u$, $\alpha_v$ and $\alpha_w$, respectively. If $v\in U_L^+$ we connect $x_{k-1}$
with $x_{k}$ by the arc $a_k:=\alpha_v$, in the second case we connect these
points by the concatenation $a_k:=\alpha_u\cup\alpha_w$ (see Figure~\ref{fig.MaxCut}).
In addition we connect $x_0$ and $x_l$ with $\partial G$ by arcs
$a_0:=\delta(g_j^+,x_0)$ and $a_{l+1}:=\delta(x_l,g_k^-)$ of those circles
$\partial D_j$ and $\partial D_k$ which are upper neighbors of $L$ and contain $x_0$
and $x_l$, respectively. The union $J_L^+:=\bigcup_{k=0}^{l+1} a_k$ of these arcs is
a curve which we call the \emph{maximal crosscut} in $\mathcal{P}$ with respect to $L$.

\smallskip\par

The maximal crosscut $J_L^+$ is composed from a finite number of circular
(topologically closed) arcs $\omega_i$ which are linked at the \emph{turning points}
$t_i$ of $J_L^+$, and every contact point $x_k$ lies exactly on one arc $\omega_i$
(see Figure~\ref{fig.MaxCut}).
If $J_L^+$ is not a Jordan arc, $G \setminus J_L^+$ may consist of several connected
components (see Figure~\ref{fig.MaxCut}, right), one of them
containing all disks associated with vertices $v$ in $V_L^-$.
We call this component $G_L^-$ the \emph{maximal lower domain} for $L$ with respect
to $\mathcal{P}$, and we set $G_L^+:=G \setminus \overline{G_L^-}$.
For the sake of brevity we define $\omega:=J_L^+$ and $\Omega:=G_L^-$.\label{def.omega}

Since the curve $\omega$ can have multiple points (see Figure~\ref{fig.MaxCut}, right)
there is no natural ordering of the \emph{points} on $\omega$. However,
considering $\omega$ as part of the boundary of $\Omega$, we can introduce an
ordering of the \emph{terminal points} $q\in\omega$ of open Jordan arcs
$\gamma(p,q)$ in $\Omega$.
In order to describe this procedure we need the following result.

\begin{lem}[]\label{lem.PropGLM}
For any combinatorial crosscut $L$ the maximal lower domain $\Omega=G_L^-$ is simply
connected and has a locally connected boundary.
\end{lem}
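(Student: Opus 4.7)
The plan is to realise $\Omega=G_L^-$ as an iterated glueing of finitely many simply connected Jordan pieces along common Jordan arcs, and to invoke Lemma~\ref{lem.Geo.Glue} at each step to propagate simple connectedness and local connectedness of the boundary.

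First I would identify the building blocks of $\overline{\Omega}$: the open disks $D_v$ for $v\in V_L^-$; the open interstices $I(u,v,w)$ for every face of $K$ whose vertex set meets $V_L^-$ (this includes the faces $f_1,\dots,f_l$ crossed by $L$); and the open boundary interstices $I_k$ that lie on the lower side of $L$. Each such piece is a Jordan domain, and hence simply connected with locally connected boundary, and any two adjacent pieces meet along a single circular arc of the form $\alpha_v\subset\partial D_v$ (together with a subarc of $\partial G$ in the case of a boundary interstice).

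Second I would carry out the induction. Using that the subcomplex $K_L^-$ is edge-connected, enumerate the vertices of $V_L^-$ as $v_1,\dots,v_N$ so that each $v_i$ with $i\ge 2$ shares a face of $K$ with some $v_j$ of smaller index. Starting from the single disk $D_{v_1}$, at step $i$ I attach $D_{v_i}$ together with the interstice that links it to $D_{v_j}$, glueing first along the arc $\alpha_{v_j}$ and then along $\alpha_{v_i}$. After all lower disks and their separating interstices have been attached in this way, I finish by glueing on the remaining interstices of the $f_k$-faces (whose outer arcs make up the maximal crosscut $\omega$) and the lower boundary interstices, again each via a single circular Jordan arc. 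Since every attachment happens along a single Jordan arc, Lemma~\ref{lem.Geo.Glue} applies at each step and the intermediate region remains simply connected with locally connected boundary.

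The main obstacle is verifying that the assembly produces exactly $\Omega$ and nothing more. I would argue that the assembled region lies in $G\setminus\omega$ (each glueing is performed inside $G$ and is bounded off by pieces of $\omega$), that it is connected, and that it contains every disk $D_v$ with $v\in V_L^-$; hence it must coincide with the connected component $\Omega$ of $G\setminus\omega$ singled out by this property. A subsidiary technicality is that $\omega$ may have self-intersections, so two distinct pieces of $\overline{\Omega}$ can touch at a single point of $\omega$; but every individual glueing in the induction is still along a genuine Jordan arc, so this does not obstruct the application of Lemma~\ref{lem.Geo.Glue} at any step.
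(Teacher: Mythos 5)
Your overall strategy --- decompose $\overline{\Omega}$ into Jordan pieces and propagate simple connectedness and local connectedness through Lemma~\ref{lem.Geo.Glue} --- is in the right spirit, but the induction as you have set it up does not go through. Lemma~\ref{lem.Geo.Glue} requires that the two regions being glued meet \emph{exactly} in a single Jordan arc, and your ordering of the pieces does not guarantee this. Suppose the new vertex $v_i$ is adjacent in $K_L^-$ not only to the chosen $v_j$ but also to some $v_{j'}$ with $j'<i$; this happens already for a flower whose petals are enumerated cyclically, since the last petal touches both its predecessor and the first petal. Then $\overline{D_{v_i}}$ meets the closure of the previously assembled region in the arc $\alpha_{v_i}$ \emph{plus} the isolated contact point $c(v_i,v_{j'})$, which is not a Jordan arc, so the glueing lemma is not applicable. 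Worse, after forming the union anyway, the intermediate region encircles the not-yet-attached interstice of the face $f(v_j,v_i,v_{j'})$ (in the flower example, the interstice between the last petal, the first petal and the center) and is therefore \emph{not} simply connected, so the inductive hypothesis is destroyed; that interstice can then only be glued in along its entire boundary, which is a Jordan curve rather than an arc. Repairing this requires a shelling-type ordering in which every new piece meets the previous union in exactly one arc, and the existence of such an ordering encodes precisely the simple connectedness of $K_L^-$ --- a fact you never establish (you only invoke edge-connectedness), and which is essentially the content of the lemma. Two smaller leaks: each application of Lemma~\ref{lem.Geo.Glue} deletes the two endpoints of the glueing arc, and for you these are contact points of two \emph{lower} disks, i.e.\ interior points of $\Omega$, so even a successful induction would yield $\Omega$ minus finitely many points, which is not simply connected; and the two boundary interstices containing the ends of the crosscut lie only partially in $\Omega$, so they are not pieces of your decomposition as described.

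The paper avoids all of this by working top-down instead of bottom-up. The polygonal crosscut $J_L^0$ is an honest Jordan crosscut of the simply connected domain $G$, so $G\setminus J_L^0$ has exactly two components, and the lower one $G_0^-$ is simply connected from the outset and already contains every lower disk and every interstice lying below $J_L^0$. To pass from $G_0^-$ to $\Omega=G_L^-$ one only has to attach, for each face crossed by $L$ (and for the two end pieces in the boundary interstices), the part of its interstice lying above $J_L^0$. Each such piece is a Jordan domain whose closure meets $\overline{G_0^-}$ exactly in the polygonal arc $[x_{j-1},s_j]\cup[s_j,x_j]\subset J_L^0$; these arcs are pairwise disjoint except for shared endpoints, and the endpoints deleted by Lemma~\ref{lem.Geo.Glue} are the contact points $x_k$, which genuinely belong to $\partial\Omega$. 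A single round of glueings therefore finishes the proof with no combinatorial bookkeeping. If you wish to keep your decomposition, you must first prove that the union of the lower disks and their interstices is simply connected (equivalently, exhibit a shelling of $K_L^-$), at which point the appeal to Lemma~\ref{lem.Geo.Glue} becomes redundant for most of the steps.
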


\begin{proof}
Let $G_0^-$ be the lower domain with respect to the polygonal crosscut $J_0$ in
$\mathcal{P}$.
Then $G \setminus J_L^0$ consists of two simply connected domains $G_0^-$ and
$G_0^+$, respectively. The maximal lower domain $G_L^-$ is constructed by glueing
a finite number of simply connected domains along straight line segments to $G_0^-$. Hence
the assertion follows from Lemma~\ref{lem.Geo.Glue}.
\end{proof}

The assertion of Lemma~\ref{lem.PropGLM} guarantees that any (fixed) conformal mapping
$g: \mathbb{D}\rightarrow\Omega$ has a continuous extension to $\overline{\mathbb{D}}$,
which we again denote by $g$ (see~\cite{PomBook} Theorem~2.1).
With respect to this mapping, we let $\sigma_i\subset\mathbb{T}$ denote the preimage of
the circular arcs $\omega_i$ with $i=1,\ldots,n$. Then $\sigma:=\bigcup_{i=1}^n\sigma_i$
is the preimage of the maximal crosscut $\omega$.

By the Prime End Theorem, the mapping $g$ induces a bijection $g^*$ between $\mathbb{T}$
the set of prime ends of $\Omega$. We denote by $\omega^*:=g^*(\sigma)$ the
set of prime ends associated with $\Omega$, and, for $i=1,\ldots,n$, we let
$\omega_i^*:=g^*(\sigma_i)$ be the subsets of $\omega^*$
corresponding to the arcs $\sigma_i$.\label{def.PrimeEnds}

Note that the preimages $\sigma_i$ of the circular arcs $\omega_i$
are topologically closed subarcs of $\mathbb{T}$, and that the preimage
$\mathbb{T}\setminus\sigma$ of $\partial\Omega\setminus\omega$ is not empty.
Therefore $\sigma_i$ and $\sigma_j$, and thus $\omega_i^*$ and $\omega_j^*$, are
disjoint if $|i-j|>1$, while their intersection contains exactly one element
if $|i-j|=1$.

Further we see that the arcs $\sigma_1, \sigma_2, \ldots, \sigma_n$ (in this
order) are arranged in clockwise direction on $\mathbb{T}$.
It is therefore just natural to order the \emph{points} on the arc $\sigma$
(and hence on each subarc $\sigma_i$) also in \emph{clockwise} direction.
The mapping $g^*$ transplants this ordering from $\sigma$ to the set $\omega^*$
of prime ends.
If $\gamma_1^*=g^*(s_1)$ and $\gamma_2^*=g^*(s_2)$ are two prime ends of $\omega^*$,
the notion $\gamma_1^*\le\gamma_2^*$ refers to the ordering $s_1\le s_2$ of the
associated points on $\sigma$.

\medskip\par

\noindent
Remark. Every $\omega_i$ without its endpoints is an
open Jordan arc, so the interior points of $\omega_i$ and $\sigma_i$ corresponds
one-to-one. Let $\gamma$ in $\Omega$ be an open Jordan arc with terminal point
$q$ on $\omega$, then the associated unique prime end $\gamma^*$ in $\omega^*$
must lie in $\omega^*_i$, if $q$ is an interior point of $\omega_i$. Only if $q$
is an endpoint of $\omega_i$ there is a chance that the prime end $\gamma^*$ is
not contained in $\omega^*_i$, because now $\gamma^*$ depends on how $\gamma$
approaches $q$.

\section{Loners} \label{sec.Loner}

So far we have studied properties of a single circle packing $\mathcal{P}$ which
fills $G$. In the next step we consider pairs $(\mathcal{P},\mathcal{P}')$ of
packings which are subject to the assumptions of Theorem~\ref{thm.CircRigid}.

\begin{defn}\label{def.Chap}
A pair $(\mathcal{P},\mathcal{P}')$ of univalent circle packings for the
complex $K$ is said to be \emph{admissible} (for the crosscut $L$ of $K$ in $G$
with alpha-vertex $v_\alpha$) if it satisfies the following conditions:
\begin{itemize}
\itemsep0mm
\item[{\rm (i)}]
The packing $\mathcal{P}$ fills the bounded, simply connected domain $G$, and
the packing $\mathcal{P}'$ is contained in $G$ (see Definition~\ref{def.Pack.Fill}).
\item[{\rm (ii)}]
For all vertices $v\in U_L^-$ (the lower neighbors of $L$) the disks $D'_v$ are
contained in $G_L^-$ (the maximal lower domain of $G$ for $L$ with respect to
$\mathcal{P}$).
\item[{\rm (iii)}]
The centers of the alpha-disks of $\mathcal{P}$ and $\mathcal{P}'$ coincide
and lie in $G_L^+:=G \setminus G_L^-$.
\end{itemize}
\end{defn}

Though it would be more precise to speak of an admissible sixtuple
$(K,L,G,\mathcal{P},\mathcal{P}',v_\alpha)$, we shall use the term ``admissible''
generously, for instance saying that ``$L$ is an admissible crosscut
for $(\mathcal{P},\mathcal{P}')$''.
\smallskip\par

\noindent
Recall that $U_L^+$ denotes the vertex set of those disks in $\mathcal{P}$
which lie in $G_L^+$ and touch the crosscut (``upper neighbors of $L$'').
In the next step we are going to explore the interplay of the disks $D_v$ and
$D_w'$ for $v,w\in U_L^+$.

\begin{defn}\label{def.Lon.Lon}
Let $(\mathcal{P},\mathcal{P}')$ be an admissible pair of circle packings
for the complex $K$ with crosscut $L$. A vertex $v$ in $U_L^+$ is called a
\emph{loner}, if $D_v'\cap D_w= \emptyset$ for all $w\in U_L^+$ with $w \not=v$.
\end{defn}

The concept of loners was introduced by Schramm~\cite{Schramm2} in a similar
but somewhat different context. The main characteristic of a loner is the following.

\begin{lem}[]\label{lem.Loner.Property}
Let $v$ in $U_L^+$ be a loner of the admissible pair $(\mathcal{P},\mathcal{P}')$
with complex $K$ and crosscut $L$. Then $D'_v\cap(G^+_L\setminus D_v)=\emptyset$.
\end{lem}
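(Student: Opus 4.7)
The plan is to argue by contradiction: suppose some point $z$ lies in $D'_v\cap(G_L^+\setminus D_v)$. Since $D'_v$ and $G_L^+$ are open, I may perturb $z$ so that $z\notin\overline{D_v}$, placing it in the open set $C:=D'_v\setminus\overline{D_v}$. Regardless of how $D'_v$ and $D_v$ sit relative to one another, $C$ is connected: it is either all of $D'_v$ (when the two closed disks are disjoint or externally tangent), a crescent bounded by two circular arcs (when $\partial D'_v$ and $\partial D_v$ cross transversally), or an open annulus (when $\overline{D_v}$ lies in the interior of $D'_v$).

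The next step is to forbid $C$ from touching the crosscut $\omega$. By construction $\omega$ is contained in the union of the circles $\partial D_w$ for $w\in U_L^+$. For every $w\in U_L^+\setminus\{v\}$ the loner hypothesis gives $D'_v\cap D_w=\emptyset$, and since $D'_v$ and $D_w$ are disjoint open convex disks this forces $\partial D_w\cap D'_v=\emptyset$. Combined with the fact that $C\cap\partial D_v=\emptyset$ by the very definition of $C$, this yields $C\cap\omega=\emptyset$. As a connected subset of $G\setminus\omega$ containing $z\in G_L^+$, $C$ therefore lies entirely in $G_L^+$.

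To contradict $C\subset G_L^+$ I would study how $\overline{C}$ meets $\partial D_v$. Let $\alpha_v:=\partial D_v\cap\omega$ be the sub-arc on which $D_v$ touches the crosscut. If $\overline{C}$ contained a point $p\in\alpha_v$, then $C$ would have points arbitrarily close to $p$ on the side of $\partial D_v$ opposite $D_v$; that side of $\alpha_v$ is locally the $G_L^-$-side of $\omega$, forcing $C$ to meet $G_L^-$ and contradicting $C\subset G_L^+$. Hence $\overline{C}\cap\partial D_v\subset\partial D_v\setminus\alpha_v$, which immediately rules out the annulus case (whose $\overline{C}$ contains all of $\partial D_v$, including $\alpha_v$). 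In the crescent case the meeting arc $\gamma:=\overline{C}\cap\partial D_v$ lies in the upper arc of $\partial D_v$, and the interior of $\gamma$ is contained in $D'_v$; the goal is to show $\gamma$ must contain an interior tangency point $c(v,w)$ with a neighbour $w\in U_L^+$, whence openness of $D'_v$ gives $D'_v\cap D_w\neq\emptyset$ and contradicts the loner property.

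I expect the main obstacle to be this last step, together with the edge configuration in which $C=D'_v$ is disjoint from $\overline{D_v}$ (so no meeting arc is available). For that configuration the plan is to exploit the existence of a lower neighbour $u\in U_L^-$ of $v$ (guaranteed by $v\in U_L^+$): the combinatorial tangency of $D'_v$ and $D'_u$ together with admissibility~(ii), which forces $D'_u\subset G_L^-$, places the contact point $c'(u,v)$ on $\omega$, hence on some $\partial D_w$ with $w\in U_L^+$; applying loner and convexity once more to rule out $w\neq v$ forces $w=v$, so $\overline{D'_v}\cap\overline{D_v}\neq\emptyset$, contradicting the disjointness assumed in this subcase.
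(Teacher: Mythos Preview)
Your setup through Step~4 is sound: the set $C=D'_v\setminus\overline{D_v}$ is indeed connected, the loner hypothesis together with $C\cap\partial D_v=\emptyset$ gives $C\cap\omega=\emptyset$, and connectedness then forces $C\subset G_L^+$. Ruling out the annulus case via an interior point of $\alpha_v$ also works.

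The gap is in the crescent case. Your plan there is to show that the arc $\gamma=\overline{C}\cap\partial D_v$ must contain an interior contact point $c(v,w)$ with $w\in U_L^+$, but this need not hold. The upper arc $\partial D_v\setminus\alpha_v$ carries contact points with \emph{all} $V_L^+$-neighbours of $v$; the two turning-point neighbours $v(i-1),v(i+1)\in U_L^+$ sit only at the endpoints, while any neighbours in between may well lie in $V_L^+\setminus U_L^+$. The loner condition says nothing about those disks, so finding such a $c(v,w)$ inside $\gamma$ is not guaranteed, and your contradiction does not close.

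What rescues the crescent case is exactly the device you already deploy for the disjoint case: the contact point $p=c'(u,v)$ with a lower neighbour $u\in U_L^-$. Since $D_v\subset G_L^+$ (same argument you used for $D_w$) and $p\in\overline{D'_u}\subset\overline{G_L^-}$, the point $p$ cannot lie in $D_v$; if $p\notin\overline{D_v}$ then $p\in\overline{C}\subset\overline{G_L^+}$ and you are back to the $p\in\omega$ analysis; if $p\in\partial D_v$ then $p$ is one of the two transversal crossing points $a,b$, and the externally tangent disk $D'_u$ at $p$ is forced to meet $D_v\subset G_L^+$, contradicting $D'_u\subset G_L^-$. This is essentially the paper's organisation: it introduces $p$ at the outset and splits on whether $p\in\partial D_v$, which handles all configurations uniformly and avoids the search for an upper tangency in $\gamma$ altogether.
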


\begin{proof}
Let $u\in U^-_L$ and $w\in U^+_L$ be neighbors of $v$, and let $p$ and $q$ be the
contact points of the disks $D'_v$ with $D'_u$ and $D_v$ with $D_w$, respectively.
Clearly $p\neq q$, otherwise $D'_u$ had to intersect $D_v$ or $D_w$, a
contradiction to condition~(ii) of the admissible pair $(\mathcal{P},\mathcal{P}')$.

Assume that $p$ is a boundary point of $D_v$. Then $\partial D_v$ and $\partial D'_v$
have a common tangent at $p$, otherwise $D'_u$ had to intersect $D_v$, a
contradiction to condition~(ii) of the admissible pair $(\mathcal{P},\mathcal{P}')$.
It follows that either $\overline{D'_v}\setminus\{p\}\subset D_v$ or $D'_v=D_v$ or
$\overline{D_v}\setminus\{p\}\subset D'_v$. The latter implies that $q\in D'_v$, hence
$D'_v\cap D_w\neq\emptyset$, which is impossible since $v$ is a loner. The other
two cases imply the statement we want to prove.

Assume that $p$ is not a boundary point of $D_v$. Suppose that the assertion of
Lemma~\ref{lem.Loner.Property} were false, i.e., there is some point $r$ in
$D'_v$ which is also contained in $G^+_L\setminus D_v$. Because $p$ lies in the
maximal lower domain $G^-_L$, and $r$ lies in the upper domain $G^+_L$, both
subarcs $\delta(p,r)$ and $\delta(r,p)$ of $D'_v$ must intersect the maximal
crosscut $J^+_L$ at points $r_1$ and $r_2$, respectively. Since the vertex
$v$ is a loner, we have $r_1,r_2\in\partial D_v$.
If $r_1=r_2$, the boundary of $D'_v$ is the union of $\delta[p,r_1]$ and
$\delta[r_2,p]$, hence $D'_v\cap G^+_L=\emptyset$, a contradiction to $r\in D'_v$.
If $r_1\neq r_2$, we have $\partial D'_v\cap D_v=\delta(r_2,r_1)$,
hence $r$ must be contained in $D_v$, a contradiction to $r\in G^+_L\setminus D_v$.
\end{proof}

In Section~\ref{sec.Proof} the property of loners described in
Lemma~\ref{lem.Loner.Property} will allow us to move the crosscut $L$
through the packing, reducing in every step the number of circles in $G^+_L$.
The next result is crucial for the applicability of this procedure.

\begin{lem}[Existence of Loners]\label{lem.Dia.Loner}
Every admissible pair $(\mathcal{P},\mathcal{P}')$ of circle packings with crosscut
$L$ has a loner.
\end{lem}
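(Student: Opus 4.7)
The plan is to produce a loner via an extremal argument on the set $U_L^+$, using the ordering of prime ends on the boundary of the maximal lower domain $\Omega=G_L^-$ established at the end of Section~\ref{sec.Cuts}. Two easy subcases are disposed of first. If $D_v'=D_v$ for some $v\in U_L^+$, then $v$ is automatically a loner: the packing $\mathcal{P}$ is univalent, so $D_v'\cap D_w=D_v\cap D_w=\emptyset$ for every other $w\in U_L^+$. If $D_v'\subset\overline{\Omega}$ for some $v\in U_L^+$, then since every $D_w$ with $w\in U_L^+$ has interior contained in $G_L^+=G\setminus\overline{\Omega}$, we again have $D_v'\cap D_w=\emptyset$, so $v$ is a loner. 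Hence assume henceforth that neither case occurs for any $v\in U_L^+$.

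Under this assumption, each $D_v'$ with $v\in U_L^+$ is tangent, across at least one edge of $L$, to some $D_u'$ with $u\in U_L^-$. By admissibility $D_u'\subset\overline{\Omega}$, so the contact point $c_{\mathcal{P}'}(v,u)$ lies on $\partial\Omega$ and, by Lemma~\ref{lem.PrimeContact}, determines a well-defined prime end of $\Omega$. Lemma~\ref{lem.PropGLM} ensures that $\partial\Omega$ is locally connected, so the conformal map $g:\mathbb{D}\to\Omega$ extends continuously and induces the clockwise linear order on $\omega^*$ fixed in Section~\ref{sec.Cuts}. Let $v^\ast\in U_L^+$ be the vertex whose anchor prime end is the first (smallest) one in this order. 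The claim is that $v^\ast$ is a loner.

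To prove the claim, argue by contradiction: suppose $D_{v^\ast}'\cap D_w\neq\emptyset$ for some $w\in U_L^+$ with $w\neq v^\ast$. Since $D_w\subset\overline{G_L^+}$ while the anchor of $D_{v^\ast}'$ lies on $\partial\Omega$, the boundary circle $\partial D_{v^\ast}'$ must cross the maximal crosscut $\omega$. Because $\omega$ is assembled from subarcs of the boundaries $\partial D_{v'}$ with $v'\in U_L^+$, every such crossing identifies some $v'\in U_L^+$ for which $D_{v^\ast}'\cap D_{v'}\neq\emptyset$. The univalence of $\mathcal{P}'$ forbids $D_{v^\ast}'$ from meeting $D_{v'}'$, so $D_{v'}'$ is displaced sufficiently toward $\partial\Omega$ that its own anchor contact point with a lower neighbor realizes a prime end strictly preceding $v^\ast$'s in the clockwise order on $\omega^*$. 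This contradicts the extremal choice of $v^\ast$.

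The hard part will be carrying out the last topological step rigorously: showing that when $\partial D_{v^\ast}'$ crosses $\omega$ at an arc belonging to $\partial D_{v'}$, the image disk $D_{v'}'$ in $\mathcal{P}'$ must be pushed so that its lower anchor realizes an earlier prime end. This requires tracking how $\partial D_{v^\ast}'$ threads between the pairwise disjoint disks $\{D_{v'}'\}_{v'\in U_L^+}$ and transferring the order of crossings of $\omega$ into the order of prime ends on $\partial\Omega$. The main tools will be Lemma~\ref{lem.Geo.Glue} for the glued structure of $\Omega$, Lemma~\ref{lem.PropGLM} for its locally connected boundary, and the Carath\'eodory extension of the conformal map to transfer cyclic order. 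The two extreme ends of $\omega$, where the ordering might a priori wrap around, are handled by condition~(iii) of Definition~\ref{def.Chap}, which anchors the common alpha-disk in $G_L^+$ and so prevents $\mathcal{P}'$ from collapsing entirely into $\Omega$.
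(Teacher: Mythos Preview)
Your setup contains a basic error that undermines the whole extremal argument. From $D_u'\subset\overline{\Omega}$ for $u\in U_L^-$ you conclude that the contact point $c_{\mathcal{P}'}(v,u)$ lies on $\partial\Omega$; this does not follow. The contact point lies in $\overline{\Omega}$, but in general it is an \emph{interior} point of $\Omega$ (this is exactly the ``regular case'' in the paper). Consequently it is not a boundary point of $\Omega$ and does not determine a prime end of $\Omega$, so your ``anchor prime end'' is undefined. Your appeal to Lemma~\ref{lem.PrimeContact} is also misplaced: that lemma concerns contact points lying on $\partial G$ and the prime ends of $G$, not $\partial\Omega$. The paper has to work considerably harder here: it follows $\partial D_v'$ from the contact points $y_i^{\pm}$ along the arcs $\nu_i,\pi_i$ until they \emph{first hit} $\omega$, and only those terminal points give prime ends in $\omega^*$.

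Even if you repaired the definition of the anchor (say by using the first intersection of $\partial D_{v}'$ with $\omega$), your contradiction step is not a proof but a hope. You assert that if $D_{v^\ast}'$ crosses $\omega$ on an arc of $\partial D_{v'}$, then univalence of $\mathcal{P}'$ forces $D_{v'}'$ to have an earlier anchor. Univalence only says $D_{v'}'\cap D_{v^\ast}'=\emptyset$; it places no constraint on where $D_{v'}'$ sits relative to the ordering on $\omega^*$. Nothing prevents $D_{v'}'$ from being pushed ``later'' or off to the side. What actually controls the relative positions of the $D_v'$ is the Jordan arc $\alpha\subset\overline{\Omega}$ (Lemma~\ref{lem.AlphaArc}) that threads through \emph{all} the contact points $y_k$ in their combinatorial order; this is what allows the interlacing inequality of Lemma~\ref{lem.Lone.Interlace} and the counting $l(1)\ge1\Rightarrow r(n)\ge n+1$. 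Your sketch never invokes this global ordering structure, and the final paragraph essentially concedes that the crucial step is still to be done.
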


The proof is divided into several steps;
the first part uses the \emph{geometry} of disks,
then we employ some \emph{topology},
and finally everything is reduced to pure \emph{combinatorics}.
We start with some preparations.
\smallskip\par

\noindent
Recall the definition of the contact points $x_k$: If $L=(e_0,\ldots,e_l)$
and $e_k=\langle u,v\rangle$, for some $k\in \{0,\ldots,l\}$,  then
$x_k:= \overline{D_u}\cap\overline{D_v}$. Using the same notation, the
corresponding contact points of disks in $\mathcal{P}'$ are given by
$y_k:= \overline{D'_u} \cap \overline{D'_v}$, where $Y:=\{y_0,\ldots,y_l\}$
is the set of all such contact points.\label{def.Y}

The contact points $x_k$ form an ordered set on the maximal crosscut $\omega:=J_L^+$,
which is the upper boundary of the maximal lower domain $\Omega:=G_L^-$.
Since every $x_k$ lies on exactly one arc $\omega_i$, the set $X$ of contact
points splits into classes $X_i:=\{x_k\in X: x_k\in \omega_i\}$, $i=1,\ldots,n$.
The set $Y$ of the contact points of $\mathcal{P}'$ is divided accordingly,
$Y_i:=\{y_k\in Y: x_k\in \omega_i\}$ (the $x_k$ is no typo here). Like $X$, the
set $Y$ is endowed with a natural ordering, we write $y_j<y_k$ if $j<k$.
\label{def.XYi}
\smallskip\par

Our next aim is to construct a Jordan arc $\alpha$ which is contained
in $\overline{\Omega}$ and carries the contact points $y_k$ in their natural order.

\begin{figure}[H]
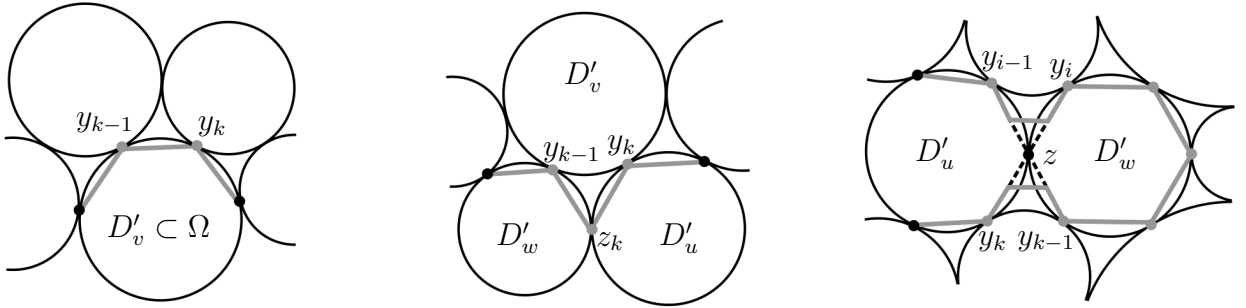

\begin{center}
\begin{overpic}{Figure10a}
\fontsize{12pt}{14pt}\selectfont
\put(51,30){\makebox(0,0)[cc]{$D'_v\subset\Omega$}}
\put(37,58){\makebox(0,0)[cc]{$y_{k-1}$}}
\put(65,57){\makebox(0,0)[cc]{$y_k$}}
\end{overpic}
\hfill
\begin{overpic}{Figure10b}
\fontsize{12pt}{14pt}\selectfont
\put(28,28){\makebox(0,0)[cc]{$D'_w$}}
\put(70,28){\makebox(0,0)[cc]{$D'_u$}}
\put(45,70){\makebox(0,0)[cc]{$D'_v$}}
\put(42,50){\makebox(0,0)[cc]{$y_{k-1}$}}
\put(55,52){\makebox(0,0)[cc]{$y_k$}}
\put(52,27){\makebox(0,0)[cc]{$z_k$}}
\end{overpic}
\hfill
\begin{overpic}{Figure10c}
\fontsize{12pt}{14pt}\selectfont
\put(20,50){\makebox(0,0)[cc]{$D'_u$}}
\put(67,50){\makebox(0,0)[cc]{$D'_w$}}
\put(49,27){\makebox(0,0)[cc]{$y_{k-1}$}}
\put(35,27){\makebox(0,0)[cc]{$y_k$}}
\put(39,74){\makebox(0,0)[cc]{$y_{i-1}$}}
\put(53,72){\makebox(0,0)[cc]{$y_i$}}
\put(50,49){\makebox(0,0)[cc]{$z$}}
\end{overpic}
\caption{Construction of the Jordan arc $\alpha$ in Case~1 (left) and Case~2 (middle,
right)}
\label{fig.Lone.Alpha}
\end{center}
\end{figure}
\noindent

\begin{lem}[]\label{lem.AlphaArc}
If $(\mathcal{P},\mathcal{P}')$ is an admissible pair, then there exist oriented
Jordan arcs $\alpha_k$ from $y_{k-1}$ to $y_k$ such $\alpha := \cup_{k=1,\ldots l}
\alpha_k$ is a Jordan arc in $\overline{\Omega}$ and $\alpha\cap\omega\subset Y$.
\end{lem}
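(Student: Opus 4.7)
For each $k \in \{1, \ldots, l\}$, the consecutive edges $e_{k-1}, e_k$ share a unique face $f_k = f(p_k, v_k, q_k)$ of $K$, with $v_k$ their common endpoint. The three disks $D'_{p_k}, D'_{v_k}, D'_{q_k}$ in $\mathcal{P}'$ bound an interstice $I'_k := I'(p_k, v_k, q_k)$ whose topological boundary is a Jordan curve made from three circular arcs meeting pairwise at $y_{k-1}$, $y_k$ and $z_k := \overline{D'_{p_k}} \cap \overline{D'_{q_k}}$. Cutting this curve at $y_{k-1}$ and $y_k$ leaves two candidates for $\alpha_k$: the single arc $\alpha'_{v_k} \subset \partial D'_{v_k}$ (avoiding $z_k$), and the two-piece concatenation $\alpha'_{p_k} \cup \alpha'_{q_k}$ (passing through $z_k$). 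The plan is to set $\alpha_k := \alpha'_{v_k}$ when $D'_{v_k} \subset \Omega$ (Case~1) and $\alpha_k := \alpha'_{p_k} \cup \alpha'_{q_k}$ otherwise (Case~2).

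First I would verify that this dichotomy keeps each $\alpha_k$ in $\overline{\Omega}$. Case~1 is immediate: $\alpha_k \subset \overline{D'_{v_k}} \subset \overline{\Omega}$. In Case~2, the failure $D'_{v_k} \not\subset \Omega$ combined with admissibility~(ii) rules out $v_k \in U^-_L$, so $v_k \in U^+_L$; the combinatorics of maximal crosscuts then force $p_k, q_k \in U^-_L$, and admissibility~(ii) delivers $D'_{p_k}, D'_{q_k} \subset \Omega$, hence $\alpha_k \subset \overline{D'_{p_k}} \cup \overline{D'_{q_k}} \subset \overline{\Omega}$. Each $\alpha_k$ is an oriented Jordan arc from $y_{k-1}$ to $y_k$, and consecutive arcs $\alpha_k$ and $\alpha_{k+1}$ meet only at the shared endpoint $y_k$.

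The hard part will be ruling out collisions between non-consecutive arcs $\alpha_k, \alpha_j$ with $|j-k| \geq 2$, so that $\alpha := \bigcup_k \alpha_k$ becomes a genuine Jordan arc. Since each arc sits on at most two circles of $\mathcal{P}'$, any intersection must be a contact point of $\mathcal{P}'$ lying on both arcs; I would dispatch the generic configurations by tracking the cyclic ordering of contact points around each involved circle together with the injectivity of the face sequence $(f_1, \ldots, f_l)$. The genuinely delicate scenario (right panel of Figure~\ref{fig.Lone.Alpha}) is $z_k = z_j$ for two Case~2 arcs, which happens precisely when the two faces adjacent to a common edge $e(p, q)$ with $p, q \in U^-_L$ both appear as Case~2 faces; the two offending arcs can be disentangled by pushing one slightly into the interior of a neighboring disk, a perturbation that preserves containment in $\overline{\Omega}$. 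Finally, $\alpha \cap \omega \subset Y$ follows because $\alpha \subset \overline{\Omega}$ and $\omega \subset \partial \Omega$ force any common point to be a tangential touch between a $\mathcal{P}'$-circle carrying some $\alpha_k$ and a $\mathcal{P}$-circle carrying some $\omega_i$; the interstice-boundary construction ensures any such touch is at some $y_i$, and any residual spurious tangencies can be removed by the same kind of interior perturbation.
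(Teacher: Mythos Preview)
Your approach is sound and lands on the same key obstruction (the double point $z_k=z_j$) with the same fix, but the paper takes a different and somewhat cleaner geometric route. Instead of running $\alpha_k$ along the interstice boundary of $\mathcal{P}'$, the paper uses \emph{straight-line chords}: in Case~1 (i.e.\ $v_k\in V_L^-$) it sets $\alpha_k:=[y_{k-1},y_k]$, a chord of $D'_{v_k}$; in Case~2 ($v_k\in V_L^+$) it sets $\alpha_k:=[y_{k-1},z_k]\cup[z_k,y_k]$, two chords of $D'_{p_k}$ and $D'_{q_k}$.

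The payoff of chords is that each \emph{open} segment lies in an open disk of $\mathcal{P}'$ associated with a vertex in $U_L^-$, hence strictly inside $\Omega$. This kills two of your headaches at once: (a) $\alpha$ can meet $\omega$ only at segment endpoints, and since $z_k\notin\omega$ (three mutually tangent circles at a single point with two of them in $\Omega$ is impossible), one gets $\alpha\cap\omega\subset Y$ without any perturbation of ``spurious tangencies''; (b) univalence of $\mathcal{P}'$ makes the open segments pairwise disjoint almost for free, so the entire collision analysis collapses to the single endpoint coincidence $z_k=z_j$. Your interstice-arc construction is correct but forces you to argue that arcs on the same circle $\partial D'_u$ coming from different faces do not overlap, and to perturb away accidental tangencies of $\partial D'_u$ with $\omega$ --- both avoidable with the chord choice. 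The case split is also slightly different: the paper branches on the combinatorial condition $v_k\in V_L^\pm$ rather than on the geometric condition $D'_{v_k}\subset\Omega$, which is equivalent in effect but keeps the argument purely in terms of the crosscut data.
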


\begin{proof}
Let $k\in\{1,\ldots,l\}$. In order to determine the arc $\alpha_k$ of $\alpha$ which
connects $y_{k-1}$ with $y_k$ we remark that both points lie on the boundary of one
and the same disk $D_v'\in \mathcal{P}'$. We distinguish two cases:
\smallskip\par

{\bf Case~1}. If $v\in V_L^-$, then the disk $D_v'$ is contained in $\Omega$, and we choose
the segment $\alpha_k:=[y_{k-1},y_k]$ (see Figure~\ref{fig.Lone.Alpha}, left).
\smallskip\par

{\bf Case~2}. If $v\in V_L^+$, then $e_{k-1},e_k$ and a third edge $\langle u,w\rangle$ of
$K$ form a face of $K$, and the (neighboring) disks $D_u'$ and $D_w'$ are both contained
in ${\Omega}$. So we let $z_k:=\overline{D_u'}\cap\overline{D_w'}$ and connect
$y_{k-1}$ with $y_k$ by $[y_{k-1},z_k]\cup[z_k,y_k]\subset\overline{\Omega}$
(see Figure~\ref{fig.Lone.Alpha}, middle).\label{def.Z}

\smallskip\par

It is clear that all \emph{open} segments $(y_{k-1},y_k)$, $(y_{k-1},z_k)$,
$(z_k,y_k)$ for $k=1,\ldots,l$ are pairwise disjoint, and that $y_k \not=z_j$.
However, it is possible that two endpoints $z_k$ and $z_j$ coincide for
$j\not=k$, in which case the concatenation of the arcs $\alpha_k$ is not a
Jordan arc.

If this happens, the point $z:=z_j=z_k$ is the contact point of two disks
$D_u'$ and $D_w'$ with $u,w\in V_L^-$. A little thought shows that then $z$
can neither lie on the boundary of $G$ nor on $\omega$, and hence it must be an
interior point of $\Omega$. This allows one to resolve the double point of $\alpha$
at $z$ without destroying its other properties (see Figure~\ref{fig.Lone.Alpha},
right.)
\end{proof}

In the next step we transform the existence of loners to a topological
problem. Technically this is much simpler when $\alpha$ and $\omega$ are disjoint.
We consider this `regular case' in Section~\ref{subsec.LoneReg}. The `critical
case', where intersections of $\alpha$ and $\omega$ are admitted, will be treated in
Section~\ref{subsec.LoneCrit}.

\subsection{The Regular Case} \label{subsec.LoneReg}

Here we assume that $\alpha\cap\omega=\emptyset$, which implies that all contact
points $y_k$ ($k=0,\ldots,l)$ lie in the lower domain $\Omega$.

We fix $i\in \{1,\ldots,n\}$ and denote by $y_i^-$ and $y_i^+$ the smallest and
the largest member of $Y_i$ with respect to the natural ordering of $Y$, respectively.
Both points (which may coincide), as well as all elements of $Y_i$, lie on the
same circle $\partial D_v'$, associated with a vertex $v=v(i)\in V$.\label{def.Yi}
\smallskip

Let $\delta_i'$ be the negatively oriented topologically closed subarc of
$\partial D_v'$ from $y_i^-$ to $y_i^+$. We consider the largest subarcs $\nu_i$ and $\pi_i$
of $\delta_i'$ which are contained in $\overline{\Omega}\setminus \omega$
and have initial points $y_i^-$ (for $\eta_i$) and $y_i^+$ (for $\pi_i$), respectively
(see Figure~\ref{fig.Reg.Intersect}).
\label{def.NuPi}

\begin{figure}[H]
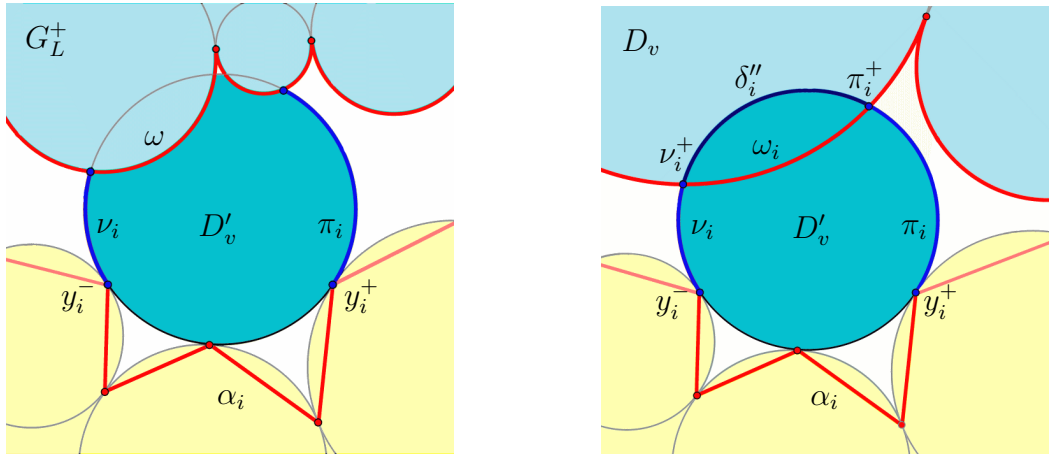

\begin{center}
\begin{overpic}{Figure11a}
\fontsize{12pt}{14pt}\selectfont
\put(9,92){\makebox(0,0)[cc]{$G_L^+$}}
\put(33,70,){\makebox(0,0)[cc]{$\omega$}}
\put(20,50){\makebox(0,0)[lc]{$\nu_i$}}
\put(75,50){\makebox(0,0)[rc]{$\pi_i$}}
\put(47,50){\makebox(0,0)[cc]{$D_v'$}}
\put(20,35){\makebox(0,0)[rc]{$y_i^-$}}
\put(75,35){\makebox(0,0)[lc]{$y_i^+$}}
\put(50,14){\makebox(0,0)[ct]{$\alpha_i$}}
\end{overpic}
\hspace{0.1\textwidth}
\begin{overpic}{Figure11b}
\fontsize{12pt}{14pt}\selectfont
\put(9,92){\makebox(0,0)[cc]{$D_{v}$}}
\put(33,80){\makebox(0,0)[cb]{$\delta_i''$}}
\put(59,80){\makebox(0,0)[cb]{$\pi_i^+$}}
\put(37,68){\makebox(0,0)[cc]{$\omega_i$}}
\put(21,67){\makebox(0,0)[cr]{$\nu_i^+$}}
\put(50,14){\makebox(0,0)[ct]{$\alpha_i$}}
\put(20,50){\makebox(0,0)[lc]{$\nu_i$}}
\put(73,50){\makebox(0,0)[rc]{$\pi_i$}}
\put(47,50){\makebox(0,0)[cc]{$D_{v}'$}}
\put(20,34){\makebox(0,0)[rc]{$y_i^-$}}
\put(72,34){\makebox(0,0)[lc]{$y_i^+$}}
\put(,){\makebox(0,0)[cc]{$ $}}
\end{overpic}
\caption{The arcs $\nu_i$ and $\pi_i$ and their intersection with the boundary
of $G_L^+$}
\label{fig.Reg.Intersect}
\end{center}
\end{figure}

\begin{lem}[]\label{lem.Lone.NuPiToOmega}
If there exists no loner, then the terminal points $\nu_i^+$ and $\pi_i^+$ of
$\nu_i$ and $\pi_i$, respectively, lie on $\omega$ for $i=1,\ldots,n$.
\end{lem}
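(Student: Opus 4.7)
The plan is to argue by contradiction: assuming no loner exists but $\nu_i^+\notin\omega$ for some $i$, I will show that $v:=v(i)\in U_L^+$ is then itself a loner. (The case $\pi_i^+\notin\omega$ is symmetric.) By the maximality of $\nu_i$ as a subarc of $\delta_i'$ starting at $y_i^-$ and lying in $\overline{\Omega}\setminus\omega$, the hypothesis $\nu_i^+\notin\omega$ forces $\nu_i=\delta_i'$, so $\delta_i'\subset\overline{\Omega}\setminus\omega$. Once I establish $D_v'\subset\overline{\Omega}$, every $w\in U_L^+\setminus\{v\}$ satisfies $D_w\subset G_L^+=G\setminus\overline{\Omega}$, whence $D_v'\cap D_w=\emptyset$ and $v$ qualifies as a loner.

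To prove $D_v'\subset\overline{\Omega}$, I will construct a Jordan curve $\Gamma\subset\overline{\Omega}$ enclosing $D_v'$ and invoke the simple connectedness of $\overline{\Omega}$ from Lemma~\ref{lem.PropGLM}. Write $Y_i=\{y_{k_1},\ldots,y_{k_m}\}$ with $y_{k_1}=y_i^-$ and $y_{k_m}=y_i^+$, where the indices $k_j$ form the consecutive block of edges of $L$ adjacent to $v$ during the single visit producing the arc $\omega_i$. For each $j$ the face shared by the consecutive edges $e_{k_j},e_{k_{j+1}}$ is $\langle v,w_j,w_{j+1}\rangle$ with $w_j,w_{j+1}\in U_L^-$; since $v\in V_L^+$, Case~2 of Lemma~\ref{lem.AlphaArc} yields the polygonal arc $\alpha_{k_j+1}=[y_{k_j},z_{k_j+1}]\cup[z_{k_j+1},y_{k_{j+1}}]\subset\overline{D_{w_j}'}\cup\overline{D_{w_{j+1}}'}\subset\overline{\Omega}$. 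Their concatenation is a Jordan arc $\beta\subset\overline{\Omega}$ from $y_i^-$ to $y_i^+$ (it is the sub-arc of $\alpha$ between these endpoints), and I take $\Gamma:=\delta_i'\cup\beta$.

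The key technical point is that $\Gamma$ is actually a Jordan curve, i.e.\ that $\beta\cap\delta_i'=\{y_i^-,y_i^+\}$. This follows from the univalence of $\mathcal{P}'$: the tangent disks $D_v'$ and $D_{w_j}'$ share only the point $y_{k_j}$, so the chord $[y_{k_j},z_{k_j+1}]\subset\overline{D_{w_j}'}$ meets $\partial D_v'$ only at $y_{k_j}$, and analogously for the other chord segment. With $\Gamma$ a Jordan curve contained in the closed, bounded, simply connected set $\overline{\Omega}$, its bounded complementary component $R$ lies in $\overline{\Omega}$. A short geometric inspection then yields $D_v'\subset R$: the chords composing $\beta$ live in closures of disks tangent to and disjoint from $D_v'$, so they do not enter the open disk $D_v'$, while near the shared boundary $\delta_i'$ the ``inside'' of $R$ and the interior of $D_v'$ lie on the same (centerward) side. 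Hence $D_v'\subset R\subset\overline{\Omega}$, completing the argument. I expect the main obstacle to be precisely this last geometric inclusion, which requires careful bookkeeping of the negative orientation of $\delta_i'$ (opposite the $y_k$-arc $\sigma:=\partial D_v'\setminus\delta_i'$) against the position of $\beta$, which dips ``below'' $\sigma$ through the $z$-contacts of the lower disks; the degenerate case $m=1$ is easier, as $\partial D_v'\subset\overline{\Omega}\setminus\omega$ gives $D_v'\subset\Omega$ immediately by connectedness.
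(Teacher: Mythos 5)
Your proposal is correct and follows essentially the same route as the paper: the failure of $\nu_i^+$ (or $\pi_i^+$) to land on $\omega$ forces $\nu_i=\delta_i'$ (resp.\ $\pi_i=\delta_i'$), and then $\delta_i'$ together with the subarc of $\alpha$ from $y_i^-$ to $y_i^+$ encloses $D_{v(i)}'$ inside $\overline{\Omega}$, making $v(i)$ a loner --- the paper compresses your Jordan-curve construction into the single phrase that $D_{v(i)}'$ ``is separated from $G_L^+$ by the union of the arcs $\alpha$ and $\delta_i'$''. One small repair: to get $R\subset\overline{\Omega}$ you should not appeal to simple connectedness of the \emph{closed} set $\overline{\Omega}$ (the closure of a simply connected domain can enclose holes, and here $\mathbb{C}\setminus\overline{\Omega}$ may indeed have bounded components when $\omega$ has loops), but rather observe that in the regular case $\Gamma$ avoids $\omega$ and $\partial G$, hence avoids $\partial\Omega$ and lies in the open simply connected domain $\Omega$ itself, which does give $R\subset\Omega$.
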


\begin{proof}
If one of the arcs $\nu_i$ or $\pi_i$ does not intersect $\omega$, then both coincide
with $\delta_i'$. In this case, the disk $D_{v(i)}'$ is separated from $G_L^+$ by the
union of the arcs $\alpha$ and $\delta_i'$, which implies that $D_{v(i)}'$ cannot
intersect any disk $D_w$ with $w\in U_L^+$, so that $v(i)$ is a loner.
\end{proof}

Since (with the exception of their endpoints) the circular arcs $\nu_i$
($i=2,\ldots, n$) and $\pi_i$ ($i=1,\ldots, n-1$) lie in $\Omega$ and have
terminal points $\nu_i^+$ and $\pi_i^+$ on $\omega$, they define prime
ends $\nu_i^*$ and $\pi_i^*$ in $\omega^*$.
\label{def.Prime.NuPi}
Because the arcs $\nu_1$ and $\pi_n$ need not lie in $\Omega$, a modified
definition is needed for the prime ends $\nu_1^*$ and $\pi_n^*$. To do so
we replace $\nu_1$ and $\pi_n$ by slightly perturbed circular arcs
$\nu_1^\varepsilon$ and $\pi_n^\varepsilon$, respectively, which have the same
endpoints as $\nu_1$ and $\pi_n$, respectively, and lie in $\Omega$
(with the exception of their endpoints).
Then $\nu_1^*$ and $\pi_n^*$ are defined as the prime ends associated
with the terminal points of $\nu_1^\varepsilon$ and $\pi_n^\varepsilon$,
respectively. Clearly such arcs $\nu_1^\varepsilon$ and $\pi_n^\varepsilon$
exist, and for all sufficiently small $\varepsilon$ they define the same
prime ends $\nu_1^*,\pi_n^*\in \omega^*$, respectively.
\smallskip\par

\noindent
Since the set of prime ends $\omega^*$ is endowed with a natural ordering, we can
compare the prime ends $\nu_i^*$ and $\pi_i^*$.

\begin{lem}[]\label{lem.Lone.Interlace}
If $(\mathcal{P},\mathcal{P}')$ has no loner, the prime ends $\nu_i^*$ and $\pi_i^*$
form an interlacing sequence with respect to the prime end ordering of $\omega^*$,
\[
\nu_1^* \le \pi_1^* \le \nu_2^* \le \pi_2^* \le \ldots \le \nu_n^* \le \pi_n^*.
\]
\end{lem}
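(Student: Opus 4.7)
The plan is to reduce the interlacing to a standard planar fact: in a simply connected domain whose boundary is locally connected, a finite family of pairwise non-crossing Jordan arcs, each joining a common Jordan ``source arc'' to a common ``target arc'', induces the same order on both sets of endpoints. In our setting the source arc is $\alpha$, the target arc is $\omega$, and the family is $\{\nu_i,\pi_i\}_{i=1}^n$. As a preliminary, I would record the order of starting points along $\alpha$: by definition $Y_i$ consists of those $y_k$ whose companion $x_k$ lies on $\omega_i$, and since the arcs $\omega_1,\ldots,\omega_n$ appear along $\omega$ from $s_0$ to $s_{l+1}$ in this order and $y_i^-,y_i^+$ are the minimum and maximum of $Y_i$ in the natural ordering of $Y$, the starting points appear on $\alpha$ as $y_1^-\le y_1^+\le y_2^-\le\ldots\le y_n^-\le y_n^+$.

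The next step is to verify that $\{\nu_i,\pi_i\}_{i=1}^n$ is pairwise non-crossing in $\overline{\Omega}$. Each arc is a subarc of some $\partial D_v'$. Since $\mathcal{P}'$ is univalent, boundaries of distinct disks in $\mathcal{P}'$ meet in at most one tangency point, so arcs lying on distinct circles cannot cross; arcs on a common circle (as occurs for $\nu_i$ versus $\pi_i$, or for $\pi_i$ versus $\nu_{i+1}$ when $v(i)=v(i+1)$) lie on a simple closed curve and therefore also cannot cross. By Lemma~\ref{lem.Lone.NuPiToOmega} and the maximality built into their definition, each $\nu_i,\pi_i$ is entirely contained in $\overline{\Omega}\setminus\omega$ except for its single terminal endpoint on $\omega$.

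Then I would transfer the picture to the unit disk via a continuous extension of a conformal map $g:\overline{\mathbb{D}}\to\overline{\Omega}$, whose existence is secured by Lemma~\ref{lem.PropGLM} together with the boundary extension theorem cited earlier in the paper. The preimage of $\omega$ is $\sigma\subset\mathbb{T}$, and by construction the clockwise order on $\sigma$ corresponds via $g^*$ to the prime-end order on $\omega^*$. The lifts $\tilde\nu_i,\tilde\pi_i$ still form a non-crossing family in $\overline{\mathbb{D}}$, with starting points on the lift $\tilde\alpha$ of $\alpha$ arranged in the order listed above, and terminal points on $\sigma$. A Jordan curve argument applied to closed curves built from pairs of lifted arcs, the subarc of $\tilde\alpha$ between their initial points, and the subarc of $\sigma$ between their terminal points then forces the terminal order on $\sigma$ to match the initial order on $\tilde\alpha$, which is precisely the asserted chain of inequalities.

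The main obstacle I anticipate is the degenerate bookkeeping. When $y_i^-=y_i^+$ the arcs $\nu_i$ and $\pi_i$ emanate from a single point (so $\nu_i^*\le\pi_i^*$ can be an equality rather than strict), and when $v(i)=v(i+1)$ the arcs $\pi_i$ and $\nu_{i+1}$ lie on a common circle and may even share an endpoint, forcing $\pi_i^*\le\nu_{i+1}^*$ with equality possible. One must also justify that the perturbations $\nu_1^\varepsilon,\pi_n^\varepsilon$ used to define the extreme prime ends can be chosen small enough to introduce no new crossings with the rest of the family and to remain on the appropriate side of $\tilde\alpha$ in $\mathbb{D}$, so that the chord-ordering principle applies uniformly and the resulting inequalities $\nu_1^*\le\pi_1^*$ and $\nu_n^*\le\pi_n^*$ are insensitive to the perturbation.
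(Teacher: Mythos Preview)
Your proposal is correct and follows essentially the same approach as the paper: order the initial points $y_i^\pm$ along $\alpha$, observe that the arcs $\nu_i,\pi_i$ are pairwise non-crossing (as subarcs of boundaries of non-overlapping disks of $\mathcal{P}'$), and transfer the order to the terminal prime ends via a conformal uniformization. The only cosmetic difference is that the paper first carves out a subdomain $\Omega_0\subset\Omega$ bounded by $\nu_1,\alpha,\pi_n$ and a piece $\omega_0$ of $\omega$, and maps it conformally to a rectangle (so that the remaining $\nu_i,\pi_i$ become crosscuts joining opposite sides), whereas you work directly in $\Omega$ mapped to $\mathbb{D}$.
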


\begin{proof}
Let $y_-:=y_0$ and $z_-$ be the initial and terminal points of $\nu_1$, while
$y_+:=y_l$ and $z_+$ are the initial and terminal points of $\pi_n$,
respectively. We have $z_-,z_+\in\omega$ due to Lemma~\ref{lem.Lone.NuPiToOmega}.
\label{def.yz-}

Further, let $\omega^*_0$ be the set of all prime ends $\gamma^*$ of $\omega^*$
with $\nu_1^*\leq\gamma^*\leq\pi_n^*$, and denote the set of all corresponding
points on $\omega$ by $\omega_0$.
The set $\omega_0$ is a curve or a single point. Together with the Jordan arcs
$\nu_1$, $\alpha$
and $\pi_n$ it forms the boundary of a simply connected domain $\Omega_0\subset\Omega$
with locally connected boundary. Let $\Omega_0^*$ be the set of all prime ends
associated with points on $\partial\Omega_0$. Because $\Omega_0\setminus\omega_0$
is an open Jordan arc, the points $y_-,y_+$ are associated with uniquely determined
prime ends $y_-^*,y_+^*$ of $\Omega_0$.

Contrary to this, the points $z_-,z_+$ may be associated with several prime ends
of $\Omega_0$. In order to explain which one we choose, let again
$\nu_1^\varepsilon,\pi_n^\varepsilon$ be small perturbations (as explained above)
of $\nu_1,\pi_n$, respectively, so that both arcs are crosscuts in $\Omega_0$.
We define $z_-^*$ and $z_+^*$ as the prime ends in $\omega^*$ associated with
the terminal points $z_-$ and $z_+$ of $\nu_1^\varepsilon,\pi_n^\varepsilon$,
respectively.

We have $n>1$, because otherwise a loner would exist. It follows that
$y_-\neq y_+$, so $y_-^*\neq y_+^*$. From $\alpha\cap\omega=\emptyset$ we
get $z_-,z_+\notin\{y_-,y_+\}$, hence $z_-^*,z_+^*\notin\{y_-^*,y_+^*\}$.

If $z_-^*=z_+^*=:z^*$, we directly get $\omega^*\cap\Omega_0^*=z^*$.
This implies $\nu_1^*=\pi_1^*=\nu_2^*=\ldots=\pi_n^*=z^*$, so the lemma holds.
(We consider this case here, though Lemma~\ref{lem.Lone.Crit} shows, that
it cannot occur.)

If $z_-^*\neq z_+^*$, the prime ends $y_-^*$, $y_+^*$, $z_-^*$ and $z_+^*$ are
pairwise distinct and with respect to the (cyclic) ordering of $\Omega_0$ we have
$y_-^*<y_+^*<z_-^*<z_+^*<y_-^*$. Therefore $\Omega_0$ can be mapped conformally
onto a rectangle $Q$ (with appropriately chosen aspect ratio) such that
$y_-^*,y_+^*,z_-^*$ and $z_+^*$ correspond to the four corners of $Q$
(see~\cite{PomBook}), what is depicted in Figure~\ref{fig.Lone.Interlace}.

Any of the arcs $\nu_i$ ($i=2,\ldots,n)$ and $\pi_i$ ($i=1,\ldots,n-1$) is mapped onto
a crosscut of $Q$ which connects two opposite sides of this rectangle. Since these
Jordan arcs cannot cross each other in the interior of $Q$, the ordering
of their initial points on one side of $Q$ is transplanted to the ordering of their
terminal points on the opposite side of $Q$. Translated back to $\Omega_0$, this
implies that the ordering of the prime ends $\nu_i^*$ and $\pi_i^*$ is the same
as the ordering of the initial points $y_i^-$ and $y_i^+$ of $\nu_i$ and $\pi_i$,
respectively, along the Jordan curve $\alpha$. By construction, the latter points form
an interlacing sequence.
\end{proof}

\begin{figure}[H]
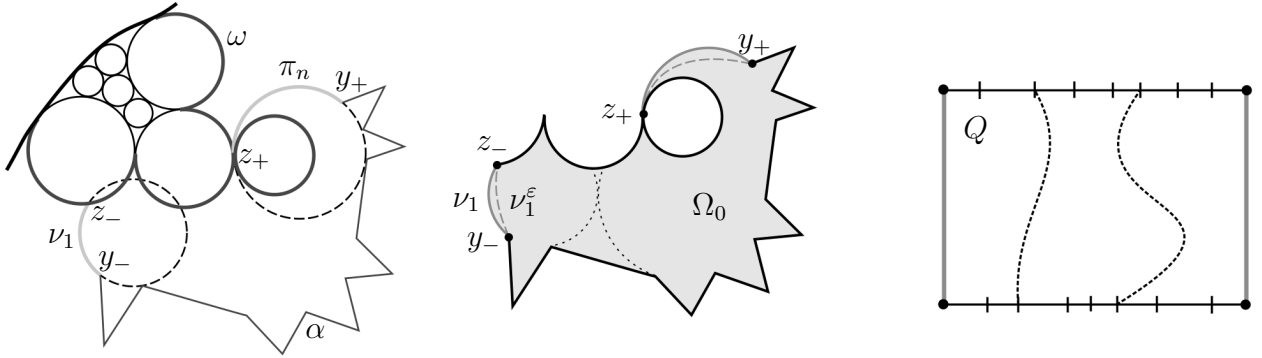

\begin{center}
\begin{overpic}{Figure12a}
\fontsize{12pt}{14pt}\selectfont
\put(58,87){\makebox(0,0)[cc]{$\omega$}}
\put(77,15.5){\makebox(0,0)[cc]{$\alpha$}}
\put(18.5,38){\makebox(0,0)[rc]{$\nu_1$}}
\put(72,77){\makebox(0,0)[cb]{$\pi_n$}}
\put(24,30){\makebox(0,0)[lb]{$y_-$}}
\put(22,46){\makebox(0,0)[lt]{$z_-$}}
\put(86,74){\makebox(0,0)[cb]{$y_+$}}
\put(62,55){\makebox(0,0)[cb]{$z_+$}}
\end{overpic}
\hfill
\begin{overpic}{Figure12b}
\fontsize{12pt}{14pt}\selectfont
\put(11,58){\makebox(0,0)[cb]{$z_-$}}
\put(45.5,68.5){\makebox(0,0)[rc]{$z_+$}}
\put(13,37){\makebox(0,0)[ru]{$y_-$}}
\put(75,83){\makebox(0,0)[cb]{$y_+$}}
\put(64,46){\makebox(0,0)[cc]{$\Omega_0$}}
\put(8,47){\makebox(0,0)[rc]{$\nu_1$}}
\put(15,47){\makebox(0,0)[lc]{$\nu_1^\varepsilon$}}
\end{overpic}
\hfill
\begin{overpic}{Figure12c}
\fontsize{12pt}{14pt}\selectfont
\put(21,69){\makebox(0,0)[cc]{$Q$}}
\end{overpic}
\caption{Construction of $\Omega_0$ and $Q$ from $\omega,\alpha$ and $\nu_1,\pi_n$}
\label{fig.Lone.Interlace}
\end{center}
\end{figure}

\begin{lem}[]\label{lem.Lone.Crit}
If both prime ends $\nu_i^*$ and $\pi_i^*$
belong to $\omega_i^*$, then the corresponding vertex $v(i)$ is a loner.
\end{lem}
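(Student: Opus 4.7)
The plan is to show, under the hypothesis, that $D'_{v(i)}\subset \overline{\Omega}\cup \overline{D_{v(i)}}$. Once this is established, the loner property follows at once, because every disk $D_w$ with $w\in U_L^+\setminus\{v(i)\}$ lies in the open set $G_L^+\setminus\overline{D_{v(i)}}$, which is disjoint from $\overline{\Omega}\cup\overline{D_{v(i)}}$; hence $D'_{v(i)}\cap D_w=\emptyset$.

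The crucial observation is that $\omega_i\subset \partial D_{v(i)}$: every contact point $y_k\in Y_i$ lies on $\partial D'_{v(i)}$ by definition of $v(i)$, and the corresponding $x_k$ all lie on $\omega_i$, which forces $v(i)\in U_L^+$ to be exactly the upper-neighbor vertex carrying $\omega_i$. Consequently the assumption $\nu_i^*,\pi_i^*\in\omega_i^*$ places the terminal points $\nu_i^+,\pi_i^+$ on $\partial D_{v(i)}$; they also lie on $\partial D'_{v(i)}$, as endpoints of the arcs $\nu_i,\pi_i$. The two circles $\partial D'_{v(i)}$ and $\partial D_{v(i)}$ therefore share these points. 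If they coincide, then $D'_{v(i)}=D_{v(i)}$ and the conclusion is immediate from univalence of $\mathcal{P}$. Otherwise the two distinct circles meet exactly at $\{\nu_i^+,\pi_i^+\}$. Since $\nu_i,\pi_i\subset \overline{\Omega}\setminus\omega$ approach this set from the exterior of $D_{v(i)}$, the transverse crossings force the middle portion of $\delta'_i$ to enter $D_{v(i)}$, and the two intersection points partition $\partial D'_{v(i)}$ into an inner arc contained in $\overline{D_{v(i)}}$ (the middle of $\delta'_i$) and an outer arc lying in the exterior of $D_{v(i)}$, which contains $\nu_i$, $\pi_i$, and the complementary subarc $\partial D'_{v(i)}\setminus\delta'_i$.

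The principal and hardest step is then to verify that the outer arc lies entirely in $\overline{\Omega}$; combined with the inner arc in $\overline{D_{v(i)}}$, this traps $\partial D'_{v(i)}$, and hence $D'_{v(i)}$ itself, in $\overline{\Omega}\cup \overline{D_{v(i)}}$. The outer arc meets $\omega_i$ only at its endpoints, so any excursion outside $\overline{\Omega}$ would have to cross some $\omega_j\subset \partial D_{u_j}$ with $j\neq i$. I would rule out such an excursion by translating the picture through the conformal map $g:\mathbb{D}\to\Omega$: any extra crossing would produce a prime end of $\Omega$ in $\omega^*\setminus\omega_i^*$ lying strictly between $\nu_i^*$ and $\pi_i^*$ along the outer arc, contradicting the assumption that both $\nu_i^*$ and $\pi_i^*$ are confined to the single subarc $\omega_i^*$ of $\omega^*$. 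The degenerate tangent subcase $\nu_i^+=\pi_i^+$, and in particular the potential configuration of internal tangency $D_{v(i)}\subset D'_{v(i)}$, will require an analogous analysis combined with the admissibility condition $D'_w\subset G_L^-$ for $w\in U_L^-$ to rule out pathological positions of $D'_{v(i)}$; this exclusion of extraneous crossings is the main technical difficulty of the proof.
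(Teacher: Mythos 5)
Your reduction of the loner property to $D'_{v(i)}\subset\overline{\Omega}\cup\overline{D_{v(i)}}$ is fine, as are the observations that $\omega_i\subset\partial D_{v(i)}$, that $\nu_i^+,\pi_i^+\in\partial D_{v(i)}\cap\partial D'_{v(i)}$, and that the ``middle'' arc $\delta_i''$ of $\partial D'_{v(i)}$ lies in $\overline{D_{v(i)}}$ (this much coincides with the paper's proof). The gap is in the final inference ``outer arc $\subset\overline{\Omega}$ and inner arc $\subset\overline{D_{v(i)}}$ traps $D'_{v(i)}$ in $\overline{\Omega}\cup\overline{D_{v(i)}}$.'' Containment of $\partial D'_{v(i)}$ in the set $A:=\overline{\Omega}\cup\overline{D_{v(i)}}$ does not give containment of the disk: $A$ may have bounded complementary components (the maximal crosscut $\omega$ can loop, so $G\setminus\overline{\Omega}$ need not be connected), and $D'_{v(i)}$ could swallow such a hole together with an entire upper disk $D_w$, $w\in U_L^+\setminus\{v(i)\}$. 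This is exactly the case the paper's proof isolates (``\dots or one of the disks $D_u$ is contained in $D_v'$'') and then disposes of with a separate prime-end argument. Your crossing-detection mechanism does not see this case: if $D_w\subsetneq D'_{v(i)}$, the arc $\omega_j\subset\partial D_w$ lies strictly inside the open disk $D'_{v(i)}$, so the outer arc of $\partial D'_{v(i)}$ never crosses $\omega_j$ and no ``extra crossing'' is produced, yet $v(i)$ is not a loner.

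Second, even where a crossing does occur, the contradiction you invoke is not established. ``Strictly between $\nu_i^*$ and $\pi_i^*$ along the outer arc'' is not the same as strictly between them in the prime-end order of $\omega^*$; to transfer one to the other you need the outer arc (or its maximal subarcs in $\Omega$) to act as crosscuts of $\Omega$ whose endpoints you can locate, which is essentially what you are trying to prove, and an arc that repeatedly leaves and re-enters $\Omega$ can in any case meet $\omega^*$ in a nested, non-monotone order. (Note also that a prime end lying between two elements of $\omega_i^*$ in the order of $\omega^*$ automatically belongs to $\omega_i^*$, since $\omega_i^*$ corresponds to a subarc of $\mathbb{T}$; the contradiction must therefore come from trapping a whole class $\omega_j^*$ between $\nu_i^*$ and $\pi_i^*$, not merely some prime end.) The paper's proof avoids both problems by working with the Jordan curve $\alpha_i\cup\nu_i\cup\delta_i''\cup\pi_i$, all of whose pieces are already known to lie in $\overline{\Omega}$ or in $D_{v(i)}$: every other upper disk is then either disjoint from $D'_{v(i)}$ or contained in it, and the latter is excluded precisely because it would force $\omega_j^*$ to sit between $\nu_i^*$ and $\pi_i^*$. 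You would need to add an argument of this kind to close your proof.
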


\begin{proof}
Let $v:=v(i)$. It follows from $\nu_i^*,\pi_i^*\in\omega_i^*$ that
$\nu_i^+,\pi_i^+\in \omega_i \subset\partial D_v$. If $\pi_i^+\neq\nu_i^+$, the
positively oriented open subarc $\delta_i''$ of $P'_v$ from $\pi_i^+$ to
$\nu_i^+$ lies in $D_v$. If $\pi_i^+=\nu_i^+$, we set $\delta_i'':=\emptyset$.
In both cases the union of $\alpha_i,\pi_i,\delta_i''$ and $\nu_i$ is a Jordan
curve which does not intersect the disks $D_u$ with $u\in U_L^+$ and $u \not=v$.
So either $D_v'$ is disjoint to all such disks $D_u$, or one of the disks $D_u$ is
contained in $D_v'$. In the latter case the prime ends $\nu_i^*$ and $\pi_i^*$
cannot both belong to the same set $\omega_i^*$.
\end{proof}

\noindent
\begin{proof}[Proof of Lemma~\ref{lem.Dia.Loner}.]
After these preparations we are ready to harvest the fruits: Assume that
$(\mathcal{P},\mathcal{P}')$ has no loner.
Then, by Lemma~\ref{lem.Lone.NuPiToOmega}, the endpoint $\nu_i^+$ of the arc
$\nu_i$ must lie on $\omega$ and hence $\nu_i$ is associated with a prime end
$\nu_i^*\in\omega^*$.
If $\nu_i^*\in \omega_k^*$, we choose the smallest such $k$ and set $l(i):=k$.
Similarly, we denote by $r(i)$ the smallest number $k$ for which $\pi_i^*\in\omega_k^*$.
\label{def.number}

Lemma~\ref{lem.Lone.Interlace} tells us that $r(i)\ge l(i)$ and $l(i+1)\ge r(i)$.
In conjunction with Lemma~\ref{lem.Lone.Crit} we conclude that the first condition
implies $r(i)\ge l(i)+1$. Starting with $l(1)\ge 1$, we get inductively that
$r(i)\ge i+1$ for $i=1,\ldots,n$, ending up with the contradiction $r(n)\ge n+1$.
This proves Lemma~\ref{lem.Dia.Loner} in the regular case.
\end{proof}

\subsection{The Critical Case} \label{subsec.LoneCrit}

The second case, where we admit that $\alpha\cap\omega\not=\emptyset$, will be
reduced to the regular case by an appropriate deformation of the Jordan arc
$\alpha$.

\begin{defn}\label{def.Lone.Reg}
A contact point $y\in Y$ is called \emph{regular} if $y\notin\omega$, otherwise
it is said to be \emph{critical}.
\end{defn}

If $y\in Y$ is a critical contact point, then $y\in \alpha\cap\omega\not=\emptyset$,
and hence $y\in \omega_j$ for some $j$. Since $y=\partial{D_u'} \cap\partial {D_v'}$
with some $u\in U_L^-$ and $v=v(i)\in U_L^+$, we see that $y$ cannot be an endpoint
of $\omega_j$ (turning point of $\omega$) -- otherwise $D_u'$ would not be
contained in $\Omega$. Moreover, the circles $\partial D_u'$, $\partial D_v'$, and
$\omega_j$ must be mutually tangent at $y$.
The arc $\omega_j$ is a subset of the circle $\partial D_w$ (with $w=v(j)\in
U_L^+$). Hence either $D'_v\subset D_w$ (with $D'_v=D_w$ admitted) or $D_w$ is
a proper subset of $D'_v$.
\smallskip\par

In the next step we modify the Jordan arc $\alpha$ in a neighborhood of $y$
and redefine the arcs $\nu_i$ and $\pi_i$ (connecting $y$ with $\omega$) introduced
in the regular case.

\begin{figure}[H]
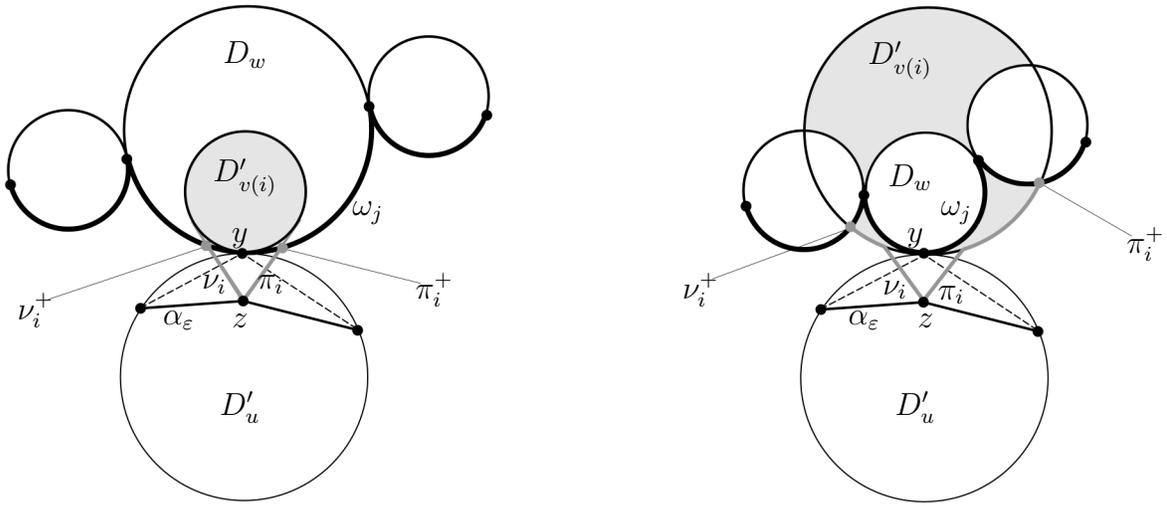

\begin{center}
\begin{overpic}{Figure13a}
\fontsize{12pt}{14pt}\selectfont
\put(48,89){\makebox(0,0)[cc]{$D_w$}}
\put(48,65){\makebox(0,0)[cc]{$D'_{v(i)}$}}
\put(47,20){\makebox(0,0)[cc]{$D'_u$}}
\put(47,53){\makebox(0,0)[cc]{$y$}}
\put(72,58){\makebox(0,0)[cc]{$\omega_j$}}
\put(42,44){\makebox(0,0)[cc]{$\nu_i$}}
\put(53,44){\makebox(0,0)[cc]{$\pi_i$}}
\put(7,39){\makebox(0,0)[cc]{$\nu_i^+$}}
\put(85,43){\makebox(0,0)[cc]{$\pi_i^+$}}
\put(47,37){\makebox(0,0)[cc]{$z$}}
\put(35,37){\makebox(0,0)[cc]{$\alpha_{\varepsilon}$}}
\end{overpic}
\hspace{0.1\textwidth}
\begin{overpic}{Figure13b}
\fontsize{12pt}{14pt}\selectfont
\put(47,88){\makebox(0,0)[cc]{$D'_{v(i)}$}}
\put(49,65){\makebox(0,0)[cc]{$D_w$}}
\put(50,20){\makebox(0,0)[cc]{$D'_u$}}
\put(50,53){\makebox(0,0)[cc]{$y$}}
\put(58,58){\makebox(0,0)[cc]{$\omega_j$}}
\put(46,43){\makebox(0,0)[cc]{$\nu_i$}}
\put(57,42){\makebox(0,0)[cc]{$\pi_i$}}
\put(8,43){\makebox(0,0)[cc]{$\nu_i^+$}}
\put(95,52){\makebox(0,0)[cc]{$\pi_i^+$}}
\put(52,37){\makebox(0,0)[cc]{$z$}}
\put(40,37){\makebox(0,0)[cc]{$\alpha_{\varepsilon}$}}
\end{overpic}
\caption{Modification of $\alpha$ and definition of the arcs $\nu_i$ and $\pi_i$
for critical contact points $y$}
\label{fig.Lone.Mod}
\end{center}
\end{figure}
\vspace{-4mm}
\noindent

Let $\varepsilon$ be a sufficiently small positive number. Denote by $z$ the
$\varepsilon$-shift of $y$ in the direction of the center of $D_u'$.
Append to $D_v'$ an equilateral open triangular domain $T$ with one vertex at
$z$, two vertices on $\partial D_v'$, and symmetry axis through $y$ and $z$
(see Figure~\ref{fig.Lone.Mod}).

For $y\notin\{y_0,y_l\}$ let $\nu_i$ (and $\pi_i$)
be the largest positively (negatively) oriented subarc of
$\partial (D_v'\cup T)$ which has initial point $z$ and is contained in
$\Omega$. For $y\in\{y_0,y_l\}$ (and only then) it can happen that $y$ is a
boundary point of $G$. Therefore we define $\nu_i:=[z,y]$ in the case $y=y_0$,
and $\pi_i:=[y,z]$ in the case $y=y_l$.
The case $y_0=y_l$ can never occur, because $l\geq 1$.

Denote by $\nu_i^+$ and $\pi_i^+$ the terminal points of $\nu_i$ and $\pi_i$.
Clearly, $\nu_i^+,\pi_i^+\in\omega$, so let $\nu_i^*, \pi_i^*\in \omega^*$ be
their associated prime ends.
\smallskip\par

We see, that the statement of Lemma~\ref{lem.Lone.NuPiToOmega} holds in the
critical case, too. Moreover, for the critical case, Lemma~\ref{lem.Lone.Interlace}
can be proved in exactly the same way as for the regular case, we
just have to apply the adapted definitions of $\nu_i^*$ and $\pi_i^*$.
All what is missing is the following ``critical'' version of Lemma~\ref{lem.Lone.Crit}.

\begin{lem}[]\label{lem.Lone.Crit2}
Assume that $\partial D_v'$ with $v=v(i)\in U_L^+$ contains a critical contact point
$y\in Y\cap \omega$. Then $v$ is a loner if and only if $\nu_i^*$ and $\pi_i^*$ belong
to $\omega_i^*$.
\end{lem}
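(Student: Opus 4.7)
The plan is to establish both directions by the Jordan-curve strategy of Lemma~\ref{lem.Lone.Crit}, adapted to work with the modified arcs $\nu_i, \pi_i$ that are routed around the triangular domain $T$ appended at the critical contact point $y$.

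For the backward direction ($\Leftarrow$), I would assume $\nu_i^*, \pi_i^* \in \omega_i^*$, so that $\nu_i^+$ and $\pi_i^+$ both lie on $\omega_i \subset \partial D_v$. I would then form a closed Jordan curve $\Gamma$ by concatenating in order: the subarc of the (modified) $\alpha$ from $y_i^-$ to $y_i^+$, routed through the tip $z$ of $T$ whenever the critical point $y$ coincides with one of these endpoints; then $\pi_i$; then a subarc $\delta_i''$ of $\partial D_v$ from $\pi_i^+$ to $\nu_i^+$ taken inside $\overline{D_v}$ (degenerate if these coincide); and finally the reverse of $\nu_i$. The $\alpha$-piece, $\nu_i$ and $\pi_i$ lie in $\overline{\Omega}$, while $\delta_i''$ lies in $\overline{D_v}$, so $\Gamma$ does not meet any disk $D_w$ with $w \in U_L^+ \setminus \{v\}$. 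Each such $D_w$ therefore sits entirely on one side of $\Gamma$; comparing with the alpha-disk, which by admissibility condition~(iii) is centered in $G_L^+$ and so exterior to $\Gamma$, a chain-of-disks argument shows that in fact all such $D_w$ are exterior, whereas $D_v'$ is interior. Hence $D_v' \cap D_w = \emptyset$ for every $w \in U_L^+ \setminus \{v\}$, so $v$ is a loner.

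For the forward direction ($\Rightarrow$), suppose $v$ is a loner. From the preliminary observation that either $D_v' \subset D_{v(j)}$ or $D_{v(j)} \subset D_v'$ at the critical point $y \in \omega_j$, the loner hypothesis $D_v' \cap D_w = \emptyset$ for $w \in U_L^+ \setminus \{v\}$ forces $v = v(j)$, and a local analysis of the three mutually tangent circles at $y$ identifies $j = i$. Next, I would trace the arc $\nu_i$ along $\partial(D_v' \cup T)$ from $z$ and rule out $\nu_i^+ \in \omega_k$ with $k \neq i$ by contradiction: such an exit would make $\nu_i^+$ a point of $\partial D_{v(k)}$ met by $\partial D_v'$ or a side of $T$, and the loner hypothesis would reduce the encounter to an isolated external tangency of $D_v'$ with $D_{v(k)}$. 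A careful local analysis at this tangency, together with $D_{v(k)} \subset G_L^+$, shows that $\partial(D_v' \cup T)$ in fact stays on the $\Omega$-side past $\nu_i^+$, contradicting the role of $\nu_i^+$ as the genuine first exit from $\Omega$ determining a well-defined prime end. Hence $\nu_i^+ \in \omega_i$ and $\nu_i^* \in \omega_i^*$; the argument for $\pi_i^+$ is symmetric.

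The main obstacle is the forward direction and in particular the exclusion of accidental external tangencies between $D_v'$ and disks $D_{v(k)}$ with $k \neq i$: the loner condition does not a priori rule out such tangency points, and one must use the local geometry of $T$ together with the fact that $D_{v(k)} \subset G_L^+$ to argue that such a tangent exit would not deliver a bona fide prime end. A minor shared subtlety is the treatment of turning points of $\omega$, where $\omega_i$ meets $\omega_{i\pm 1}$ and the prime end assignment depends on the direction of approach; the triangle-$T$ construction together with the prime-end definition of $\nu_i^*, \pi_i^*$ handles this cleanly.
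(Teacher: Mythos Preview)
Your approach differs substantially from the paper's and, as you yourself flag, has a genuine gap in the forward direction.

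The paper does not separate the two implications. Instead it exploits the geometric dichotomy established just before the lemma: at the critical point $y\in\omega_j$ the circles $\partial D_v'$ and $\partial D_{v(j)}$ are internally tangent, so either $D_v'\subset D_{v(j)}$ or $D_{v(j)}\subsetneq D_v'$. In Case~1 ($D_v'\subset D_w$ with $w=v(j)$) the triangle sides from $z$ cross $\partial D_w$ immediately near $y$, so $\nu_i^*,\pi_i^*\in\omega_j^*$; hence both lie in $\omega_i^*$ exactly when $j=i$, i.e.\ when $w=v$, and in that situation $D_v'\subset D_v$ makes $v$ a loner trivially. In Case~2 ($D_w\subsetneq D_v'$) the disk $D_v'$ meets $D_w$ and a neighbour of $D_w$, so $v$ is not a loner; simultaneously $\nu_i^*\le y^*\le\pi_i^*$ with $\nu_i^*,\pi_i^*\notin\omega_j^*$, forcing them into distinct classes. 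Both conditions are thus read off directly in each case.

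Your forward direction tries instead to trace $\nu_i$ along $\partial(D_v'\cup T)$ and exclude exits on $\omega_k$ for $k\ne i$ by ruling out ``accidental external tangencies''. This is precisely the difficulty the dichotomy dissolves: once you know $D_v'\subset D_v$ (which your use of the loner hypothesis does give), the sides of $T$ hit $\omega_j\subset\partial D_v$ \emph{before} reaching $\partial D_v'$ at all, so no tracing along $\partial D_v'$ is needed and no tangency analysis arises. Your argument never uses this containment to locate $\nu_i^+,\pi_i^+$ directly, and the ``careful local analysis'' you invoke to exclude first exits at other $\omega_k$ is not carried out; without it the forward direction is incomplete. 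The backward direction via a Jordan curve is plausible but also more delicate than necessary, since in the critical case the dichotomy already tells you that $\nu_i^*,\pi_i^*\in\omega_i^*$ can only occur in Case~1 with $w=v$, whence $D_v'\subset D_v$ and the loner property is immediate.
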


\begin{proof}
We use the notations introduced above, with $\varepsilon>0$ fixed and sufficiently
small. We distinguish two cases.
\smallskip\par

\noindent
Case 1. Let $D_v'\subset D_w$ (see Figure~\ref{fig.Lone.Mod}, left). Then $v$ is a
loner if and only if $w=v$, and this holds, if and only if $j=i$ and
$\nu_i^*,\pi_i^*\in\omega_i^*$.
\smallskip\par

\noindent
Case 2. Let $D_w\subset D_v'$ and $D_w \not= D_v'$ (see Figure~\ref{fig.Lone.Mod},
right). Then $D_v'$ intersects at least two ``upper'' disks (namely $D_w$ and one
of its neighbors), so that $v$ is not a loner. According to our construction, we
have $\nu_i^*\leq y^*\leq\pi_i^*$ (where $y^*\in\omega_j^*$ is the
prime end corresponding to $y$ and $w=v(j)$), but both equalities are never
fulfilled at the same time, and $\nu_i^*,\pi_i^*\notin\omega_j^*$ for $w=v(j)$.
Therefore $\nu_i^*\in\omega_m^*$ and $\pi_i^*\in\omega_n^*$ with $m\leq j\leq n$,
but $m<n$, so the prime ends $\nu_i^*$ and $\pi_i^*$ cannot both belong to the same
class $\omega_i^*$.

\end{proof}

Remark. If $D_v'$ has several critical contact points $y\in Y\cap\omega_j$ with
the same arc $\omega_j$, then $D_v'$ must be tangent to $D_w$ with $w=v(j)$ at
two different points. This implies that $D_v'=D_w$, which explains why the
criterion is independent of the choice of $y$.
\smallskip\par

\noindent
After replacing all critical contact points $y_k$ by the shifted points $z_k$,
and modifying the construction of the curve $\alpha$ accordingly,
Lemma~\ref{lem.Dia.Loner} can be proved completely the same way as in the regular
case.
\smallskip\par

In Section~\ref{sec.Struct} we need the following generalization of
Lemma~\ref{lem.Dia.Loner}. We point out that $v(i)=v(j)$ is allowed in assertion (i).

\noindent
\begin{lem}[]\label{lem.Lone.MoreLone}
Let $D_{v(i)}=D_{v(i)}'$ and $D_{v(j)}=D_{v(j)}'$ with $1\le i\le j\le n$.
Then, in each of the following cases {\rm (i)-(iii)}, there exists a loner $v(k)$ which
is different from $v(i)$ and $v(j)$, such that $k$ satisfies the corresponding
conditions:
\begin{itemize}
\itemsep0mm
\item[{\rm(i)}]
If $1\le i<j-1\le n-1$, then $i<k<j$,
\item[{\rm(ii)}]
If $i>1$, then $1\le k<i$,
\item[{\rm(iii)}]
If $j<n$, then $j<k\le n$.
\end{itemize}
\end{lem}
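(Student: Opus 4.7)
My plan is to adapt the interlacing/induction argument from the proof of Lemma~\ref{lem.Dia.Loner} to an appropriate sub-range of indices, using the hypotheses $D_{v(i)}=D'_{v(i)}$ and/or $D_{v(j)}=D'_{v(j)}$ to fix the boundary behaviour of the indices $l$ and $r$ at the endpoints of that range. The key preliminary observation is that when $D_{v(i)}=D'_{v(i)}$, all contact points of $Y_i$ lie on $\omega_i$, so we are in the critical case; the modified arcs $\nu_i,\pi_i$ wrap around $\partial D_{v(i)}$ inside $\overline{\Omega}$, and Lemma~\ref{lem.Lone.Crit2} (together with the fact that $v(i)$ is automatically a loner) yields $\nu_i^*,\pi_i^*\in\omega_i^*$, with the terminal points landing on the two endpoints of $\omega_i$. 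Consequently, whenever $1<i<n$ we may set $l(i)=i-1$ and $r(i)=i$ (with the obvious adaptations at the extreme indices).

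For case~(i), I would argue by contradiction: suppose no $v(k)$ with $i<k<j$ and $v(k)\neq v(i),v(j)$ is a loner. I shrink the domain $\Omega_0$ used in Lemma~\ref{lem.Lone.Interlace} by replacing its lateral sides $\nu_1,\pi_n$ with $\pi_i,\nu_j$ and trimming $\alpha$ and the top $\omega$-arc accordingly; the rectangle argument then produces the interlacing chain $\pi_i^*\le\nu_{i+1}^*\le\pi_{i+1}^*\le\cdots\le\pi_{j-1}^*\le\nu_j^*$. Starting from $r(i)=i$, I aim to prove inductively that $r(k)\ge k+1$ for $k=i+1,\ldots,j-1$; combined with the interlacing step $l(j)\ge r(j-1)\ge j$, this contradicts $l(j)=j-1$.

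The main obstacle is the base step of this induction. From $l(i+1)\ge r(i)=i$ one only gets $l(i+1)\ge i$, whereas the original induction requires $l(i+1)\ge i+1$. I plan to handle this by splitting into two subcases: if $l(i+1)\ge i+1$, the standard inductive step (non-existence of the loner $v(i+1)$ together with Lemma~\ref{lem.Lone.Crit}) already delivers $r(i+1)\ge i+2$. If $l(i+1)=i$, then $\nu_{i+1}^*$ must coincide with the shared prime end of $\omega_i^*$ and $\omega_{i+1}^*$ and hence also lies in $\omega_{i+1}^*$; were $\pi_{i+1}^*$ also in $\omega_{i+1}^*$, Lemma~\ref{lem.Lone.Crit} would force $v(i+1)$ to be a loner, a contradiction, so $\pi_{i+1}^*\notin\omega_{i+1}^*$, and the inequality $\pi_{i+1}^*\ge\nu_{i+1}^*$ then yields $r(i+1)\ge i+2$. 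From here the induction proceeds verbatim as in Lemma~\ref{lem.Dia.Loner}.

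Cases~(ii) and~(iii) are mirror situations using only one of the two identities. In~(ii), the identity at $v(i)$ gives $l(i)=i-1$, and running the original induction on the range $\{1,\ldots,i-1\}$ with $\Omega_0$ suitably restricted (lateral sides $\nu_1$ and $\nu_i$) produces $l(i)\ge i$, a contradiction. In~(iii), the identity at $v(j)$ gives $r(j)=j$, and the symmetric induction on $\{j+1,\ldots,n\}$ forces $r(n)\ge n+1$, which is impossible since $r(n)\le n$. No additional delicate base step is needed for these two cases, since they use only the original initial conditions $l(1)\ge 1$ or the terminal bound $r(n)\le n$.
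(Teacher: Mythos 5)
Your overall strategy --- rerunning the interlacing induction of Lemma~\ref{lem.Dia.Loner} on the sub-range between $i$ and $j$, with the hypotheses $D_{v(i)}=D_{v(i)}'$ and $D_{v(j)}=D_{v(j)}'$ anchoring the indices $l$ and $r$ at the two ends --- is exactly the route the paper takes, and your attention to the base step (the paper simply asserts $l(i+1)\ge i+1$) is reasonable. But there is a genuine gap: the lemma demands a loner $v(k)$ \emph{different from} $v(i)$ and $v(j)$, and your contradiction hypothesis accordingly excludes only loners $v(k)$ with $v(k)\notin\{v(i),v(j)\}$. The inductive step, however, needs the stronger statement that \emph{no} $v(k)$ with $i<k<j$ is a loner, because the passage from $l(k)\ge k$ to $r(k)\ge k+1$ rests on the contrapositive of Lemma~\ref{lem.Lone.Crit}, i.e.\ on $v(k)$ not being a loner. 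Since $k\mapsto v(k)$ need not be injective, an intermediate index $k$ may satisfy $v(k)=v(i)$ or $v(k)=v(j)$; such a $v(k)$ \emph{is} a loner (because $D_{v(k)}'=D_{v(k)}$ and $\mathcal{P}$ is univalent), your hypothesis does not rule it out, and the induction stalls there. The paper closes this by a separate iteration: first produce \emph{some} loner $v(k)$ with $i<k<j$ (no distinctness required), and if $v(k)\in\{v(i),v(j)\}$ replace $i$ or $j$ by $k$ and repeat on the strictly smaller interval; termination follows from $v(m-1)\ne v(m)$ and $v(m)\ne v(m+1)$. Your write-up contains no counterpart of this step, and the same omission affects cases (ii) and (iii).

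A secondary concern: your anchoring values $l(i)=i-1$, $r(i)=i$, and the claim that the terminal points of $\nu_i,\pi_i$ land on the \emph{endpoints} of $\omega_i$, do not look right. When $D_{v(i)}'=D_{v(i)}$ the modified arcs terminate on $\omega$ immediately next to the (critical) contact points $y_i^-,y_i^+$, which are in general \emph{interior} points of the arc(s) of $\partial D_{v(i)}$ bordering $\Omega$; this gives $\nu_i^*,\pi_i^*\in\omega_i^*$ (so $l(i)=r(i)=i$ when $v(i)$ is simple) but not $l(i)=i-1$. This undermines your subcase ``$l(i+1)=i$ forces $\nu_{i+1}^*$ to be the shared prime end of $\omega_i^*$ and $\omega_{i+1}^*$'': from $\nu_{i+1}^*\ge\pi_i^*$ with $\pi_i^*$ an interior prime end of $\omega_i^*$ you can only conclude $\nu_{i+1}^*\in\omega_k^*$ for some $k\ge i$, and $\nu_{i+1}^*$ may sit strictly inside $\omega_i^*$ without lying in $\omega_{i+1}^*$. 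Likewise the contradiction you propose for case (ii), ``$l(i)\ge i$ contradicts $l(i)=i-1$'', evaporates once $l(i)=i$. These endpoint anchors need to be derived from the actual geometry of the arcs $\nu_i,\pi_i$ (or the argument restructured so that the contradiction lands on $r(j)\ge j+1$ versus $r(j)\le j$, as in the paper), not read off from the interlacing inequalities alone.
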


\begin{proof}
The proof differs only slightly from the proof of Lemma~\ref{lem.Dia.Loner}.
For example, in order to prove (i) we need only replace the first inequality
$l(1)\ge 1$ by $l(i+1)\ge i+1$ (which follows from $D_{v(i)}=D_{v(i)}'$) and,
assuming that no loner $v(k)$ with $i<k<j$ exists, proceed inductively for
$k=i+1,\ldots,j$ until we arrive at $r(j)\ge j+1$. The last condition
contradicts $D_{v(j)}=D_{v(j)}'$.

If $v(k)=v(i)$ or $v(k)=v(j)$, we repeat the procedure, replacing $i$ (in the first
case) or $j$ (in the second case) by $k$, respectively. Iterating this a number of
times, if necessary, we eventually find a loner $v(k)$ which is different from
$v(i)$ and $v(j)$, because for all $m=2,3,...,n-1$ we have $v(m-1)\neq v(m)$ and
$v(m)\neq v(m+1)$.
\end{proof}

\section{Structure of Upper Neighbors} \label{sec.Struct}

In this section we analyze the structure of the set of upper neighbors $U_L^+$
and its subset of loners in more detail.
\smallskip\par

Two consecutive (non-oriented) edges $e_{j-1}$ and $e_j$ of $L=(e_0,\ldots,e_l)$ can
be represented as $e_{j-1}=e(u,v)$ and $e_j=e(v,w)$. The third edge of
the face $f(u,v,w)$ is considered as oriented from $u$ to $w$, and we set
$e_j^0:=\langle u,w\rangle$. The set of edges $e_j^0$ splits into
two classes. We define $E_L^-$ as the set of those $e_j^0$ where the face
$\langle u,v,w\rangle$ is oriented counter-clockwise, whereas $E_L^+$ consists
of those edges with clockwise orientation of $\langle u,v,w\rangle$, respectively.
After renumbering the elements of $E_L^-$ and $E_L^+$, without changing their order,
we get two sequences of oriented edges $E_L^-=\{e_1^-,\ldots,e_p^-\}$ and
$E_L^+=\{e_1^+,\ldots,e_q^+\}$ (with $p+q=l$), which are called the \emph{sequences
of lower} and \emph{upper accompanying edges} of the crosscut $L$, respectively.
\label{def.Accompany.Edge}

Here are some basic properties of $E^-_L,E^+_L$, which follow quite easy from the
definition of $L$ (proofs are left as exercises).
The \emph{oriented} edges in $E_L^-\cup E_L^+$ are pairwise disjoint;
the corresponding non-oriented edges can appear at most twice, and either both in
$E^-_L$ or both in $E^+_L$. Two consecutive edges $e_{j-1}^\pm$ and $e_j^\pm$ are
linked at a common vertex. The vertex set of all edges in $E_L^+$ is precisely the
set $U_L^+$ of upper neighbors of $L$.

Figure~\ref{fig.Accomp} shows two examples. The involved crosscut on the right
models the fourth generation of the Hilbert curve. With the exception of boundary
edges, all edges in $E_L^-$ (lighter color) and in $E_L^+$ (darker color) appear
with both orientations (not shown in the picture).

\begin{figure}[H]
\begin{center}
\begin{overpic}{Figure14a}
\fontsize{12pt}{14pt}\selectfont
\put(0,48){\makebox(0,0)[rc]{$u$}}
\put(25,55){\makebox(0,0)[rc]{$w$}}
\put(32,23){\makebox(0,0)[cu]{$v$}}
\put(61,32){\makebox(0,0)[lc]{$u'$}}
\put(79.5,55){\makebox(0,0)[cc]{$w'$}}
\put(46.5,57){\makebox(0,0)[cc]{$v'$}}
\put(85,86){\makebox(0,0)[cc]{$E^-_L$}}
\put(0,20){\makebox(0,0)[cc]{$E^+_L$}}
\end{overpic}
\hspace{0.1\textwidth}
\includegraphics{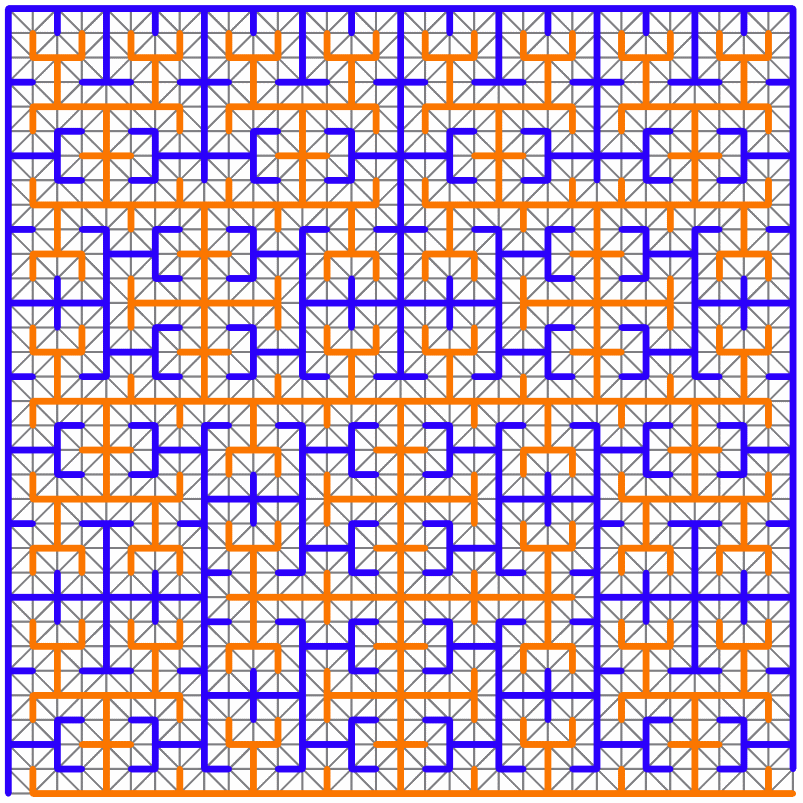}
\caption{The upper and the lower accompanying edges of a crosscut}
\label{fig.Accomp}
\end{center}
\end{figure}
\vspace{-4mm}
\noindent

When we arrange the elements of $U_L^+$ in the order they are met along the edge path
$E_L^+$ we get the \emph{sequence $S_L^+$ of upper accompanying vertices}.
A similar definition is made for the \emph{sequence $S_L^-$ of lower accompanying
vertices}.\label{def.Accompany.Vertex}
The geometry of circle packings causes some combinatorial obstructions for these
sequences.

\begin{lem}[]\label{lem.ForbiddenPatt}
The sequence $S_L^+$ of upper accompanying vertices cannot contain the pattern
$(\ldots,u,\ldots,v,\ldots,u,\ldots, v, \ldots)$ with $u \not=v$.
\end{lem}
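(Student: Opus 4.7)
The plan is to argue by contradiction via a planarity argument in the upper sub-complex. Suppose $S_L^+$ contains the pattern $(u,v,u,v)$ at positions $a<b<c<d$ with $u\ne v$.

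The first step is to pass to a topological model. Since $K$ is a combinatorial closed disk embedded in the plane and the crosscut $L$ is realized in this embedding by a Jordan arc running between two boundary edges, cutting $K$ open along that arc produces a topological disk $\widetilde K^+$ on the upper side together with a natural continuous map $\varphi\colon\widetilde K^+\to\mathbb{R}^2$ which embeds the interior of $\widetilde K^+$ injectively and identifies pairs of boundary points belonging to the same vertex $u\in U_L^+$ whenever $u$ carries more than one fan of crosscut edges in its flower. Reading the combinatorial vertices along the inner boundary of $\widetilde K^+$ (the side produced by the cut) recovers $S_L^+$, each vertex appearing once per such fan. Under this correspondence the interlacing pattern yields four distinct topological points $p_1,p_2,p_3,p_4\in\partial\widetilde K^+$ in cyclic order with $\varphi(p_1)=\varphi(p_3)=u$ and $\varphi(p_2)=\varphi(p_4)=v$. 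I would then select a Jordan arc $\widetilde\gamma\subset\widetilde K^+$ from $p_1$ to $p_3$ lying, except for its endpoints, in the interior of $\widetilde K^+$ and avoiding every other vertex position; this is possible because $\widetilde K^+$ is a topological disk with only finitely many marked vertex points. Its image $\gamma:=\varphi(\widetilde\gamma)$ is then a Jordan loop based at $u$ in the plane, passing through no combinatorial vertex other than $u$, and in particular not through $v$.

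The arc $\widetilde\gamma$ cuts $\widetilde K^+$ into two topological sub-disks $\widetilde D_A\ni p_2$ and $\widetilde D_B\ni p_4$. Since $\varphi$ embeds the interior injectively, the images $D_A:=\varphi(\widetilde D_A)$ and $D_B:=\varphi(\widetilde D_B)$ have disjoint interiors in $\mathbb{R}^2$, and those interiors lie in opposite components of $\mathbb{R}^2\setminus\gamma$. To reach the contradiction I would work locally at $v$: small neighborhoods of $p_2$ in $\widetilde D_A$ and of $p_4$ in $\widetilde D_B$ map under $\varphi$ to angular sectors at $v$ lying in $D_A$ and in $D_B$ respectively, so every sufficiently small open ball around $v$ in the plane contains interior points of both $D_A$ and $D_B$. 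But that ball is disjoint from $\gamma$ and connected, so it lies in a single component of $\mathbb{R}^2\setminus\gamma$, contradicting that the interiors of $D_A$ and $D_B$ lie in opposite components.

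The main obstacle I expect is the structural setup of $\widetilde K^+$ and $\varphi$: verifying carefully that the multiplicities of vertices in $S_L^+$ match the number of preimages on $\partial\widetilde K^+$, and that $\varphi$ is an embedding on the interior. Once that is in place, the Jordan curve separation of $D_A$ from $D_B$ and the local contradiction at $v$ are standard consequences of the Jordan curve theorem.
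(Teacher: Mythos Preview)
Your setup of $\widetilde K^+$ and $\varphi$ does not work as described, and this is a genuine gap rather than a routine verification. When you cut the planar disk $K$ along a Jordan arc realizing $L$ (an arc that crosses the edges of $L$ transversally and runs through the interiors of the faces $f_1,\ldots,f_l$), the upper piece is literally a subset of the plane and $\varphi$ is the inclusion --- hence injective \emph{everywhere}, not only on the interior. The arc passes through no vertex of $K$, so no vertex of $U_L^+$ lies on the topological boundary of $\widetilde K^+$, and there are no distinct boundary points $p_1,p_3$ with $\varphi(p_1)=\varphi(p_3)=u$. The multiplicities in $S_L^+$ record how often the edge walk $E_L^+$ visits a vertex; they are combinatorial data that are simply not visible on the boundary of the cut piece. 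To get a disk whose boundary carries $S_L^+$ with the right multiplicities you would need a genuinely different object (for example the abstract strip obtained by gluing $f_1,\ldots,f_l$ only along $e_1,\ldots,e_{l-1}$, whose realization map is injective on the interior because the $f_j$ are pairwise distinct). Even then, showing that $\varphi(\widetilde D_A)$ and $\varphi(\widetilde D_B)$ land in opposite Jordan components of $\mathbb{R}^2\setminus\gamma$ is not automatic and is exactly where the planarity is used.

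The paper sidesteps all of this by exploiting the circle-packing geometry directly. The maximal crosscut $\omega=J_L^+$ is a concrete planar curve composed of circular arcs $\omega_1,\ldots,\omega_n$ with $\omega_i\subset\partial D_{v(i)}$, and the sequence $(v(1),\ldots,v(n))$ is $S_L^+$. If the pattern $(\ldots,u,\ldots,v,\ldots,u,\ldots)$ occurs, one has $\omega_i,\omega_k\subset\partial D_u$ and $\omega_j\subset\partial D_v$ with $i<j<k$; since $D_u$ and $D_v$ are disjoint disks and $\omega$ bounds the simply connected domain $\Omega$, the portion of $\omega$ between $\omega_i$ and $\omega_k$ traps $\omega_j$, and the remainder $\partial D_v\setminus\omega_j$ is inaccessible to any later arc of $\omega$. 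Thus no second occurrence of $v$ after the second $u$ is possible. This is a three-line argument once $\omega$ is in hand; your route, if repaired, would give a purely combinatorial proof independent of the circle geometry, but the repair is the whole content.
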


\begin{proof}
If the sequence $S_L^+$ contains the pattern $(\ldots,u,\ldots,v,\ldots,u,\ldots)$,
the oriented curve $\omega$ has three subarcs $\omega_i,\omega_j,\omega_k$ with
$i<j<k$ such that $\omega_i, \omega_k \subset \partial D_u$ and
$\omega_j \subset \partial D_v$. But then $\omega$ cannot contain a subarc of
$\partial D_v \setminus \omega_j$ (see Figure~\ref{fig.LoopLone}, left), which
would be necessary to append another $v$ to the sequence.
\end{proof}

\begin{figure}[H]
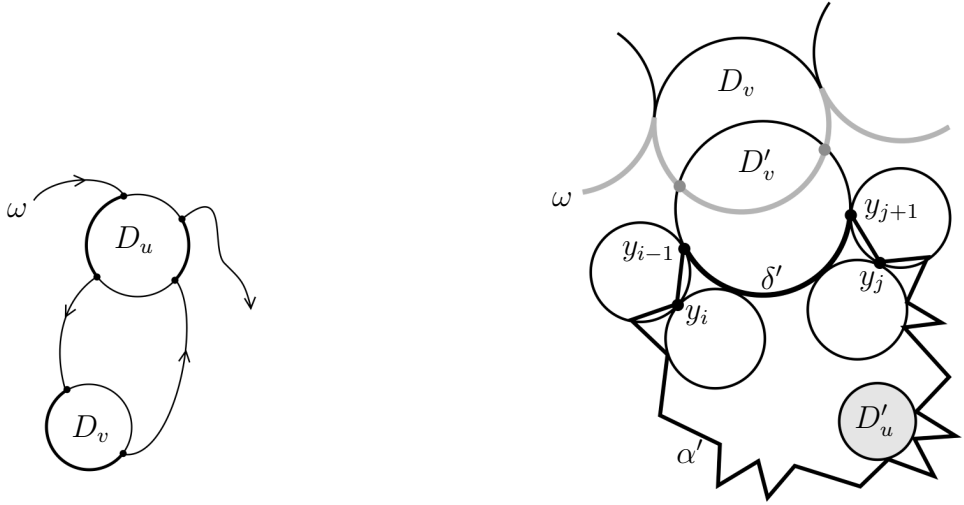

\begin{center}
\begin{overpic}{Figure15a}
\fontsize{12pt}{14pt}\selectfont
\put(20,88){\makebox(0,0)[ru]{$\omega$}}
\put(52,79){\makebox(0,0)[cc]{$D_u$}}
\put(39,22){\makebox(0,0)[cc]{$D_v$}}
\end{overpic}
\hspace{0.15\textwidth}
\begin{overpic}{Figure15b}
\fontsize{12pt}{14pt}\selectfont
\put(11,58){\makebox(0,0)[cb]{$\omega$}}
\put(45,82){\makebox(0,0)[cc]{$D_v$}}
\put(49,66){\makebox(0,0)[cc]{$D'_v$}}
\put(52,44){\makebox(0,0)[cu]{$\delta'$}}
\put(33,49){\makebox(0,0)[rc]{$y_{i-1}$}}
\put(35,36.5){\makebox(0,0)[lu]{$y_{i}$}}
\put(69,43){\makebox(0,0)[lu]{$y_{j}$}}
\put(70,57){\makebox(0,0)[lc]{$y_{j+1}$}}
\put(36,10){\makebox(0,0)[cc]{$\alpha'$}}
\put(72,16){\makebox(0,0)[cc]{$D'_u$}}
\end{overpic}
\caption{Illustrations to Lemma~\ref{lem.ForbiddenPatt} and
Lemma~\ref{lem.Struct.LoopLone}}
\label{fig.LoopLone}
\end{center}
\end{figure}

\begin{defn}\label{def.Struc.MultVert}
A vertex $v\in U_L^+$ which appears only once in the sequence $S_L^+$ is called
\emph{simple}, the other elements in $U_L^+$ are said to be \emph{multiple} vertices.
\end{defn}

If $v$ is a multiple vertex in $U_L^+$, there are sequences
$M:=\{e_i^+,e_{i+1}^+,\ldots,e_j^+\}\subset E_L^+$ of accompanying edges
such that $v$ is the initial vertex of $e_i^+$, as well as the terminal vertex of
$e_j ^+$ with $i<j$. Any such sequence is called a \emph{loop} for $v$.
We say that a loop $M$ \emph{meets a vertex} $u$, if $u$ is adjacent to an edge in
$M$ and $u\neq v$. The \emph{set of vertices} met by $M$ is denoted by $V_M$.
A loop $M$ also generates a \emph{sequence of vertices} $U_M=\{v,v_1,\ldots,v_m,v\}$
when we arrange the elements of $V_M$ in the order they are met along the edge path $M$.
\label{def.Loop}

\begin{lem}[]\label{lem.Struct.LoopVert}
Every loop $M$ of a multiple vertex $v$ meets a simple vertex $u$.
\end{lem}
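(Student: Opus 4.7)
My plan is to argue by contradiction, combining a minimality argument with the forbidden-pattern Lemma~\ref{lem.ForbiddenPatt}. Suppose the lemma failed; I would then pass to a minimal counterexample by choosing, over all multiple vertices and all their loops, a loop $M=(e_i^+,\ldots,e_j^+)$ for some multiple vertex $v$ with the smallest possible number of edges $j-i+1$ and with the property that every vertex in $V_M$ is multiple. Write $U_M=(v,v_1,\ldots,v_m,v)$; since $K$ contains no self-loops and $i<j$, we have $m\ge 1$, so in particular $v_1$ exists and $v_1\neq v$.

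Because $v_1$ is multiple by hypothesis, it must occur in $S_L^+$ at some position other than the one right after the first $v$ of $M$. I would split this extra occurrence into two cases according to whether it lies inside the block of $S_L^+$ traced out by $M$ or outside it.

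In the ``inside'' case, $v_1=v_k$ for some $2\le k\le m$, and then the edges $e_{i+1}^+,\ldots,e_{i+k-1}^+$ constitute a loop $M'$ for the multiple vertex $v_1$ of length $k-1<m+1$. The vertices met by $M'$ are $v_2,\ldots,v_{k-1}$, all of which are multiple since they lie in $V_M$. Thus $M'$ is a strictly shorter counterexample, contradicting the choice of $M$. In the ``outside'' case, the additional occurrence of $v_1$ is either before or after the positions occupied by $M$, and $S_L^+$ therefore contains the pattern $(\ldots,v_1,\ldots,v,\ldots,v_1,\ldots,v,\ldots)$ or $(\ldots,v,\ldots,v_1,\ldots,v,\ldots,v_1,\ldots)$; either is ruled out by Lemma~\ref{lem.ForbiddenPatt}, as $v\neq v_1$. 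Both cases give a contradiction, so the lemma follows.

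The main obstacle is the correct set-up of minimality: the shortened loop $M'$ is a loop for $v_1$ rather than for the original $v$, so the class over which we minimise has to consist of loops of arbitrary multiple vertices, not just of a fixed one. Once the counterexample class is broad enough to be closed under this sub-loop operation, the dichotomy above closes the argument essentially automatically.
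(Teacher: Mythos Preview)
Your argument is correct. It reaches the goal by a different route than the paper, though with the same key ingredient (Lemma~\ref{lem.ForbiddenPatt}). The paper works \emph{directly and locally} inside the given loop $M$: it lets $w$ be the element of $U_M$ whose \emph{second} appearance in $U_M$ comes earliest, picks any $u$ strictly between the first two occurrences of $w$, and observes that by the choice of $w$ no second copy of $u$ can lie between these two $w$'s, while Lemma~\ref{lem.ForbiddenPatt} forbids a second copy outside them; hence $u$ is simple. Thus the paper never leaves the single loop $M$ and exhibits the simple vertex constructively. Your proof, by contrast, is by contradiction and needs a \emph{global} minimisation over loops of arbitrary multiple vertices, precisely so that the sub-loop $M'$ you extract (a loop for $v_1$, not for $v$) remains in the class---a point you rightly flag as the main obstacle. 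What your approach buys is a very mechanical case split; what the paper's buys is that no auxiliary minimisation over other vertices is needed.

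One small remark worth making explicit in your inside case: the intermediate vertices $v_2,\ldots,v_{k-1}$ could in principle include $v$ itself, which does not lie in $V_M$. This is harmless, since $v$ is multiple by hypothesis; alternatively, the minimality of $M$ already rules out any interior occurrence of $v$ in $U_M$ (an interior $v$ would yield a strictly shorter loop for $v$ meeting only vertices of $V_M$, hence a smaller counterexample). Either observation completes the argument.
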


\begin{proof}
We consider the sequence $U_M=\{v,v_1,\ldots,v_m,v\}$ of vertices in $V_M$, arranged
in the order as they are met by the edge path $M$. Let $w$ denote the element of
this sequence with the earliest second appearance (this does \emph{not} mean the
first element which appears twice). Since $w$ cannot appear twice in direct
succession, there exists a vertex $u$ in between the first two symbols $w$.

In order to show that $u$ is a simple vertex, we remark that $U_M$ is a subsequence
of the sequence $S_L^+$ of upper accompanying vertices. By definition of $w$, there
cannot be a second $u$ in $S_L^+$ between the two symbols $w$ next to $u$, and by
Lemma~\ref{lem.ForbiddenPatt}, the sequence $S_L^+$ cannot contain a second $u$ outside
these two $w$\,s.
\end{proof}

Since loners are vertices in $U_L^+$, it makes sense to speak of simple and multiple
loners.

\begin{lem}[]\label{lem.Struct.LoopLone}
Let $v$ be a multiple loner with $D_v'\not=D_v$. If $u\neq v$ is a vertex which is met
by a loop of $v$, then $u$ is a loner and $D_u'\cap D_u=\emptyset$.
\end{lem}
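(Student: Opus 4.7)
The strategy is to reduce the statement to a local sub-problem attached to the loop $M$, on which the existence of loners (Lemma~\ref{lem.Dia.Loner}) and its iterative refinement (Lemma~\ref{lem.Lone.MoreLone}) can be applied. Suppose $M$ corresponds to the block of consecutive arcs $\omega_{i+1},\ldots,\omega_j$ of $\omega$, with flanking arcs $\omega_i,\omega_{j+1}\subset\partial D_v$, and write $y_i,y_j\in\partial D_v'$ for the contact points of $\mathcal{P}'$ at the endpoints of $M$. By Lemma~\ref{lem.Loner.Property}, $D_v'\cap G_L^+\subset D_v$, so the subarc $\delta'$ of $\partial D_v'$ joining $y_i$ to $y_j$ through $\overline{G_L^-}$ is well defined (cf.\ Figure~\ref{fig.LoopLone}, right). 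The corresponding subarc $\alpha'$ of $\alpha$ from $y_i$ to $y_j$ lies in $\overline{G_L^-}$ by Lemma~\ref{lem.AlphaArc}, and Lemma~\ref{lem.Geo.Glue} then shows that $\delta'\cup\alpha'$ bounds a simply connected sub-domain $\widetilde{\Omega}\subset G_L^-$ with locally connected boundary.

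Next, I would assemble an auxiliary admissible pair $(\widetilde{\mathcal{P}},\widetilde{\mathcal{P}}')$ on the sub-domain $\widetilde{G}\subset G$ obtained by attaching $\widetilde{\Omega}$, across the arcs $\omega_{i+1},\ldots,\omega_j$, to the band containing the loop disks $\{D_{v_k}:v_k\in V_M\}$. The sub-packing $\widetilde{\mathcal{P}}$ consists of the disks of $\mathcal{P}$ for the vertices in $V_M$ together with their lower-side partners along $M$, and $\widetilde{\mathcal{P}}'$ is the corresponding restriction of $\mathcal{P}'$. The sub-crosscut $\widetilde{L}$ is the subsequence of $L$ traversed by $M$, and the sub-alpha-vertex may be taken to be any simple vertex of $V_M$, whose existence is guaranteed by Lemma~\ref{lem.Struct.LoopVert}. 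Condition (ii) of Definition~\ref{def.Chap} holds for the sub-problem automatically because $v$ is a loner: its primed sub-lower disks are confined to $\widetilde{\Omega}$. Applying Lemma~\ref{lem.Dia.Loner} produces a loner of the sub-problem, and iterating Lemma~\ref{lem.Lone.MoreLone} shows that every $u\in V_M$ is a loner in the sub-problem, and hence also in the original, since the sub-upper neighbors form a subset of $U_L^+$.

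The disjointness $D_u'\cap D_u=\emptyset$ is stronger than the loner property and needs a further step: it amounts to $D_u'\subset\widetilde{\Omega}\subset G_L^-$, whereas $D_u\subset\overline{G_L^+}$ lies on the opposite side of $\omega$. I would derive this from the iterative sub-admissibility, exploiting that the hypothesis $D_v'\neq D_v$ rules out the alternative $D_u'\subset D_u$ (which, by Lemma~\ref{lem.Loner.Property} applied inside the sub-problem, is the only other option for a loner $u$), since otherwise one could iterate the construction with $u$ playing the role of $v$ and reach a contradiction with the cyclic arrangement of $M$. The principal obstacles are twofold: first, rigorously verifying that $\widetilde{L}$ is a combinatorial crosscut of the resulting sub-complex and that $\widetilde{\mathcal{P}}$ fills $\widetilde{G}$ in the sense of Definition~\ref{def.Pack.Fill}, with particular care for the flanking edges at $v$ (which must become boundary edges of the sub-complex); second, pinning down the precise mechanism by which $D_v'\neq D_v$ propagates to force $D_u'\cap D_u=\emptyset$ for every $u\in V_M$, rather than merely for a single loner in the loop.
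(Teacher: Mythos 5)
There is a genuine gap. You correctly identify the key geometric objects --- the subarc $\delta'$ of $\partial D_v'$, the subarc $\alpha'$ of $\alpha$, and the region in $\overline{\Omega}$ they bound --- but you then abandon them in favour of an auxiliary ``sub-admissible pair'', and that construction cannot be made to work. Definition~\ref{def.Chap}(iii) requires the two alpha-disks to have a \emph{common center}, and for your proposed sub-alpha (an arbitrary simple vertex $u$ of $V_M$) there is no reason whatsoever that $D_u$ and $D_u'$ are concentric --- indeed the lemma you are proving asserts they are \emph{disjoint}. So Lemma~\ref{lem.Dia.Loner} is not applicable to the sub-pair. Two further steps fail independently: first, a loner of a sub-problem with a smaller set of upper neighbors satisfies a \emph{weaker} disjointness condition than a loner of the original pair, so ``loner in the sub-problem, hence in the original'' runs in the wrong direction; second, Lemma~\ref{lem.Lone.MoreLone} needs the hypothesis $D_{v(i)}=D_{v(i)}'$ at its reference indices, which is exactly what the assumption $D_v'\neq D_v$ denies you here, and even where it applies it produces \emph{one} additional loner, not the statement that \emph{every} vertex of $V_M$ is a loner. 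Finally, the assertion $D_u'\cap D_u=\emptyset$ is left to an unproven ``iterate and reach a contradiction with the cyclic arrangement'' sketch.

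The missing idea is that no sub-problem is needed: the Jordan curve $\alpha'\cup\delta'$ already does all the work. Since $v$ is a loner with $D_v'\neq D_v$, the endpoints $y_{i-1},y_{j+1}$ of $\delta'$ do not lie in $D_v$, and Lemma~\ref{lem.Loner.Property} forces $\delta'$ to avoid $D_v$ entirely; hence $\alpha'\cup\delta'\subset\overline{\Omega}$. The disk $D_u'$ is \emph{enclosed} by this curve, so it lies in $\overline{\Omega}$ and therefore cannot meet any disk $D_w$ with $w\in U_L^+$ --- this gives in one stroke both that $u$ is a loner and that $D_u'\cap D_u=\emptyset$ (since $D_u$ itself is such an upper disk). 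Your proposal, by contrast, never establishes the containment $D_u'\subset\widetilde{\Omega}$ that you yourself recognize as equivalent to the disjointness claim.
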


\begin{proof}
Let $M$ be a loop of $v$ with $U_M=\{v,v_1,...,v_m,v\}$. Let $i$ be the smallest
index, so that $y_i$ is a contact point of $v_1$, and let $j$ be the largest
index, so that $y_j$ is a contact point of $v_m$. According to the ordering
of $Y$ and $U_M$ (as subsequences of $S^+_L$), $y_{i-1}$ and $y_{j+1}$ are
contact points of $D'_v$. Let $u\in\{v_1,...,v_m\}$ with $u\neq v$.

The disk $D'_u$ is enclosed by the union of the subarc $\delta':=\delta[y_{i-1},y_{j+1}]$
of $D'_v$ and the subarc $\alpha'\subset\alpha$ which connects the points
$y_{i-1}$ and $y_{j+1}$ on $\alpha$ (see Figure~\ref{fig.LoopLone}). Since $v$ is a
loner with $D'_v\neq D_v$, it is clear that
$y_{i-1},y_{j+1}\notin D_v$, and hence either $D_v'\cap D_v=\emptyset$ or
$\partial D_v' \cap\partial D_v$ consists of one or two points.
In both cases $\delta'$ does not intersect $D_v$.
Therefore the union $\alpha'\cup\delta'$ is contained in $\overline{\Omega}$,
hence $u$ is a loner. In particular $D_u'\cap D_u=\emptyset$, which proves the
last assertion.
\end{proof}

Combining Lemma~\ref{lem.Dia.Loner}, Lemma~\ref{lem.Lone.MoreLone} (applied recursively),
Lemma~\ref{lem.Struct.LoopVert} and Lemma~\ref{lem.Struct.LoopLone} (applied recursively),
the essence of this section can be summarized in the following lemma.

\begin{lem}[]\label{lem.Struct.SimpLone}
Let $(\mathcal{P},\mathcal{P}')$ be an admissible pair of circle packings with crosscut
$L$.
\begin{itemize}
\itemsep0mm
\item[{\rm(i)}]
The pair $(\mathcal{P},\mathcal{P}')$ contains a simple loner $v\in U_L^+$.
\item[{\rm(ii)}]
Every loop of a multiple loner $v$ meets a simple loner $u$, and if $D_v'\not=D_v$ then
$D_u'\not=D_u$.
\end{itemize}
\end{lem}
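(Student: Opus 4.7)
The plan is to deduce (i) directly from (ii) together with Lemma~\ref{lem.Dia.Loner}: that lemma furnishes some loner $v\in U_L^+$, which is either already simple, or else, by (ii) applied to any one of its loops, yields a simple loner met by that loop. So the whole task reduces to proving (ii).

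Fix a multiple loner $v$ and a loop $M=(e_i^+,\ldots,e_j^+)\subset E_L^+$ of it, so $v(i)=v(j)=v$; since the two endpoints of a single edge in $E_L^+$ differ, consecutive entries of $S_L^+$ differ, and hence $j\ge i+2$. In Case~A, where $D_v'\neq D_v$, Lemma~\ref{lem.Struct.LoopVert} produces a simple vertex $u\neq v$ met by $M$, and Lemma~\ref{lem.Struct.LoopLone} immediately promotes $u$ to a loner with $D_u'\cap D_u=\emptyset$, so in particular $D_u'\neq D_u$. This settles both clauses of (ii) in Case~A.

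In Case~B, where $D_v'=D_v$, I apply Lemma~\ref{lem.Lone.MoreLone}(i) to the indices $i<j$ and obtain a loner $v(k)$ with $i<k<j$ and $v(k)\neq v$. If $v(k)$ is simple I am done (it is met by $M$), so assume $v(k)$ is multiple, with a second occurrence at some index $k'\neq k$. The crucial structural observation is that Lemma~\ref{lem.ForbiddenPatt} forces $i<k'<j$: any $k'$ outside $[i,j]$ would, together with the positions $i,k,j$ carrying $v, v(k), v$, realise the forbidden alternating pattern on the two vertices $v$ and $v(k)$. Thus the loop of $v(k)$ is strictly contained in $(i,j)$ and strictly shorter than $M$. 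Now recurse: at each stage I hold a multiple loner $w$ with a loop inside $(i,j)$. If $D_w'\neq D_w$, Case~A applied to the sub-loop yields a simple loner met by it, hence by $M$. Otherwise $D_w'=D_w$ and Lemma~\ref{lem.Lone.MoreLone}(i) again returns either a simple loner or a strictly shorter sub-loop of a new multiple loner, handled inductively by the same forbidden-pattern argument. Since loop lengths are strictly decreasing positive integers, the recursion terminates in a simple loner whose index lies strictly between $i$ and $j$, so it is met by $M$.

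The main obstacle is the recursion in Case~B: one has to verify at every step that Lemma~\ref{lem.ForbiddenPatt} really does confine all duplicates of the newly produced loner to the current sub-loop, so that the index range actually contracts. Once this bookkeeping is set up, the rest of the argument is just the four lemmas flagged in the text plus a finite induction on loop length.
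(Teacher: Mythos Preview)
Your proof is correct and follows essentially the same approach as the paper, which merely states that the lemma follows by combining Lemmas~\ref{lem.Dia.Loner}, \ref{lem.Lone.MoreLone}, \ref{lem.Struct.LoopVert} and \ref{lem.Struct.LoopLone} (the latter two applied recursively). You have accurately fleshed out that sketch: Case~A is handled by Lemmas~\ref{lem.Struct.LoopVert} and~\ref{lem.Struct.LoopLone}, Case~B by Lemma~\ref{lem.Lone.MoreLone}(i) together with the forbidden-pattern Lemma~\ref{lem.ForbiddenPatt} to confine successive sub-loops, and the descent on loop length guarantees termination with a simple loner inside the original loop.
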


\section{Proof of the Main Theorem} \label{sec.Proof}

After all these preparations we are eventually in a position to prove
Theorem~\ref{thm.CircRigid}.
To begin with, we use the concept of loners and combinatorial surgery
to modify the crosscut $L$. In every step of this procedure the number
of vertices in $V_L^+$ is reduced. At the end we get a special combinatorial
structure which is called a slit. Roughly speaking, this is a chain of vertices
connecting the alpha-vertex with a boundary vertex. We shall prove that
the disks of both packings coincide along a slit.

Then a subdivision procedure generates a sequence of slits, such that any
accessible boundary vertex appears among their end points.
So we get $D_v'=D_v$ for all accessible $v\in \partial V$, and finally a
well-known theorem tells us that $D_v'=D_v$ for all accessible $v\in V$.

\subsection{Combinatoric Reduction} \label{sec.CombiRed}

Let $L$ be a combinatoric crosscut of the complex $K$. In this section we describe
how a simple vertex $v\in U_L^+$ can be ``shifted'' from $V_L^+$ to $V_L^-$ such
that we get a new crosscut $L'$ with $\big|V_{L'}^+\big|<\big|V_L^+\big|$.
Depending on the properties of $v$ we distinguish three cases.
\medskip\par

\noindent
\label{page.Cases}%
{\bf Case~1.} Let $v\in U_L^+$ be a simple interior vertex.
\smallskip\par

\noindent
{\bf Case~2.} Let $v\in U_L^+$ be a simple boundary vertex, and assume that neither
the initial nor the terminal edge of $L$ are adjacent to $v$.
\smallskip\par

\noindent
{\bf Case~3.} Let $v\in U_L^+$ be a simple boundary vertex, and assume that
either the initial or the terminal edge of $L$ are adjacent to $v$.
\medskip\par

\noindent
Remark. The case where the initial \emph{and} the terminal edge of $L$ are adjacent to
$v$ cannot appear. Indeed, otherwise either $v$ is a multiple vertex (which is
not considered) or all edges adjacent to $v$ must belong to $L$. The latter implies
that $v$ is the only vertex in $V_L^+$, which is not allowed.
\medskip\par

\noindent
{\bf Reduction of Type~1.} In order to modify the crosscut $L=(e_0,e_1,\ldots,e_l)$
in Case~1, we consider the flower $B=B(v)$ of $v$.
Since $v$ is simple, the set of edges adjacent to $v$ consists
of a subsequence $S=(e_i,\ldots,e_j)$ (with $0\le i\le j\le l$) of $L$ and a
complementary sequence, which we denote by $S'=(e_1',\ldots,e_k')$ (with $k\ge 1$).
Replacing in $L$ the sequence $S$ by $S'$, we get a new edge sequence
\[
L' = (e_0, \ldots, e_{i-1},e_1',\ldots,e_k',e_{j+1},\ldots,e_l).
\]
The reader can easily convince herself (see Figure~\ref{fig.CombRed1}, left),
that the sequence $L'$ is a crosscut for $K$ with $\big|V_{L'}^+\big|<\big|V_L^+\big|$.

\begin{figure}[H]
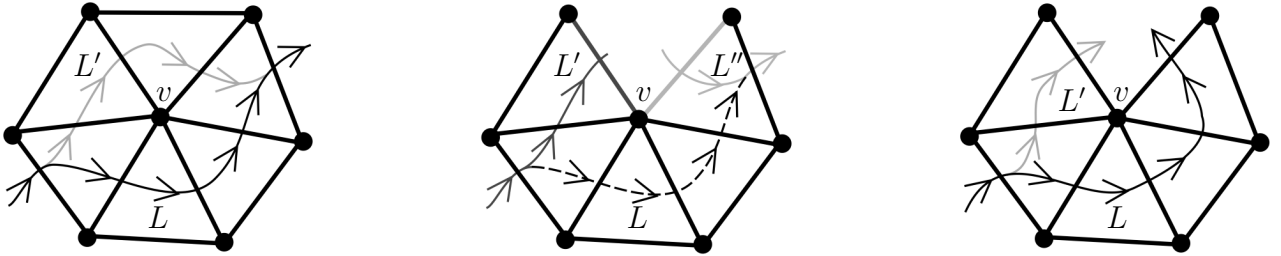

\begin{center}
\begin{overpic}{Figure16a}
\fontsize{12pt}{14pt}\selectfont
\put(52,62.5){\makebox(0,0)[cc]{$v$}}
\put(50,24){\makebox(0,0)[cc]{$L$}}
\put(28,72){\makebox(0,0)[cc]{$L'$}}
\end{overpic}
\hfill
\begin{overpic}{Figure16b}
\fontsize{12pt}{14pt}\selectfont
\put(52,62.5){\makebox(0,0)[cc]{$v$}}
\put(50,24){\makebox(0,0)[cc]{$L$}}
\put(28,72){\makebox(0,0)[cc]{$L'$}}
\put(78,72){\makebox(0,0)[cc]{$L''$}}
\end{overpic}
\hfill
\begin{overpic}{Figure16c}
\fontsize{12pt}{14pt}\selectfont
\put(51.5,62.5){\makebox(0,0)[cc]{$v$}}
\put(50,24){\makebox(0,0)[cc]{$L$}}
\put(36,61){\makebox(0,0)[cc]{$L'$}}
\end{overpic}
\caption{Modification of the crosscut $L$ in Case~1 (left), Case~2 (middle) and
Case~3(right)}
\label{fig.CombRed1}
\end{center}
\end{figure}
\vspace{-2mm}

\noindent
{\bf Reduction of Type~2.}
In Case~2 the flower of $v$ is incomplete. Nevertheless, the edges in $L$ which are adjacent
to $v$ form again a sequence of consecutive edges in this incomplete flower, because
$v$ is simple. However, the local modification of $L$ in a neighborhood of $v$ described
above does not result in a crosscut $L'$, since the complementary sequence $S'=S'_1\cup S'_2$
consists of exactly two connected components $S'_1=(e_1',\ldots,e_k')$ and
$S'_2=(e_1'',\ldots,e_m'')$ (see Figure~\ref{fig.CombRed1}, middle).
Replacing in $L$ the sequence $S$ by $S'_1$ or $S'_2$, we get a new edge sequence $L'$ or
$L''$, respectively, with
\[
L' = (e_0, \ldots, e_{i-1},e_1',\ldots,e_k').\\
L'' = (e_1'',\ldots,e_m'',e_{j+1},\ldots,e_l).
\]
Both $L'$ and $L''$ are new crosscuts of $K$, but only one ($L'$, say) contains $v_\alpha$
among its upper vertices, so we choose this one as the new crosscut. Clearly
$\big|V_{L'}^+\big|<\big|V_L^+\big|$.

\smallskip\par

\noindent
{\bf Reduction of Type~3.}
If either the initial or the terminal edge of $L$ are adjacent to $v$,
then the Type~1 reduction applied to the incomplete flower of $v$ results in an
admissible crosscut $L'$, which has one vertex (namely $v$) less in $V_{L'}^+$ than
in $V_{L}^+$ (see Figure~\ref{fig.CombRed1}, right).
\smallskip\par

\label{page.remark}
Remark. No matter which type of reduction we used, the sets $U^-_L$ and
$U^-_{L'}$ of lower neighbors before and after the reduction,
respectively, always fulfill $U^-_{L'}\setminus U^-_L=\{v\}$.
\smallskip\par

\noindent
In order to not lose the normalization, we will only reduce vertices different
from $v_\alpha$. This leads to a situation where none of the above reductions
can be applied, namely when $v_\alpha$ is the only simple vertex in $U_L^+$.
This special case will be explored in Section~\ref{sec.Slits}.

\subsection{Slits} \label{sec.Slits}

The next definition and the following lemma describe the situation
when all but exactly one vertex of $V$ are multiple.

\begin{defn}\label{def.Slit}
A combinatoric \emph{slit} of the complex $K=(V,E,F)$ is a sequence
$S=(v_1,v_2,\ldots,v_{s})$ of vertices in $V$ which satisfies the
following conditions (i)--(iv):
\begin{itemize}
\itemsep0mm
\item[{\rm(i)}]
The vertices of $S$ are pairwise different, $v_j\not=v_k$ if $1\le j<k\le s$.
\item[{\rm(ii)}]
For $j=1,\ldots,s-1$, the edges $e_j:=e(v_{j},v_{j+1})$ belong to $E$.
\item[{\rm(iii)}]
For $j=1,\ldots,s$, the vertices $v_{j-1}$ and $v_{j+1}$ are the only neighbors of
$v_j$ in $K$ which belong to $S$ (where $v_{0}:=\emptyset$ and $v_{s+1}:=\emptyset$).
\item[{\rm(iv)}]
The vertex $v_1$ is a boundary vertex, and $v_j$ are interior vertices for
$j=2,\ldots,s$.
\end{itemize}
The vertices $v_1$ and $v_s$ are referred to as the \emph{initial vertex} and the
\emph{terminal vertex} of $S$, respectively.
The sequence $E_S:=(e_1,\ldots,e_{s-1})$ (see (ii)) is said to be the
\emph{edge sequence} of $S$. Note that all $e_j$ are interior edges.
\end{defn}

\begin{lem}[]\label{lem.CutToSlit}
Assume that the interior vertex $v$ is the only simple vertex in $U_L^+$.
Then the sequence of upper accompanying vertices $S_L^+$ has the symmetric form
$(v_1,\ldots,v_{s-1},v,v_{s-1},\ldots,v_1)$ and $S=(v_1,\ldots,v_{s-1},v)$ is a slit.
\end{lem}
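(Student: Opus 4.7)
The plan is to first establish the palindromic structure of $S_L^+$ and then verify the four defining conditions of a slit. Since $v$ is the unique simple vertex in $U_L^+$, Lemma~\ref{lem.Struct.LoopVert} applied to any multiple vertex $u$ forces every loop of $u$ to contain $v$; as $v$ itself appears only once in $S_L^+$, no multiple vertex can have more than one loop, so each multiple vertex occurs exactly twice and both of its occurrences straddle the unique occurrence of $v$. For two distinct multiple vertices $u, u'$, Lemma~\ref{lem.ForbiddenPatt} forbids any interleaving pattern, so their occurrence-pairs are nested; together with the straddling property this nesting must be centered at $v$, which forces the palindromic labeling $S_L^+ = (v_1, v_2, \ldots, v_{s-1}, v, v_{s-1}, \ldots, v_1)$.

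For the sequence $S = (v_1, \ldots, v_{s-1}, v)$, conditions (i) and (ii) of Definition~\ref{def.Slit} are immediate from the palindromic structure and from the fact that consecutive vertices in the walk along $E_L^+$ are endpoints of a common edge of $K$. For (iv), I would note that the palindrome closes the $E_L^+$-walk at $v_1$, so $v_1$ is both its first and its last vertex. A direct inspection of the construction of $E_L^+$ from the upper faces along $L$ shows that the first vertex of the walk is the $V_L^+$-endpoint of the initial boundary edge $e_0$ of $L$, and the last is the $V_L^+$-endpoint of the terminal boundary edge $e_l$; hence $v_1$ is a boundary vertex of $K$ (adjacent to both $e_0$ and $e_l$).

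The main technical step is condition (iii). For any $v_j \in U_L^+$ the edges of $L$ incident to $v_j$ are exactly the edges to its $V_L^-$-neighbors (since no edge of $K \setminus L$ may connect $V_L^+$ to $V_L^-$), so the neighbors of $v_j$ in $E(v_j)$ decompose into alternating $V_L^-$- and $V_L^+$-arcs, and each transition face between such arcs contributes one edge of $E_L^+$ incident to $v_j$ whose other endpoint is the adjacent end of the $V_L^+$-arc. For an interior multiple $v_j$ the two interior occurrences give $E_L^+$-degree four and hence exactly two $V_L^-$-arcs; matching the four walk-neighbors against the palindromic pair $\{v_{j-1}, v_{j+1}\}$ with multiplicity two forces each $V_L^+$-arc to consist of a single vertex, namely $v_{j-1}$ or $v_{j+1}$, so the only $V_L^+$-neighbors of $v_j$ in $K$ are these two vertices. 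Since $S \subset V_L^+$, this proves (iii) for such $v_j$; an analogous count at the simple interior $v$ (degree two, single one-vertex $V_L^+$-arc $\{v_{s-1}\}$) and at the boundary $v_1$ (degree two with $b_1, b_d \in V_L^-$ coming from $e_0, e_l$, leaving one middle $V_L^+$-arc $\{v_2\}$) completes (iii), and the same bookkeeping applied to a hypothetical boundary $v_j$ with $2 \le j \le s-1$ would produce three $V_L^+$-arcs and hence three distinct walk-neighbors, incompatible with the palindrome; this excludes such boundary $v_j$ and finishes (iv). The main obstacle I anticipate is making the boundary flower-degree accounting precise at $v_1$, in particular justifying that the endpoints of the $E_L^+$-walk coincide with the $V_L^+$-endpoints of $e_0$ and $e_l$.
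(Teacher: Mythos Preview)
Your route differs from the paper's in two places, and is essentially sound apart from one soft spot.

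For the palindromic form of $S_L^+$, the paper argues directly: if some vertex appeared twice on one side of $v$, a minimal-distance pair of repetitions would (via Lemma~\ref{lem.ForbiddenPatt}) trap another multiple vertex with a forbidden second appearance; this forces exactly one occurrence on each side of $v$, and Lemma~\ref{lem.ForbiddenPatt} again yields the reversed order. You instead invoke Lemma~\ref{lem.Struct.LoopVert}: every loop of a multiple vertex must meet the unique simple vertex $v$, so two disjoint loops are impossible, giving exactly two occurrences straddling $v$; then Lemma~\ref{lem.ForbiddenPatt} forces nesting and hence the palindrome. Both arguments are short; yours has the virtue of reusing an already-proved lemma rather than rerunning its proof idea.

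For conditions~(iii) and~(iv) the paper inspects the passage of $L$ through the (incomplete) flower of $v_j$ and appeals to Figure~\ref{fig.Slit}; your arc/degree bookkeeping is a legitimate and more explicit replacement. The point that needs repair is the exclusion of a boundary $v_j$ with $2\le j\le s-1$. Three $V_L^+$-arcs do arise there, but this does \emph{not} produce ``three distinct walk-neighbors'': the palindrome still only supplies the two vertices $v_{j-1},v_{j+1}$, each twice, so your stated contradiction fails. The correct contradiction is that the four transition endpoints---one each on the two outer $V_L^+$-arcs and two on the middle one---must all lie in $\{v_{j-1},v_{j+1}\}$ with each visit contributing one endpoint to each adjacent arc; any assignment then places the same vertex in two distinct $V_L^+$-arcs of the flower, impossible since a neighbor of $v_j$ lies in exactly one such arc. (The same ``each visit hits two different arcs'' observation is what makes your singleton conclusion for interior $v_j$ go through; you use it implicitly.) Finally, your anticipated obstacle about $v_1$ is not a real problem: following $L$ from $e_0$ until the first face whose shared vertex lies in $V_L^-$ shows that the initial vertex of the $E_L^+$-walk is the $V_L^+$-endpoint of $e_0$, and symmetrically for $e_l$; both are boundary vertices since $e_0,e_l\in\partial E$.
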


\begin{proof}
By definition of a multiple vertex, any vertex in $U_L^+$ except $v$ must appear
at least twice in the sequence $S_L^+$.
If there are vertices which show up twice \emph{at a position left of} $v$,
we choose one, say $u$, whose appearances have minimal distance in the sequence
$S_L^+ = (\ldots, u,\ldots,u,\ldots,v,\ldots)$.
Since neighboring vertices of $S_L^+$ must be different, there exists $w\not=u$ such
that $S_L^+ = (\ldots, u,\ldots,w,\ldots,u,\ldots,v,\ldots)$. Because $v$ is assumed
to be simple and $w$ is a multiple vertex, we have $w\not=v$ and $w$ must appear
again at another place in $S_L^+$.
By Lemma~\ref{lem.ForbiddenPatt} this can only happen in between the two occurrences
of $u$, which is in conflict with the minimal distance property of $u$.

Similarly, the assumption that there exists a vertex which appears in $S_L^+$
twice at a position right of $v$ leads to a contradiction. Hence, with the only
exception of $v$, any vertex of $U_L$ appears in $S_L^+$ exactly once on either
side of $v$. Applying Lemma~\ref{lem.ForbiddenPatt} again, we see that the ordering
of the vertices left of $v$ must be reverse to the ordering on the right of $v$,
so that $S_L^+$ has the symmetric form claimed in the lemma.

\smallskip\par

Moreover we have shown that $v_1,\ldots,v_{s-1},v$ are pairwise different, which is
condition (i) of Definition~\ref{def.Slit}. The second condition (ii) is trivial.

In order to verify condition~(iv), it remains to show that $v_j$ is an interior
vertex for $j=2,\ldots,{s-1}$, because $v_1$ is obviously a boundary vertex, while
$v_s:=v$ is an interior vertex, by assumption.
Assume $v_j$ is a boundary vertex. The flower of $v_j$ is incomplete and it is clear
that $v_{j-1}$ and $v_{j+1}$ are neighbors of $v_j$. On the one hand, the subsequence
$(v_{j-1},v_j,v_{j+1})$ of $S^+_L$ forces the crosscut $L$ to look locally like
shown in Figure~\ref{fig.Slit} left. On the other hand, the subsequence
$(v_{j+1},v_j,v_{j-1})$ of $S^+_L$ forces $L$ to look locally like shown in
the middle of Figure~\ref{fig.Slit}, a contradiction. Hence $v_j$ must be an interior
vertex and its flower must look qualitatively like shown in Figure~\ref{fig.Slit} right.

\begin{figure}[H]
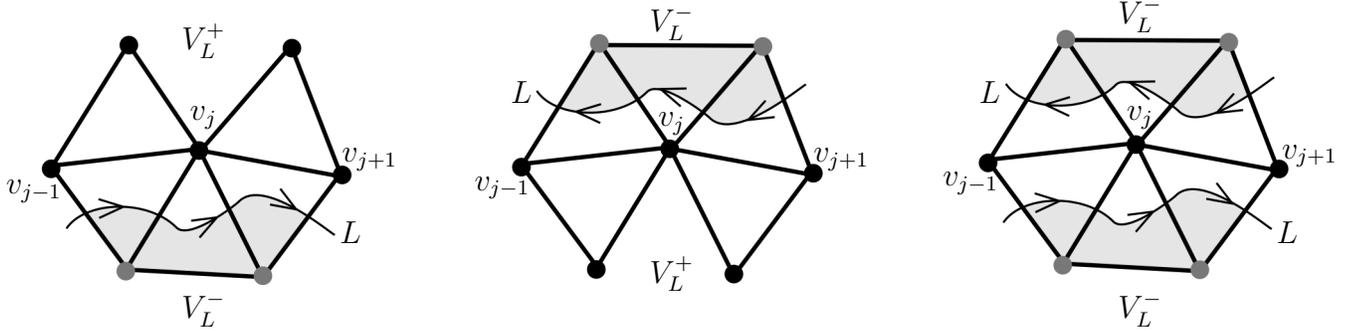

\begin{center}
\begin{overpic}{Figure17a}
\fontsize{12pt}{14pt}\selectfont
\put(52,64){\makebox(0,0)[cc]{$v_j$}}
\put(-1,41){\makebox(0,0)[cc]{$v_{j-1}$}}
\put(104,51){\makebox(0,0)[cc]{$v_{j+1}$}}
\put(52,88){\makebox(0,0)[cc]{$V^+_L$}}
\put(52,4){\makebox(0,0)[cc]{$V^-_L$}}
\put(98,28){\makebox(0,0)[cc]{$L$}}
\end{overpic}
\hspace{0.1\textwidth}
\begin{overpic}{Figure17b}
\fontsize{12pt}{14pt}\selectfont
\put(52,62){\makebox(0,0)[cc]{$v_j$}}
\put(-1,42){\makebox(0,0)[cc]{$v_{j-1}$}}
\put(105,51){\makebox(0,0)[cc]{$v_{j+1}$}}
\put(52,15){\makebox(0,0)[cc]{$V^+_L$}}
\put(52,93.5){\makebox(0,0)[cc]{$V^-_L$}}
\put(5,72){\makebox(0,0)[cc]{$L$}}
\end{overpic}
\hspace{0.1\textwidth}
\begin{overpic}{Figure17c}
\fontsize{12pt}{14pt}\selectfont
\put(52,64){\makebox(0,0)[cc]{$v_j$}}
\put(-1,43.5){\makebox(0,0)[cc]{$v_{j-1}$}}
\put(105,52){\makebox(0,0)[cc]{$v_{j+1}$}}
\put(52,96){\makebox(0,0)[cc]{$V^-_L$}}
\put(52,4){\makebox(0,0)[cc]{$V^-_L$}}
\put(98,28){\makebox(0,0)[cc]{$L$}}
\put(5,72){\makebox(0,0)[cc]{$L$}}
\end{overpic}
\caption{A sequence $S_L^+$ with only one simple interior vertex generates a slit}
\label{fig.Slit}
\end{center}
\end{figure}
\vspace{-4mm}
\noindent

To verify condition~(iii) let $j\in\{2,...,s-1\}$ be fixed. Looking at the behavior
of the crosscut $L$ in the flower of $v_j$, it becomes clear that any edge
$e(v_{j-1},v_{j+1})$ (with the convention $v_s:=v$) belonging to $E$ must be contained
in $L$ twice, a contradiction. Furthermore, all other neighbors of $v_j$ belong
to $V_L^-$ and hence not to $V_L^+ \supset S^+_L$. A similar result can be derived by
looking at the local behavior of $L$ in the flower of $v$ and the incomplete flower
of $v_1$, now using the subsequences $(v_{s-1},v_s,v_{s-1})$ and $(v_1,v_2,...,v_2,v_1)$
of $S^+_L$, respectively.
\end{proof}

The following lemma explains why we are interested in slits.

\begin{lem}[]\label{lem.Upper.Goal}
Let $(\mathcal{P},\mathcal{P}')$ be an admissible pair of circle packings
for the complex $K$ with crosscut $L$ and alpha-vertex $v_\alpha$. Then there exists
a slit $S=(v_1,\ldots,v_{s},v_\alpha)\subset V_L^+$ with terminal vertex $v_\alpha$
such that $D_v'=D_v$ for all $v\in S$.
\end{lem}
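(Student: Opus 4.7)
The plan is to apply the combinatorial reductions from Section~\ref{sec.CombiRed} iteratively to simple loners in $U_L^+$ different from $v_\alpha$, reaching eventually a slit configuration, and then to derive the disk equalities along the slit using the loner results of Section~\ref{sec.Struct}. At each step, Lemma~\ref{lem.Struct.SimpLone}(i) supplies a simple loner $v\in U_L^+$; provided $v\neq v_\alpha$, Reduction of Type~1, Type~2, or Type~3 applies (according to whether $v$ is an interior vertex, a non-extremal boundary vertex, or an extremal boundary vertex of $L$) and produces a new crosscut $L'$ with $|V_{L'}^+|<|V_L^+|$.

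To verify that the new pair remains admissible for $L'$, the remark on page~\pageref{page.remark} reduces the check to the single inclusion $D_v'\subset G_{L'}^-$. Since $v$ is a loner, Lemma~\ref{lem.Loner.Property} yields $D_v'\cap(G_L^+\setminus D_v)=\emptyset$; after the reduction, the disk $D_v$ together with the interstices adjacent to it below the old crosscut are absorbed into $G_{L'}^-$, so the inclusion holds. Conditions (i) and (iii) of admissibility remain intact because $v_\alpha$ is not disturbed. The iteration terminates since $|V_L^+|$ strictly decreases; at termination Lemma~\ref{lem.Struct.SimpLone}(i) still produces a simple loner, so $v_\alpha$ must itself be a simple loner. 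A further combinatorial argument will upgrade this to the claim that $v_\alpha$ is the \emph{only} simple vertex of $U_L^+$, allowing Lemma~\ref{lem.CutToSlit} to deliver the symmetric sequence $S_L^+=(v_1,\ldots,v_{s-1},v_\alpha,v_{s-1},\ldots,v_1)$ and the slit $S=(v_1,\ldots,v_{s-1},v_\alpha)\subset V_L^+$.

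To show $D_{v_\alpha}'=D_{v_\alpha}$, note that both disks share the center $c_\alpha\in G_L^+$ by the normalization (iii). If the radius of $D_{v_\alpha}'$ strictly exceeded that of $D_{v_\alpha}$, the surplus would protrude into $G_L^+\setminus D_{v_\alpha}$, contradicting Lemma~\ref{lem.Loner.Property}. Conversely, if it were strictly smaller, then $\partial D_{v_\alpha}'$ would lie inside the open disk $D_{v_\alpha}\subset G_L^+$; but $v_\alpha\in U_L^+$ has a neighbor $w\in U_L^-$ through some edge of $L$ at $v_\alpha$, and the tangency point of $D_{v_\alpha}'$ with $D_w'\subset\overline{G_L^-}$ would be forced to lie in $G_L^+\cap\overline{G_L^-}=\emptyset$. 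Hence the two radii coincide.

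For $j<s$, the slit vertex $v_j$ is a multiple loner and every loop of $v_j$ in $S_L^+$ traverses the full symmetric sequence, so it necessarily passes through $v_\alpha$. If $D_{v_j}'\neq D_{v_j}$, Lemma~\ref{lem.Struct.SimpLone}(ii) would furnish a simple loner $u$ on that loop with $D_u'\neq D_u$; since $v_\alpha$ is the unique simple loner and $D_{v_\alpha}'=D_{v_\alpha}$, this is impossible. Hence $D_{v_j}'=D_{v_j}$ for every $j<s$, and the proof concludes. The main obstacle will be the fine geometric radius comparison at $v_\alpha$, together with the combinatorial upgrade from ``unique simple loner'' to ``unique simple vertex'' at termination, which is what permits invoking Lemma~\ref{lem.CutToSlit}.
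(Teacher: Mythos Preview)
Your overall architecture matches the paper's: iterate the reductions of Section~\ref{sec.CombiRed} on simple loners $\neq v_\alpha$, check admissibility via Lemma~\ref{lem.Loner.Property} and the remark on page~\pageref{page.remark}, and terminate in the situation where $v_\alpha$ is the only simple loner. Your radius comparison for $D_{v_\alpha}'=D_{v_\alpha}$ is a clean elaboration of what the paper states in one line (``$v_\alpha$ is a loner if and only if $D_\alpha'=D_\alpha$'').

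There is, however, a genuine gap at the point you yourself flag. Knowing that $v_\alpha$ is the only \emph{simple loner} does not give that $v_\alpha$ is the only \emph{simple vertex} of $U_L^+$: a priori there could be simple vertices that are not loners at all. Equally, in your final paragraph you call each slit vertex $v_j$ a ``multiple loner'' and then invoke Lemma~\ref{lem.Struct.SimpLone}(ii), but nothing so far tells you that $v_j$ is a loner; Lemma~\ref{lem.Struct.SimpLone}(ii) applies only to multiple \emph{loners}. Both issues are the same missing step: you must show that \emph{every} vertex of $U_L^+$ is a loner (and hence, since $v_\alpha$ is the only simple one among loners, that all others are multiple).

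The paper closes this gap with Lemma~\ref{lem.Lone.MoreLone}, which you do not invoke. Once $D_{v_\alpha}'=D_{v_\alpha}$, Lemma~\ref{lem.Lone.MoreLone} (with $v(i)=v(j)=v_\alpha$) produces a further loner $v_\mu\neq v_\alpha$; this $v_\mu$ is necessarily multiple, and Lemma~\ref{lem.Struct.SimpLone}(ii) forces $D_{v_\mu}'=D_{v_\mu}$. Feeding $v_\alpha$ and $v_\mu$ back into Lemma~\ref{lem.Lone.MoreLone} and iterating, one propagates the loner property (together with $D_v'=D_v$) to all of $U_L^+$. Only then is $v_\alpha$ the unique simple vertex, Lemma~\ref{lem.CutToSlit} applies, and the equalities $D_{v_j}'=D_{v_j}$ along the slit are already in hand. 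Without this propagation step your argument is circular: you need the slit structure to identify the loops of the $v_j$, but you need the $v_j$ to be loners before you can analyze those loops via Lemma~\ref{lem.Struct.SimpLone}(ii).
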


\begin{proof}
To begin with, we invoke Lemma~\ref{lem.Struct.SimpLone}, which tells us
that the pair $(\mathcal{P},\mathcal{P}')$ has a simple loner $v_\lambda$.
The idea is to use the reduction procedures of the last section to shift
$v_\lambda$ from $V_L^+$ to $V_L^-$ which results in a new crosscut $L'$.

As we remarked earlier (on page~\pageref{page.remark}), the one and only lower
neighbor of $L'$ which has not already been a lower neighbor of $L$ is the
simple loner $v_\lambda$.
Therefore Lemma~\ref{lem.Loner.Property} guarantees that $L'$ is admissible
for $(\mathcal{P},\mathcal{P}')$.
In order to find the appropriate type of reduction we distinguish the following
cases:
\medskip\par

\noindent
{\bf Case~1.} There exists a simple interior loner $v_\lambda$ which is different
from the alpha-vertex $v_\alpha$.
\smallskip\par

\noindent
{\bf Case~2.} There exists a simple boundary loner $v_\lambda$.
\smallskip\par

\noindent
{\bf Case~3.} The only simple loner $v_\lambda$ is the alpha-vertex $v_\alpha$.
\medskip\par

In Case~1 we apply the reduction of Type~1, while in Case~2 either the reduction
of Type~2 or Type~3 can be applied, respectively, depending on whether $v_\lambda$
is adjacent to the initial or the terminal edge of $L$, or not.
In any case we get a new combinatoric crosscut $L'$ of $K$.
Applying the reduction in Case~1 and Case~2 recursively as long as possible, the
number of vertices in $V_L^+$ decays in every step at least by one, so that we
eventually arrive at Case~3.

The alpha-vertex $v_\alpha$ is a loner if and only if $D_\alpha'=D_\alpha$. This
implies, by Lemma~\ref{lem.Lone.MoreLone}, that there exists another loner $v_\mu$.
Since $v_\alpha$ is the only simple loner, $v_\mu$ must be a multiple loner.
If $D_\mu'\not=D_\mu$, then according to Lemma~\ref{lem.Struct.SimpLone} (i),
the vertex set $V_M$ of any loop $M$ of $v_\mu$ contains a simple loner, i.e.,
$M$ meets $v_\alpha$. Because $D_\alpha'=D_\alpha$, assertion (ii) of this lemma
tells us that $D_\mu'=D_\mu$.

Applying Lemma~\ref{lem.Lone.MoreLone} and Lemma~\ref{lem.Struct.SimpLone}
repeatedly in this manner, we see that all vertices in
$U_L^+\setminus \{v_\alpha\}$ must be multiple loners and hence that
$D_v'=D_v$ for all $v\in U_L^+$. Furthermore $v_\alpha$ is the only simple
vertex in $U^+_L$, so, by Lemma~\ref{lem.CutToSlit}, we just constructed a
slit $S\subset V_L^+$ with terminal vertex $v_\alpha$.
\end{proof}

In the next step we are going to construct crosscuts from slits. To begin with,
we introduce some more notations.

\begin{figure}[H]
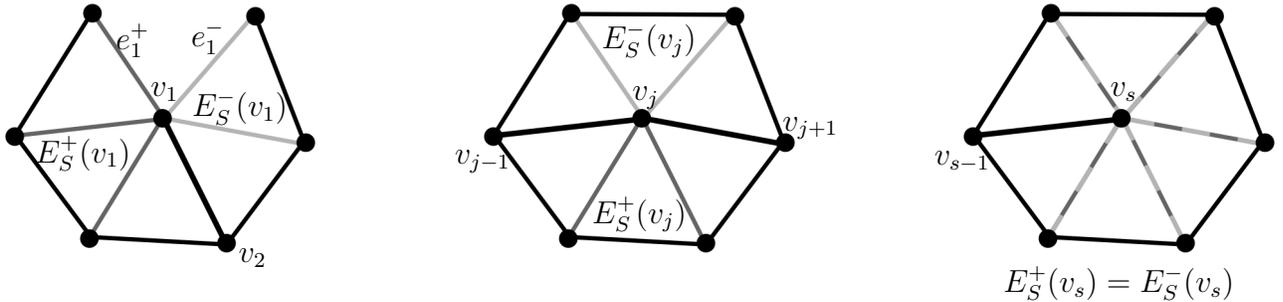

\begin{center}
\begin{overpic}{Figure18a}
\fontsize{12pt}{14pt}\selectfont
\put(51.5,64){\makebox(0,0)[cc]{$v_1$}}
\put(79,12){\makebox(0,0)[cc]{$v_2$}}
\put(75,59){\makebox(0,0)[cc]{$E^-_S(v_1)$}}
\put(26,44.5){\makebox(0,0)[cc]{$E^+_S(v_1)$}}
\put(41.5,80){\makebox(0,0)[cc]{$e^+_1$}}
\put(65,81){\makebox(0,0)[cc]{$e^-_1$}}
\end{overpic}
\hfill
\begin{overpic}{Figure18b}
\fontsize{12pt}{14pt}\selectfont
\put(52,62){\makebox(0,0)[cc]{$v_j$}}
\put(103.5,52.5){\makebox(0,0)[cc]{$v_{j+1}$}}
\put(1,42){\makebox(0,0)[cc]{$v_{j-1}$}}
\put(53,80){\makebox(0,0)[cc]{$E^-_S(v_j)$}}
\put(50,25.5){\makebox(0,0)[cc]{$E^+_S(v_j)$}}
\end{overpic}
\hfill
\begin{overpic}{Figure18c}
\fontsize{12pt}{14pt}\selectfont
\put(51.5,64){\makebox(0,0)[cc]{$v_s$}}
\put(1,42){\makebox(0,0)[cc]{$v_{s-1}$}}
\put(50,4){\makebox(0,0)[cc]{$E^+_S(v_s)=E^-_S(v_s)$}}
\end{overpic}
\caption{The left and right neighboring edges of $v$ in a slit $S$}
\label{fig.CutFromSlit}
\end{center}
\end{figure}
\vspace{-4mm}
\noindent

Let $S=(v_1,\ldots,v_s)$ be a slit. For any vertex $v$ in $S$ we define the subsets
$E_S^-(v)$ and $E_S^+(v)$ of $E(v)$ as follows.\label{def.ESv+-}
For $v=v_1$, the (boundary) vertex $v_1$ has two adjacent boundary edges $e_1^-$ and
$e_1^+$ in $E(v_1)$, such that $e_1^-$ is the predecessor of $e_1^+$ in the chain of
boundary edges. We set (the meaning of the inequalities is explained on
page~\pageref{page.DefEdgeOrd})
\begin{align*}
E_S^-(v_1)&:=\{e\in E(v_1): e(v_1,v_2)< e\le e_1^-\},\\
E_S^+(v_1)&:=\{e\in E(v_1): e_1^+\le e < e(v_1,v_2)\}.
\end{align*}
If $v=v_j$, with $j=2,\ldots s-1$, we define
\begin{align*}
E_S^-(v_j)&:=\{e\in E(v_j): e(v_{j},v_{j+1}) < e < e(v_{j-1},v_j)\},\\
E_S^+(v_j)&:=\{e\in E(v_j): e(v_{j-1},v_j) < e < e(v_j,v_{j+1})\},
\end{align*}
and for the terminal vertex $v_s$ of $S$ we let
\[
E_S^-(v_s)=E_S^+(v_s):=\{e\in E(v_s): e(v_{s-1},v_{s}) < e < e(v_{s-1},v_s)\}.
\]
The edges in
\[
E_S^-:=\medcup_{j=1}^{s-1} E_S^-(v_j) \text{\ and\ }
E_S^+:=\medcup_{j=1}^{s-1} E_S^+(v_j)
\]
are called the
\emph{left} and the \emph{right neighbors of} $S$,
respectively.\label{def.ES+-} Note that condition (iii) in
Definition~\ref{def.Slit} guarantees that every edge $e$ which is
a neighbor of a slit $S$ has \emph{exactly one} adjacent vertex
in $S$.

\smallskip\par

\begin{lem}[]\label{lem.SlitToCut}
If $S=(v_1,\ldots,v_s,v)$ is a slit in $K$, then there exists a combinatoric
crosscut $L$ such that $v\in S_L^+$, and
$S_L^-=(v_1,\ldots,v_{s-1},v_s,v_{s-1},\ldots,v_1)$
is the sequence of \emph{lower accompanying vertices} of $L$.
\end{lem}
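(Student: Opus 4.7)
The plan is to construct the crosscut $L$ explicitly from the slit $S=(v_1,\ldots,v_s,v)$ and then verify the four conditions of Definition~\ref{def.Cuts.Cross} together with the claimed accompanying-vertex structure. Intuitively, $L$ will ``wrap around'' the vertex $v$: it runs alongside the slit on the $+$-side up to $v$, encircles $v$ through the edges opposite to the slit edge $e(v_s,v)$, and returns alongside the slit on the $-$-side. This keeps $v$ in $V_L^+$ while leaving $v_1,\ldots,v_s$ as lower neighbors.

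Concretely, I would define $L$ as the concatenation
\[
E_S^+(v_1),\,E_S^+(v_2),\,\ldots,\,E_S^+(v_s),\,E_S^\pm(v),\,E_S^-(v_s),\,\ldots,\,E_S^-(v_1),
\]
where inside each $E_S^+(v_j)$ the edges are listed in the natural cyclic order of $E(v_j)$ running from the boundary/previous-slit-edge side up to (but not including) $e(v_j,v_{j+1})$, and symmetrically on the $-$-side. The first edge of $L$ is the boundary edge $e_1^+\in E(v_1)$ and the last edge is $e_1^-\in E(v_1)$, so condition~(iv) of Definition~\ref{def.Cuts.Cross} is immediate. Within a single block $E_S^\pm(v_j)$, consecutive edges at $v_j$ automatically share a face of the flower $B(v_j)$, giving condition~(ii); at the transition from $E_S^+(v_j)$ to $E_S^+(v_{j+1})$, the common face is precisely the face $(v_j,v_{j+1},w)$ on the $+$-side of the slit edge $e(v_j,v_{j+1})$, and analogously at the transition from $E_S^+(v_s)$ to $E_S^\pm(v)$ and from $E_S^\pm(v)$ to $E_S^-(v_s)$. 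Condition~(i) (distinctness) follows from the slit property that each $E_S^\pm(v_j)$ consists of edges adjacent to the single slit vertex $v_j$ and omits the slit edges themselves, combined with Definition~\ref{def.Slit}(iii), which rules out edges having both endpoints in $S$. Condition~(iii) holds because three edges all lying in one face would have to be the three sides of a single triangle, but consecutive edges in $L$ either share a common endpoint $v_j$ (in which case at most two of them lie in a given face of the flower of $v_j$) or straddle a slit edge (in which case the shared triangle contains at most one non-slit edge on each side).

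To read off the accompanying vertex sequences, recall that the accompanying vertex at the transition between two consecutive crosscut edges $e(u,v_j),e(v_j,w)$ sharing a face $(u,v_j,w)$ is the third vertex $w$, placed on the lower or upper side according to the orientation. Walking through $L$ from $e_1^+$ to $e_1^-$, at every transition within or between blocks $E_S^+(v_j)$ one checks that the third vertex on the slit-facing side is exactly the corresponding $v_j$, giving the lower-sequence segment $(v_1,\ldots,v_s)$; the transitions inside $E_S^\pm(v)$ contribute $v$ as upper accompanying vertex; the symmetric descent along the $E_S^-(v_j)$ produces the reversed lower-sequence segment $(v_s,v_{s-1},\ldots,v_1)$. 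Concatenating and noting that $v_s$ is shared between the two halves yields exactly the palindrome $S_L^-=(v_1,\ldots,v_{s-1},v_s,v_{s-1},\ldots,v_1)$ and shows $v\in S_L^+$.

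The main obstacle is bookkeeping in the degenerate case where some $E_S^+(v_j)$ or $E_S^-(v_j)$ is empty, which happens precisely when $v_{j-1}$ and $v_{j+1}$ are themselves neighbors in $K$ so that $(v_{j-1},v_j,v_{j+1})$ is a face. In such a situation the naive concatenation would try to include the edge $e(v_{j-1},v_{j+1})$ twice and violate condition~(i). The fix is to include such an edge only once in $L$ and to verify separately that the accompanying-vertex count at this ``collapsed'' transition still contributes exactly one copy of $v_j$ to $S_L^-$ (and nothing new to $S_L^+$), so the palindromic structure is preserved. Once this case is handled, the identification of $S_L^\pm$ is routine and the lemma follows.
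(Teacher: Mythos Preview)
Your construction has a genuine error at the terminal vertex $v$. By ``encircling $v$'' with $E_S^\pm(v)$ you exclude the edge $e(v_s,v)$ from $L$, and this edge is then the only edge in $K\setminus L$ incident with $v$. Consequently, after removing $L$ from $K$, the vertex $v$ stays connected to $v_s$ and hence lies in the \emph{same} component as $v_1,\ldots,v_s$. So your $L$ cannot have $v\in V_L^+$ while $v_1,\ldots,v_s\in V_L^-$. Concretely, the two transitions $E_S^+(v_s)\to E_S^\pm(v)$ and $E_S^\pm(v)\to E_S^-(v_s)$ each produce the slit edge $e(v_s,v)$ as accompanying edge (on the slit side), so the slit-side accompanying sequence becomes $(v_1,\ldots,v_s,v,v_s,\ldots,v_1)$, not $(v_1,\ldots,v_s,v_{s-1},\ldots,v_1)$, and $v$ lands in $S_L^-$ rather than $S_L^+$. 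Your sentence ``the transitions inside $E_S^\pm(v)$ contribute $v$ as upper accompanying vertex'' is also mis-read: for two consecutive crosscut edges sharing the vertex $v$, the accompanying edge is the \emph{opposite} side of the face and its vertices are the \emph{neighbors} of $v$, not $v$ itself.

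The repair is precisely what the paper does: instead of wrapping around $v$, insert the \emph{single} edge $e(v_s,v)$ between the two halves, i.e.\ take
\[
L\;=\;E_S^-(v_1),\ldots,E_S^-(v_s),\ e(v_s,v),\ E_S^+(v_s),\ldots,E_S^+(v_1).
\]
Now $e(v_s,v)\in L$ separates $v$ from $v_s$, and $v$ stays connected to all its non-slit neighbors (none of which are in $L$), so $v\in V_L^+$; the two transitions at $e(v_s,v)$ have $v_s$ as shared vertex and yield accompanying edges $e(a,v)$, $e(v,b)$ on the \emph{upper} side, placing $v$ in $S_L^+$. A side remark: the ``degenerate case'' you flag, where $E_S^\pm(v_j)$ is empty because $v_{j-1}$ and $v_{j+1}$ are neighbors, is in fact excluded by condition~(iii) of Definition~\ref{def.Slit} (applied at $v_{j-1}$), so no special bookkeeping is needed there.
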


\begin{proof}
Walking along the slit $S$ from $v_1$ to $v_{s}$ and back to $v_1$, we build
the crosscut $L$ from the concatenation of the edge sequences
\[
E_{S}^-(v_{1}),\ \ldots,\ E_{S}^-(v_{s}),\ e(v_s,v),\ E_{S}^+(v_{s}),\ \ldots,\
E_{S}^+(v_{1}).
\]
It is easy to see that all edges in $L$ are pairwise different,
so that $L$ satisfies condition (i) of Definition~\ref{def.Cuts.Cross}.
Condition (ii) can easily be verified and (iv) is obvious. In
order to prove (iii) we assume that three edges of $L$ would
form a face of $K$. Since these edges are neighbors of $S$,
exactly one vertex of every edge must belong to $S$, which is
impossible.

The construction also guarantees that the sequence $S_L^-$ of lower accompanying
edges of $L$ has the desired form and that $v$ belongs to $S_L^+$ (see, for
example, Figure~\ref{fig.CutFromSlits}, left).
\end{proof}

\begin{figure}[H]
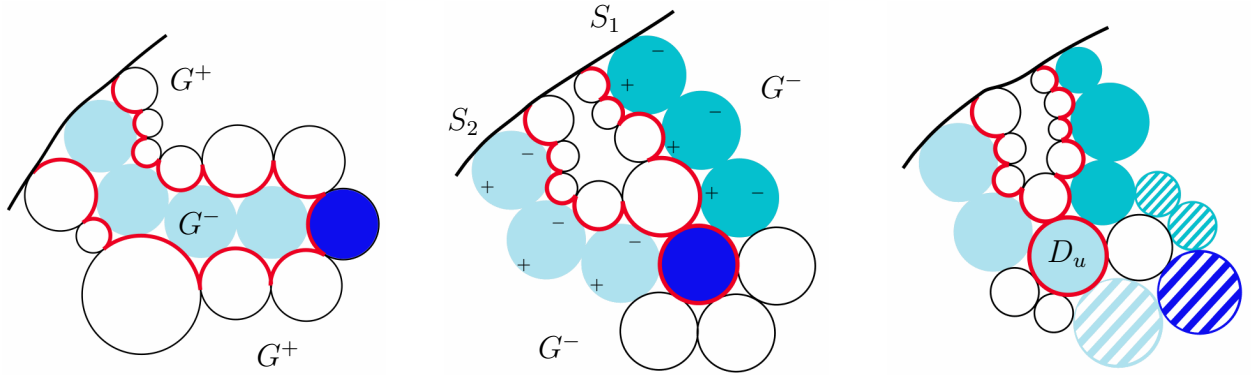

\begin{center}
\begin{overpic}{Figure19a}
\fontsize{12pt}{14pt}\selectfont
\put(51.5,42.5){\makebox(0,0)[cc]{$G^-$}}
\put(72,8){\makebox(0,0)[cc]{$G^+$}}
\put(55,80){\makebox(0,0)[ru]{$G^+$}}
\end{overpic}
\hspace{0.03\textwidth}
\begin{overpic}{Figure19b}
\fontsize{12pt}{14pt}\selectfont
\put(46,92){\makebox(0,0)[rb]{$S_1$}}
\put(9,65){\makebox(0,0)[rb]{$S_2$}}
\put(22,60){\makebox(0,0)[cc]{$_{_-}$}}
\put(11.5,51.5){\makebox(0,0)[cc]{$_{_+}$}}
\put(48,78){\makebox(0,0)[cc]{$_{_+}$}}
\put(56,87){\makebox(0,0)[cc]{$_{_-}$}}
\put(30,42){\makebox(0,0)[cc]{$_{_-}$}}
\put(50,37){\makebox(0,0)[cc]{$_{_-}$}}
\put(21,31){\makebox(0,0)[cc]{$_{_+}$}}
\put(40,26){\makebox(0,0)[cc]{$_{_+}$}}
\put(60,62){\makebox(0,0)[cc]{$_{_+}$}}
\put(70,50){\makebox(0,0)[cc]{$_{_+}$}}
\put(71,71){\makebox(0,0)[cc]{$_{_-}$}}
\put(82,50){\makebox(0,0)[cc]{$_{_-}$}}
\put(30,10){\makebox(0,0)[cc]{$G^-$}}
\put(88,78){\makebox(0,0)[cc]{$G^-$}}
\end{overpic}
\hspace{0.03\textwidth}
\begin{overpic}{Figure19c}
\fontsize{12pt}{14pt}\selectfont
\put(47,34){\makebox(0,0)[cc]{$D_u$}}
\end{overpic}
\caption{Constructing crosscuts from one slit (left) and two slits (middle, right)}
\label{fig.CutFromSlits}
\end{center}
\end{figure}
\vspace{-4mm}
\noindent

A crosscut $L$ can also be constructed from glueing two slits $S_1$ and $S_2$ with a
common terminal vertex $v$. This procedure is somewhat more complicated, in particular
when the
``right side'' of $S_1$ is close to the ``left side'' of $S_2$. In those
cases we cannot glue the cuts at their common terminal vertex $v$, since then the
resulting edge sequence $L$ would contain some edges more than once. Instead we modify
the procedure by glueing $S_1$ and $S_2$ at some appropriately chosen vertex $u$ in
$S_2$ or $S_1$ which has a neighbor in $S_1$ or $S_2$, respectively.
Figure~\ref{fig.CutFromSlits} (middle, right) illustrates
the result, showing an associated circle packing and the related maximal crosscuts.

\begin{lem}[]\label{lem.TwoSlitsToCut}
Let $S_1=(v_{1},\ldots,v_{t},v)$ and $S_2=(w_{1},\ldots,w_{s},v)$ be slits in
$K$ with $S_1\cap S_2=\{v\}$. Assume further that
$E^+_{S_1}(v_1)\cap E^-_{S_2}(w_1)=\emptyset$.
Then there exists a combinatoric crosscut $L$ and a vertex
$u\in(S_1\cup S_2)\cap U_L^+$
such that
\begin{equation} \label{eq.CombRedSLM}
S_L^-=\big(w_{1},w_{2},\ldots,w_{\sigma},u_1,\ldots,u_k,
v_\tau,v_{\tau-1},\ldots,v_1\big), \qquad
1\le \tau\le t,\ 1\le \sigma\le s,
\end{equation}
where
$\big(w_{\sigma},u_1,\ldots,u_k,v_\tau\big)$
is a (positively oriented) chain of neighbors of $u$.
\end{lem}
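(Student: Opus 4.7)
My plan is to adapt the construction from Lemma~\ref{lem.SlitToCut}: walk inward along $S_2$ using its left-neighbor edges, bridge across the top, then walk back outward along $S_1$ using its right-neighbor edges. The naive candidate is
\[
E^-_{S_2}(w_1),\ldots,E^-_{S_2}(w_s),\,e(w_s,v),\,B_v,\,e(v,v_t),\,E^+_{S_1}(v_t),\ldots,E^+_{S_1}(v_1),
\]
where $B_v$ is the positively oriented chain of edges in the flower of $v$ running from $e(w_s,v)$ to $e(v,v_t)$ on the side not meeting $S_1\cup S_2$. When the two slits are ``well separated'' — meaning that no edge lies both in some $E^-_{S_2}(w_i)$ and some $E^+_{S_1}(v_j)$ and no interior vertex of one slit is a neighbor of an interior vertex of the other — this candidate is already the desired crosscut, with $u:=v$, $\sigma:=s$, $\tau:=t$, and $(u_1,\ldots,u_k)$ obtained from the interior vertices of $B_v$.

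The hypothesis $E^+_{S_1}(v_1)\cap E^-_{S_2}(w_1)=\emptyset$ is precisely what is needed so that the initial edge (drawn from $E^-_{S_2}(w_1)$) and the terminal edge (drawn from $E^+_{S_1}(v_1)$) are distinct boundary edges, attached to the distinct boundary vertices $w_1$ and $v_1$. If the slits approach each other somewhere in the interior, the naive candidate would repeat an edge and I must truncate. I choose maximal $\sigma\le s$ and $\tau\le t$ such that the sets $\bigcup_{i\le\sigma}E^-_{S_2}(w_i)$ and $\bigcup_{j\le\tau}E^+_{S_1}(v_j)$ are disjoint and such that there exists a common neighbor $u\in S_1\cup S_2$ of $w_\sigma$ and $v_\tau$ sitting on the upper side of both walks. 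I then let $(u_1,\ldots,u_k)$ be the positively oriented chain of vertices lying strictly between $w_\sigma$ and $v_\tau$ in the cyclic ordering of $E(u)$ on the side opposite to $S_1\cup S_2$, and take $B_u$ to be the corresponding chain of edges in the flower of $u$. The resulting crosscut is
\[
E^-_{S_2}(w_1),\ldots,E^-_{S_2}(w_\sigma),\,B_u,\,E^+_{S_1}(v_\tau),\ldots,E^+_{S_1}(v_1).
\]
Since $u$ sits above the bridge $B_u$, we have $u\in U_L^+$, and $S_L^-$ takes the form~\eqref{eq.CombRedSLM}.

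It remains to verify the four conditions of Definition~\ref{def.Cuts.Cross}. Pairwise distinctness~(i) follows from the choice of $(\sigma,\tau)$ as the first collision; face-sharing of consecutive edges~(ii) is inherited from the flower structure inside each $E^\pm_{S_j}(\cdot)$, from Definition~\ref{def.Slit}(ii) at the transitions between consecutive slit vertices, and from the flower of $u$ across $B_u$; the no-three-in-a-face condition~(iii) is forced by Definition~\ref{def.Slit}(iii), which guarantees that each slit vertex supplies at least two consecutive edges of $L$ between transitions, together with the chain property of $B_u$; and the boundary-edge condition~(iv) holds because the first and last edges of the concatenation are the boundary edges attached to $w_1$ and $v_1$, made distinct by the standing hypothesis. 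I expect the main obstacle to be the precise identification of $u$ and the truncation indices $(\sigma,\tau)$: the two slits can in principle come close in several places as they penetrate into $K$, and Definition~\ref{def.Slit}(iii) must be invoked to show that at the earliest collision there is a unique upper-side common neighbor making the bridge $B_u$ well-defined and edge-disjoint from the preceding segments.
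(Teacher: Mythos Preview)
Your overall strategy---walk inward along the left side of $S_2$, bridge through the flower of some vertex $u\in S_1\cup S_2$, then walk outward along the right side of $S_1$---is exactly the paper's construction, and your decomposition of the resulting $L$ into $E^-_{S_2}(w_1),\ldots,E^-_{S_2}(w_\sigma),\,B_u,\,E^+_{S_1}(v_\tau),\ldots,E^+_{S_1}(v_1)$ matches the paper's formula.

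The genuine gap is the determination of $(\sigma,\tau,u)$, which you yourself flag as the main obstacle. Your prescription ``choose maximal $\sigma$ and $\tau$ such that the edge sets are disjoint and a common neighbor $u$ exists'' is not well-posed: with two parameters there is no unique maximal pair, and edge-disjointness of $\bigcup_{i\le\sigma}E^-_{S_2}(w_i)$ and $\bigcup_{j\le\tau}E^+_{S_1}(v_j)$ is not the right obstruction. The collision that forces truncation is not that the same edge occurs in both walks, but that some $E^+_{S_1}(v_i)$ contains an edge $e(v_i,w)$ with $w\in S_2$; such an edge need not lie in any $E^-_{S_2}(w_j)$ at all, yet it still prevents the bridge at $v$ from being reached without repetition. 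The paper resolves this by an asymmetric, sequential search: set $v_{t+1}=w_{s+1}=v$, let $i$ be the \emph{smallest} index with $E^+_{S_1}(v_i)$ containing an edge to $S_2$, then let $j$ be the smallest index with $e(w_j,v_i)\in E^-_{S_2}(w_j)$, and read off $(\sigma,\tau,u)$ by a short case distinction on whether $i=1$ or $j=s+1$. This makes the existence of $u$ automatic rather than an extra hypothesis to be verified.

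Your sketch of condition~(iii) also needs tightening. Definition~\ref{def.Slit}(iii) does not by itself force each $E^\pm_{S}(v_j)$ to contribute at least two edges, so ``each slit vertex supplies at least two consecutive edges'' is not available. The paper instead argues topologically: the vertex path $(w_1,\ldots,w_\sigma,u,v_\tau,\ldots,v_1)$ separates $K$ into two pieces $K_1,K_2$; every edge of the $E^-_{S_2}$- and $E^+_{S_1}$-parts has one endpoint on this path and one in $K_1$, while the interior edges of $E^*(u)$ have one endpoint on the path and one in $K_2$, so no three of them can bound a face.
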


\begin{proof}
We set $v_{t+1}:=v$ and $w_{s+1}:=v$. Let $i$ be the smallest number in
$\{1,...,t+1\}$ for which $E_{S_1}^+(v_i)$ contains an edge $e(v_i,w)$ with
$w\in S_2$. Then let $j$ be the smallest number in $\{1,...,s+1\}$ for which
$E_{S_2}^-(w_j)$ contains an edge $e(w_j,v_i)$.
If $i\neq 1$ and $j\neq s+1$ we set $\tau:=i-1$, $\sigma:=j$ and $u:=v_i$.
If $i\neq 1$ but $j=s+1$, then $i=t$ must hold (otherwise $v$ would
have more then one neighbor in $S_1$), and we set $\tau:=t$, $\sigma:=s$ and
$u:=v$. If $i=1$ we set $\tau:=1$, $\sigma:=j-1$ and $u:=w_j$. In the last case
we have $j>1$, since otherwise $i=j=1$ would contradict the assumption
$E^+_{S_1}(v_1)\cap E^-_{S_2}(w_1)=\emptyset$.

In every case $1\leq\tau\leq t$ and $1\leq\sigma\leq s$ hold, and $u$ is well
defined. We now build $L$ as the concatenation of the edge sequences
\[
E_{S_2}^-(w_1),\ \ldots,\ E_{S_2}^-(w_{\sigma}),\quad
E^*(u), \quad
E_{S_1}^+(v_{\tau}),\ \ldots,\ E_{S_1}^+(v_1),
\]
where $E^*(u)=\big(e(u,w_{\sigma}),e(u,u_1),\ldots,e(u,u_k),e(u,v_{\tau})\big)$
is the negatively oriented chain of edges in the set
$\{e'\in E(v): e(u,w_{\sigma})\leq e'\leq e(u,v_\tau)\}$.

Because $S_1,S_2$ are slits, all edges in the ``$E_{S_1}^+$-part'' and in the
``$E_{S_2}^-$-part'' of $L$ are pairwise different.
Furthermore, it cannot happen that such an edge is contained in both parts
(according to the definition of $u$), or that it belongs to $E^*(u)$ (by definition
of $E^*(u)$). Hence, $L$ satisfies condition~(i) of the crosscut definition
(page~\pageref{def.Cuts.Cross}).

Condition (ii) can easily be verified and (iv) is trivial. In order to prove
(iii) we assume that three edges of $L$ form a face of $K$. By definition of $u$,
the sequence $(w_1,w_2,...,w_{\sigma},u,v_{\tau},...,v_2,v_1)$
divides $K$ into two parts $K_1,K_2$. All edges of the ``$E_{S_1}^+$-part'' and
of the ``$E_{S_2}^-$-part'' have exactly one vertex lying
in $S_1^0\cup S_2^0$ and one in $K_1$, so three of them can never form a face of
$K$. All edges of $E^*(u)\setminus\{e(u,v_{\tau}),e(u,w_{\sigma})\}$ have
exactly one vertex lying in $S_1^0\cup S_2^0$ and one in $K_2$, so again three of
them can never form a face of $K$. The only remaining edges are
$e(u,v_{\tau}),e(u,w_{\sigma})$, but two edges cannot form a face, and a combination
of edges from more than one of the three distinguished edge types can clearly never
form a face. Hence, $L$ is a crosscut with $u\in(S_1\cup S_2)\cap U_L^+$,
and $S_L^-$ has the form {\rm\eqref{eq.CombRedSLM}}.
\end{proof}

The operation described in the proof is well defined by the
slits $S_1$ and $S_2$, and will be referred to as \emph{reflected concatenation}
$S_1\circleddash S_2$ of $S_1$ with $S_2$. It delivers a crosscut $L$, a vertex
$u$, and the reduced slits $S_1^0,S_2^0$. Note that the reflected concatenation
is not commutative.\label{def.reflect}

\subsection{Subdivision by Disk Chains} \label{sec.Subdivision}

Let $v_\beta$ be an arbitrary accessible boundary vertex. In this section we
describe an approach which allows us to apply Lemma~\ref{lem.Upper.Goal}
recursively, until we find a slit $S$ with initial vertex $v_\beta$ such that
$D_v'=D_v$ for all $v\in S$, so especially $D'_{v_\beta}=D_{v_\beta}$. During
this procedure we construct a sequence of crosscuts $L_j$ such that $V_{L_j}^+$
contains $v_\beta$ and the number of elements in $V_{L_j}^+$ is strictly
decreasing for increasing $j$. This procedure will be crucial for proving the
following lemma, and finally Theorem~\ref{thm.CircRigid}.

\smallskip\par

\begin{lem}[]\label{lem.BndUni}
Let $(\mathcal{P},\mathcal{P}')$ be an admissible pair with complex $K$, interior
alpha vertex $v_\alpha$ and crosscut $L$. Then $D'_v=D_v$ for all accessible
boundary vertices $v\in\partial V^*$.
\end{lem}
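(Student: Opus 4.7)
The plan is to prove the lemma by iteratively constructing a sequence of admissible crosscuts $L_0 = L, L_1, L_2, \ldots$, each with $v_\beta \in V_{L_j}^+$ and $|V_{L_{j+1}}^+| < |V_{L_j}^+|$. Since $|V_{L_j}^+|$ is a strictly decreasing sequence of positive integers, the procedure must terminate, and it can only do so at a crosscut $L_N$ for which Lemma~\ref{lem.Upper.Goal} yields a slit with initial vertex $v_\beta$; that lemma then gives $D'_{v_\beta} = D_{v_\beta}$ directly.

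For the inductive step, given admissible $L_j$ with $v_\beta \in V_{L_j}^+$, apply Lemma~\ref{lem.Upper.Goal} to extract a slit $S_j = (v_1^{(j)}, \ldots, v_s^{(j)}, v_\alpha)$ satisfying $D'_v = D_v$ for all $v \in S_j$. If $v_1^{(j)} = v_\beta$, halt. Otherwise, using the accessibility of $v_\beta$ together with the edge-connectedness of the upper component $K_{L_j}^+$, select an auxiliary slit $\tilde S_j$ from $v_\beta$ to $v_\alpha$ whose non-initial vertices are interior to $K$ and lie in $V_{L_j}^+$. After trimming $\tilde S_j$ at its earliest vertex in common with $S_j$, the two slits share only a terminal vertex, so Lemma~\ref{lem.TwoSlitsToCut} produces a combinatoric crosscut $L_{j+1}$, in which (by a suitable choice of the ordering and, if necessary, a follow-up Type~1--3 reduction from Section~\ref{sec.CombiRed} localized near $v_\beta$) we arrange $v_\beta \in V_{L_{j+1}}^+$.

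Two properties must be verified. The strict decrease $|V_{L_{j+1}}^+| < |V_{L_j}^+|$ is immediate, as the interior vertices of $S_j$ that previously lay in $V_{L_j}^+$ are absorbed into $V_{L_{j+1}}^-$ or into the bridge chain of Lemma~\ref{lem.TwoSlitsToCut}, while the reductions near $v_\beta$ move no vertex off the upper side beyond $v_\beta$ itself. For admissibility of $L_{j+1}$, the inclusion $G_{L_j}^- \subset G_{L_{j+1}}^-$ inherits the invariance condition for the original lower neighbors; for the new lower neighbors coming from $S_j$, the identity $D'_v = D_v$ together with the fact that the maximal crosscut of $L_{j+1}$ is routed along the boundary arcs of these disks (as in the construction behind Lemma~\ref{lem.SlitToCut} and Figure~\ref{fig.CutFromSlits}) places $D'_v$ in $G_{L_{j+1}}^-$ automatically.

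The hard part is the admissibility check at the new lower neighbors that originate from the auxiliary slit $\tilde S_j$ and from the bridge chain of Lemma~\ref{lem.TwoSlitsToCut}, since for these vertices the identity $D'_v = D_v$ is not known a priori. One must argue that their $\mathcal P'$-disks cannot escape into $G_{L_{j+1}}^+$, by combining the geometric barrier formed by the matched $S_j$-disks (each of which is a loner for $(\mathcal P, \mathcal P')$, so that Lemma~\ref{lem.Loner.Property} applies) with the admissibility of $L_j$ on its original lower side. Once this combinatorial-geometric verification is in hand, the iteration terminates as planned, yielding $D'_{v_\beta} = D_{v_\beta}$ for every accessible boundary vertex $v_\beta \in \partial V^*$.
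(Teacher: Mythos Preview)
Your iterative framework is correct in spirit, but the construction of the next crosscut $L_{j+1}$ contains a genuine gap that you yourself flag as ``the hard part'' without resolving. The issue is the auxiliary slit $\tilde S_j$: you choose it merely as a combinatorial path from $v_\beta$ to $v_\alpha$, with no control over the disks $D'_v$ for $v\in\tilde S_j$. When vertices of $\tilde S_j$ (and of the bridge chain from Lemma~\ref{lem.TwoSlitsToCut}) become lower neighbors of $L_{j+1}$, admissibility demands $D'_v\subset G_{L_{j+1}}^-$, and there is no mechanism in your argument to guarantee this. The barrier you invoke via Lemma~\ref{lem.Loner.Property} applies only to loners, and the matched $S_j$-disks do not enclose the $\tilde S_j$-disks in any useful way. (A secondary concern: a slit from $v_\beta$ to $v_\alpha$ through interior vertices of $V_{L_j}^+$ need not exist, since the upper component is only edge-connected and may force the path through boundary vertices.)

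The paper avoids this obstacle entirely by \emph{never} introducing an auxiliary slit on which $D'_v=D_v$ is unknown. After the first application of Lemma~\ref{lem.Upper.Goal} yields $S_0$, it uses Lemma~\ref{lem.SlitToCut} (a \emph{single} slit) to build $L_1$, then applies Lemma~\ref{lem.Upper.Goal} \emph{again} to extract a second slit $S_1\subset V_{L_1}^+$ with $D'_v=D_v$ on $S_1$ as well. Only now are $S_0$ and $S_1$ combined via Lemma~\ref{lem.TwoSlitsToCut}. At every subsequent step the two slits fed into the reflected concatenation are (sub-slits of) previously established ``good'' slits, so every new lower neighbor $v$ of $L_{j+1}$ satisfies $D'_v=D_v$ outright, and admissibility is immediate. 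The role of $v_\beta$ is purely to decide which side of the bisection to keep; it is never used to build a slit.
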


\begin{proof}
To begin with, let $S_0=(v_1,\ldots,v_{s},v_\alpha)$ be a slit according
to Lemma~\ref{lem.Upper.Goal}. Let $v_\beta$ be an accessible boundary vertex.
If $v_1=v_\beta$ then $D_\beta'=D_\beta$ and we are done. So let us assume that
$v_\beta\notin S_0$.

By Lemma~\ref{lem.SlitToCut} there exists a crosscut $L_1$ such that
$S_{L_1}^-=(v_1,\ldots,v_{s-1},v_s,v_{s-1}\ldots,v_1)$ and $v_\alpha\in S_{L_1}^+$.
Applying Lemma~\ref{lem.Upper.Goal} again, but now with respect to the crosscut
$L_1$, we get another slit $S_1=(w_{1},\ldots,w_{t},v_\alpha)\subset V_{L_1}^+$,
such that $D_v'=D_v$ for all $v\in S_1$.
If $v_{1}=v_\beta$ then $D_\beta'=D_\beta$ and we are done. So suppose that
$v_\beta\notin S_1$.
\smallskip\par

\noindent
The three boundary vertices $v_1$, $w_1$ and $v_\beta$ are pairwise different,
and we assume, without loss of generality, that they are oriented such that
$w_1<v_\beta<v_1$. This ensures the condition
$E^+_{S_1}(v_1)\cap E^-_{S_2}(w_1)=\emptyset$ of Lemma~\ref{lem.TwoSlitsToCut},
because otherwise $v_\beta$ could be either accessible or a boundary
vertex, but not both. Since, except $v_\alpha$, all vertices of $S_0$ belong
to $V_{L_0}^-$, we have $S_0\cap S_1 = \{v_\alpha\}$.
Consequently, the reflected concatenation $S_0\circleddash S_1$ of $S_0$ with
$S_1$ is well defined. It delivers a crosscut $L_2$, a vertex $v_{\alpha_2}$,
and reduced slits $S_2^- \subset S_0, S_2^+ \subset S_1$ with common terminal
vertex $v_{\alpha_2}$.
By Lemma~\ref{lem.TwoSlitsToCut} the vertex $v_{\alpha_2}$ belongs to $S_1$
or $S_2$ and the
set $U_{L_2}^-$ of lower neighbors of $L_2$ consists solely of elements of
$S_0 \cup S_1$ and of (lower) neighbors of $v_{\alpha_2}$. Since $D_v'=D_v$ for all
$v\in S_0 \cup S_1$, this implies that $L_2$ is an admissible crosscut for
$(\mathcal{P},\mathcal{P}')$.
Moreover, the order of $S_0$ and $S_1$ in the reflected concatenation has been
chosen such that $v_\beta$ belongs to $V_{L_2}^+$.

\begin{figure}[H]
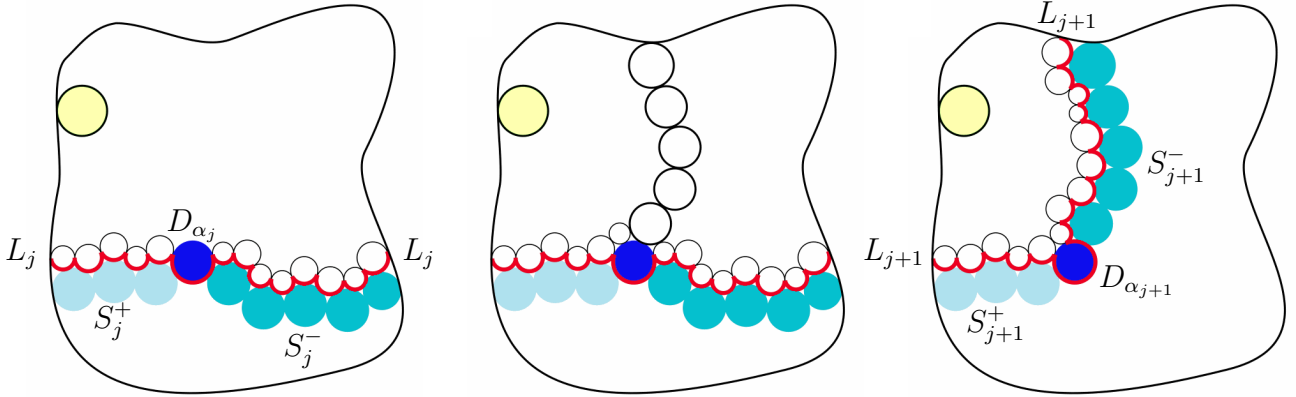

\begin{center}
\begin{overpic}{Figure20a}
\fontsize{12pt}{14pt}\selectfont
\put(22,19){\makebox(0,0)[cc]{$S_j^+$}}
\put(42,43){\makebox(0,0)[cc]{$D_{\alpha_j}$}}
\put(70,12){\makebox(0,0)[cc]{$S_j^-$}}
\put(95,36){\makebox(0,0)[lc]{$L_j$}}
\put(3,36){\makebox(0,0)[rc]{$L_j$}}
\end{overpic}
\hfill
\begin{overpic}{Figure20b}
\fontsize{12pt}{14pt}\selectfont
\end{overpic}
\hfill
\begin{overpic}{Figure20c}
\fontsize{12pt}{14pt}\selectfont
\put(22,18){\makebox(0,0)[cc]{$S_{j+1}^+$}}
\put(60,58){\makebox(0,0)[lc]{$S_{j+1}^-$}}
\put(48,29){\makebox(0,0)[lu]{$D_{\alpha_{j+1}}$}}
\put(4,36){\makebox(0,0)[rc]{$L_{j+1}$}}
\put(40,92){\makebox(0,0)[cb]{$L_{j+1}$}}
\end{overpic}
\caption{Construction of the crosscut $L_{j+1}$ from $L_j$}
\label{fig.ChainSubdiv2}
\end{center}
\end{figure}
\vspace{-4mm}

\noindent
The general step of the procedure is as follows. Assume that we already have
an admissible crosscut $L_j$, the alpha vertex $v_{\alpha_j}$, and the reduced
slits $S_j^-$ and $S_j^+$, such that $v_\beta\in V_{L_j}^+$. Denoting by
$v_j^-$ and $v_j^+$ the initial vertices of $S_j^-$ and $S_j^+$, respectively,
we may assume that $v_j^-<v_\beta<v_j^+$, which will again be essential to ensure
the special condition of Lemma~\ref{lem.TwoSlitsToCut}.

Applying Lemma~\ref{lem.Upper.Goal}, we get a new slit $S_j\subset V_{L_j}^+$,
such that $S_j^-$, $S_j$ and $S_j^+$ are pairwise disjoint, except at their
common terminal vertex $v_{\alpha_j}$, and $D_v'=D_v$ for all $v\in S_j$.

If $v_\beta\in S_j$ we are done. Otherwise we either have
$v_j^-<v_\beta<v_j$ or $v_j<v_\beta<v_j^+$.
In the first case we build the reflected concatenation $S_j^-\circleddash S_j$,
in the second case we form $S_j\circleddash S_j^+$.
The result is a new crosscut $L_{j+1}$, a corresponding alpha-vertex
$v_{\alpha_{j+1}}$, and reduced slits $S_{j+1}^-$, $S_{j+1}^+$.

If follows directly from the construction of the reflected concatenation that
$v_{\alpha_{j+1}},v_\beta\in V_{L_{j+1}}^+$. Moreover,
$v_{\alpha_{j+1}}\in S_j^-$, and hence $D_{\alpha_{j+1}}'=D_{\alpha_{j+1}}$.
To see that $L_{j+1}$ is admissible for the pair $(\mathcal{P},\mathcal{P}')$
it remains to prove that $D_v'\subset G_{L_{j+1}}^-$ for all
$v\in U_{L_{j+1}}^-$.

By Lemma~\ref{lem.TwoSlitsToCut} the set $U_{L_{j+1}}^-$ of lower neighbors
of $L_{j+1}$ consists solely of elements of $S_j^- \cup S_j^+$ and of (lower)
neighbors of $v_{\alpha_{j+1}}$. Since $D_v'=D_v$ for all
$v\in S_j^- \cup S_j^+ \cup \{v_{\alpha_{j+1}}\}$, and
$D_v\subset G_{L_{j+1}}^-$ for all $v\in U_{L_{j+1}}^-$, the assertion follows.

The number of elements in $V_{L_j}^+$ is strictly decreasing in every step, and
hence the procedure must come to end. This can only happen if
$v_\beta\in S_{j^*}$ for some $j^*\in\mathbb{N}$. Because $D_v'=D_v$ for all
$v\in S_j$ with $j\leq j^*$, we have shown $D'_{v_\beta}=D_{v_\beta}$.
\end{proof}

\bigskip

Now we are close to the end. By Lemma~\ref{lem.solid3} the kernel $K^*$ is a strongly
connected complex with vertex set $V^*$. Since we have shown that $D_v'=D_v$
for all boundary vertices $v\in \partial V^*$ of $K^*$, and every boundary vertex
of $K^*$ is also a boundary vertex of $K$ (that is $\partial V^*=V^*\cap\partial V$),
Theorem~11.6 in Stephenson~\cite{SteBook} (on the uniqueness of a locally univalent
packing with presribed combinatorics and given radii of boundary circles) tells us
that $D_v'=D_v$ for all $v\in V^*$, which is the assertion of Theorem~\ref{thm.CircRigid}.

\section{Concluding Remarks} \label{sec.Remarks}

All proofs in this paper work with (simple) geometric or combinatoric arguments,
alone in the very last step we had recourse to a theorem established in the
literature. For purists we mention that even this could have been avoided, at
the expense of adding a few pages to this rather longish text.

Theorem~\ref{thm.CircRigid} can be interpreted as uniqueness result for (the range
packing of) discrete conformal mappings. Here is a simple version:

\begin{thm}\label{thm.conf1}
Suppose that two univalent packings $\mathcal{P}$ and $\mathcal{P}'$ for $K$ fill $G$.
If $D'_\alpha$ and $D_\alpha$ have the same center, and if $D'_\beta\subset D_\beta$
for some boundary vertex $v_\beta$, then $D_v'=D_v$ for all vertices $v\in V^*$.
\end{thm}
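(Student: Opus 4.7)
The plan is to reduce Theorem~\ref{thm.conf1} to the strengthened form of Theorem~\ref{thm.CircRigid} (the one noted directly after its statement, where the invariance assumption $D'_v\subset G^-$ needs to hold only for $v\in U^-$) by constructing a crosscut that wraps tightly around the single boundary disk $D_\beta$.

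Let $u_1,u_2,\ldots,u_n$ be the neighbors of $v_\beta$ listed in their natural cyclic order, with $u_1$ and $u_n$ the boundary predecessor and successor of $v_\beta$. Since $v_\beta\in V^*$ is accessible, it has at least one interior neighbor, so in particular $n\ge 3$. I would define the combinatoric sequence
\[
L := \bigl(e(v_\beta,u_n),\,e(v_\beta,u_{n-1}),\,\ldots,\,e(v_\beta,u_1)\bigr),
\]
i.e., the complete list of edges at $v_\beta$, oriented so that $v_\beta$ lies on the right. Verification of the four items of Definition~\ref{def.Cuts.Cross} is routine: the edges are pairwise distinct; consecutive edges share the face $\langle v_\beta,u_i,u_{i+1}\rangle$; no three consecutive ones lie on a single face, since they meet only at $v_\beta$; and the terminal edges $e(v_\beta,u_n)$ and $e(v_\beta,u_1)$ are boundary edges of $K$. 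Removing these edges isolates $v_\beta$ from the remaining (still connected) complex, so $V_L^- = U_L^- = \{v_\beta\}$, while $v_\alpha\in V_L^+$ because $v_\alpha$ is interior and therefore distinct from $v_\beta$.

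Next I would check that $(\mathcal{P},\mathcal{P}')$ is admissible for $L$. The upper neighbors of $L$ are exactly the disks $D_{u_1},\ldots,D_{u_n}$ tangent to $D_\beta$, so the maximal crosscut $J_L^+$ is composed of arcs of these circles (plus two arcs reaching $\partial G$ beside $D_\beta$), and thereby wraps around $D_\beta$. Because $\mathcal{P}$ fills $G$, the maximal lower domain $G_L^-$ contains $D_\beta$. By hypothesis $D'_\beta\subset D_\beta\subset G_L^-$, which establishes the invariance condition for the unique element $v_\beta$ of $U_L^-$. The remaining items in Definition~\ref{def.Chap} are immediate: both packings fill $G$ and therefore lie in $G$, and the common center of $D_\alpha$ and $D'_\alpha$ lies in $G_L^+$ since $v_\alpha\in V_L^+$ is interior.

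The strengthened form of Theorem~\ref{thm.CircRigid} then delivers $D'_v=D_v$ for every accessible $v\in V$, which is exactly the claim. The only step that is not a direct unfolding of definitions is the geometric assertion $D_\beta\subset G_L^-$; I expect this to follow cleanly from the construction of $J_L^+$ around the incomplete flower of $v_\beta$, and do not anticipate any substantive obstacle beyond it.
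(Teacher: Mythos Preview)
Your approach is exactly the one the paper uses: isolate $D_\beta$ by the crosscut consisting of all edges at $v_\beta$ and invoke Theorem~\ref{thm.CircRigid}. One small slip: you assert $v_\beta\in V^*$ to obtain $n\ge 3$, but the hypotheses do not give accessibility of $v_\beta$; fortunately this is harmless, since every boundary vertex has $n\ge 2$ neighbors and your crosscut verification goes through already for $n=2$ (condition~(iii) is vacuous, and here $V_L^-=U_L^-=\{v_\beta\}$ anyway, so even the unstrengthened Theorem~\ref{thm.CircRigid} suffices).
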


The proof follows immediately from Theorem~\ref{thm.CircRigid} applied to the maximal
crosscut which separates the disk $D_\beta$ from the rest of the packing $\mathcal{P}$
(see the leftmost image of Figure~\ref{fig.Beta}). The condition $D_\beta'\subset D_\beta$
can even be relaxed, it suffices to require that $D'_\beta$ lies in the lower domain
$G_-$ with respect to this crosscut (see the second image of Figure~\ref{fig.Beta}).
Note that both figures show the packing $\mathcal{P}$ and a single disk $D'_\beta$
of $\mathcal{P}'$ in $G_-$.

We point out that the condition $D_\beta'\subset G_-$ is always satisfied
(possibly after exchanging the roles of $\mathcal{P}$ and $\mathcal{P}'$), if the
packings are normalized so that $D'_\beta$ and $D_\beta$ \emph{touch the boundary
$\partial G$ in a generalized sense} at the same \emph{regular point} (or,
more generally, at the same \emph{regular prime end}). Without explaining these
concepts here (see~\cite{KrgWgt}), we mention that a point which lies
on a smooth subarc of $\partial G$ is always regular, while a point at a re-entrant
corner fails to be regular. The two pictures on the right of Figure~\ref{fig.Beta}
illustrate that uniqueness of domain-filling circle packings may be violated in that
case. Both displayed packings $\mathcal{P}$ and $\mathcal{P}'$ fill
a Jordan domain $G$, $D_\alpha$ and $D_\alpha'$ have the same center, and $D_\beta$
and $D'_\beta$ touch $\partial G$ at the same point. While this type of normalization
implies uniqueness of classical conformal mappings, the corresponding circle packings
$\mathcal{P}$ and $\mathcal{P}'$ are completely different.

\begin{figure}[H]
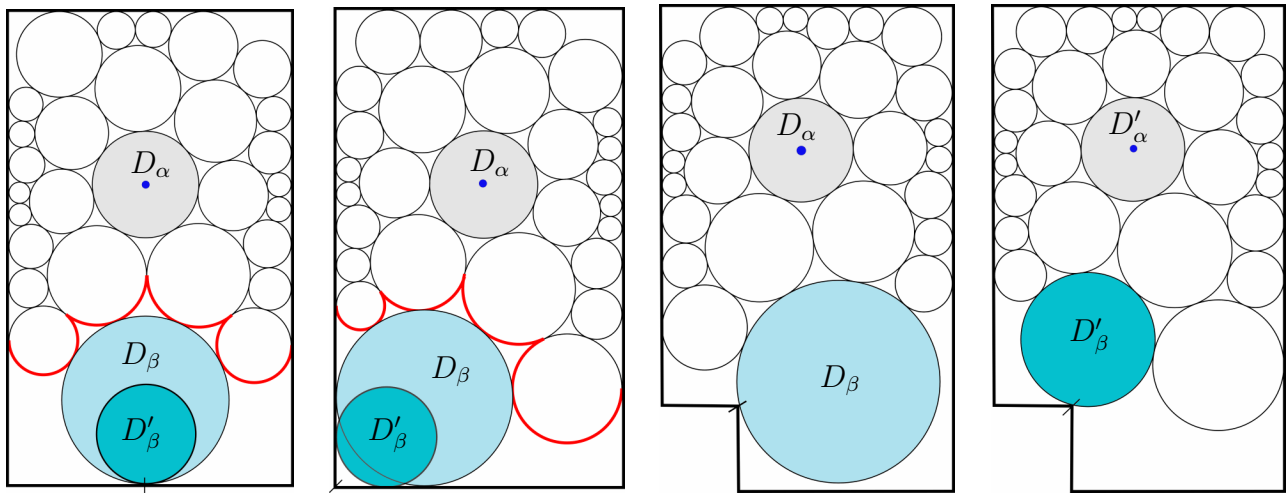

\begin{center}
\begin{overpic}{Figure21a}
\fontsize{12pt}{14pt}\selectfont
\put(30,68){\makebox(0,0)[cc]{$D_\alpha$}}
\put(28,12){\makebox(0,0)[cc]{$D'_\beta$}}
\put(28,28){\makebox(0,0)[cc]{$D_\beta$}}
\end{overpic}
\hfill
\begin{overpic}{Figure21b}
\fontsize{12pt}{14pt}\selectfont
\put(33,68){\makebox(0,0)[cc]{$D_\alpha$}}
\put(25,25){\makebox(0,0)[cc]{$D_\beta$}}
\put(12,12){\makebox(0,0)[cc]{$D'_\beta$}}
\end{overpic}
\hfill
\begin{overpic}{Figure21c}
\fontsize{12pt}{14pt}\selectfont
\put(28,75){\makebox(0,0)[cc]{$D_\alpha$}}
\put(37,23){\makebox(0,0)[cc]{$D_\beta$}}
\end{overpic}
\hfill
\begin{overpic}{Figure21d}
\fontsize{12pt}{14pt}\selectfont
\put(28,75){\makebox(0,0)[cc]{$D'_\alpha$}}
\put(20,32){\makebox(0,0)[cc]{$D'_\beta$}}
\end{overpic}
\caption{Applications of Theorem~\ref{thm.CircRigid} to discrete conformal mapping}
\label{fig.Beta}
\end{center}
\end{figure}
\vspace{-4mm}

\noindent
We further mention that for domain-filling circle packings $\mathcal{P}$ and $\mathcal{P}'$
the assertions of Theorem~\ref{thm.CircRigid} and Theorem~\ref{thm.conf1} can be
strengthened to $D'_v=D_v$ for all $v\in V$, using the results of our forthcoming
paper~\cite{KrgWgt}.

In the general setting of Theorem~\ref{thm.CircRigid}, a complete description
which disks are uniquely determined by a crosscut seems not to be known. The figures
below show some examples. The accessible disks are depicted in darker colors, the
alpha-disk is the darkest one. By Theorem~\ref{thm.CircRigid} these disks are uniquely
determined (rigid) by the crosscut, but the rigid part also comprises the non-accessible
disks shown in brighter color.

\begin{figure}[H]
\begin{center}
\includegraphics{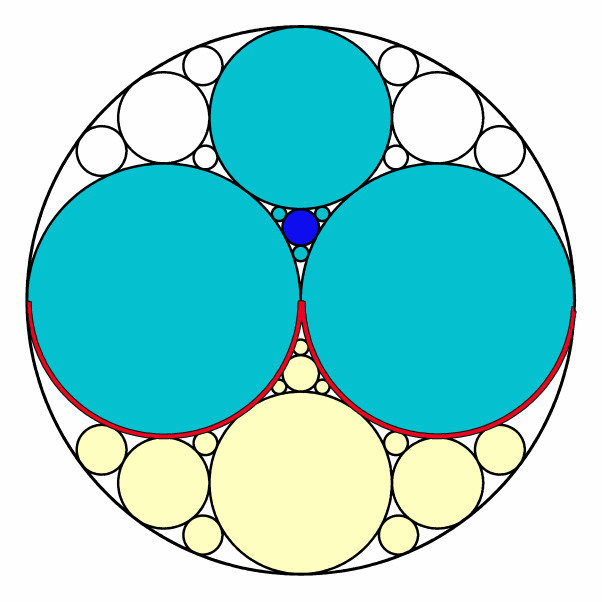}
\hfill
\includegraphics{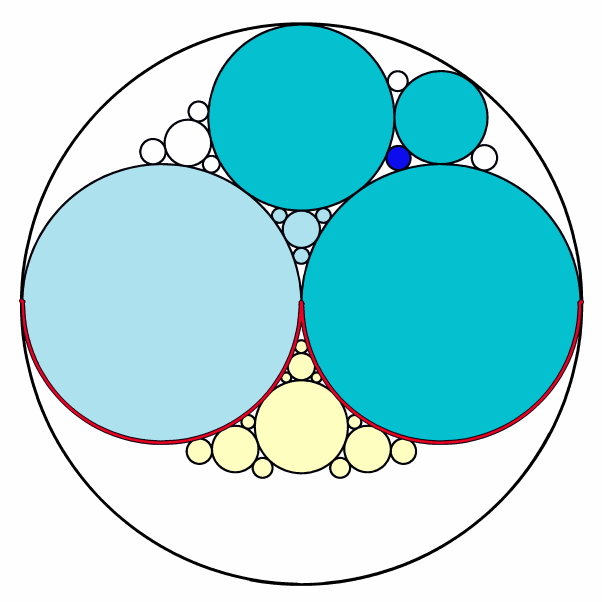}
\hfill
\includegraphics{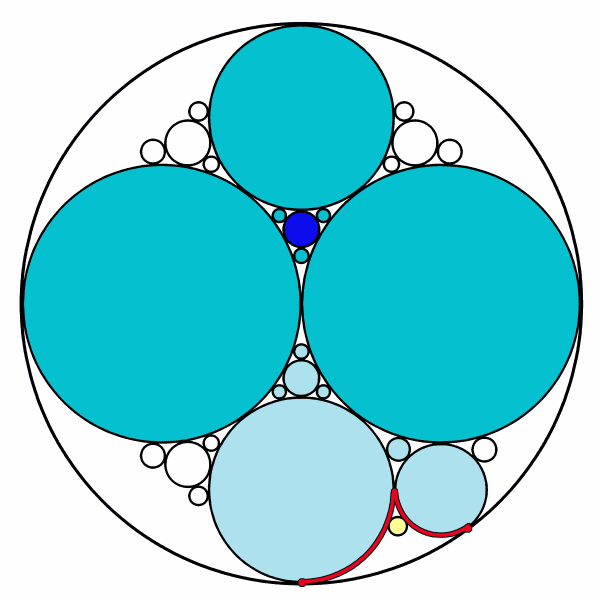}
\caption{Rigid configurations of disks in a packing with crosscut}
\label{fig.GenUnique}
\end{center}
\end{figure}

\noindent
The example on the right is of special interest: a short crosscut separates
only one non-accessible disk $D_\beta$ from the alpha-disk. Here the theorem yields
rigidity for the dark (blue) disks, so that $D_\beta$ seems to have some mysterious
"remote action". However, a little thought shows that there is a chain of rigid
disks (depicted in lighter color) which connects the cut with the alpha-disk
and acts as ``transmission line''.
\smallskip\par

\bigskip\par\noindent

Isn't it wonderful that simple circles can form such fascinating structures?
\bigskip

\section*{Glossary}

\begin{tabbing} \hspace*{20mm} \=   \kill
$\circleddash$,\ $S_1\circleddash S_2$ \> reflected concatenation of slit
$S_1$ with slit $S_2$; p.~\pageref{def.reflect} \\
$\langle u,v,w \rangle $ \> oriented face of $K$ with vertices $u$,$v$ and $w$;
p.~\pageref{def.VEF} \\
$\langle u,v \rangle$ \> oriented edge of $K$ from vertex $u$ to vertex $v$;
p.~\pageref{def.VEF} \\
$\alpha_i, \alpha$ \> special Jordan arcs connecting $y_i^-$ and $y_i^+$, and their
concatenation; p.~\pageref{lem.AlphaArc} \\
$B(v)$ \> the flower of the vertex $v$, a subcomplex of $K$; p.~\pageref{def.flower} \\
$c(u,v)$ \> contact point of the disks $D_u$ and $D_v$,
$c(u,v)=\overline{D}_u \cap\overline{D}_v$; p.~\pageref{def.contact}\\
$c_k^-, c_k^+$ \> contact points of boundary disk $D_k$ with $D_{k-1}$ and $D_{k+1}$,
respectively; p.~\pageref{def.arcs2}\\
$D$ \> union of all disks in $\mathcal{P}$; p.~\pageref{def.arcs} \\
$D^*$ \> carrier of $\mathcal{P}$; p.~\pageref{def.carrier} \\
$D_k, D_k'$ \> boundary disks in $\mathcal{P}$ and $\mathcal{P}'$, respectively;
p.~\pageref{def.delta} \\
$D_v, D_v'$ \> disks in $\mathcal{P}$ and $\mathcal{P}'$, respectively;
p.~\pageref{def.Pack.Pack} \\
$\partial $ \> boundary operator, applied to various objects \\
$\delta(p,q)$ \> positively oriented open circular arc from $p$ to $q$ on $\partial D$;
p.~\pageref{def.arcs} \\
$\delta[p,q]$ \> positively oriented closed circular arc from $p$ to $q$ on $\partial D$;
p.~\pageref{def.arcs} \\
$\delta(c^-_k,c^+_k)$ \>  exterior boundary arc of $D_k$; p.~\pageref{def.arcs2} \\
$\delta(c^+_k,c^-_k)$ \>  interior boundary arc of $D_k$; p.~\pageref{def.arcs2} \\
$\delta_k$ \>  smallest subarc of $\delta[c^-_k,c^+_k]$ which contains $G_k$;
p.~\pageref{def.delta} \\
$E_S$ \> the edge sequence of the slit $S$; p.~\pageref{def.Slit} \\
$E$ \> the set of edges of the complex $K$; p.~\pageref{def.VEF} \\
$\partial E$ \> boundary edges of the complex $K$; p.~\pageref{def.BndVE} \\
$E(v)$ \> the (cyclically ordered) sequence of edges adjacent to $v\in V$;
p.~\pageref{def.VEF} \\
$E_L^\pm(v)$ \> sequences of upper and lower accompanying edges of the crosscut L;
p.~\pageref{def.Accompany.Edge} \\
$E_S^\pm(v)$ \> sequences of edges adjacent to a vertex $v$ in a slit $S$;
p.~\pageref{def.ESv+-} \\
$E_S^\pm$ \> sequences of left and right neighbor edges of slit $S$, respectively;
p.~\pageref{def.ES+-} \\
$e(u,v)$ \> non-oriented edge between vertices $u$ and $v$; p.~\pageref{def.VEF} \\
$e_j$ \> edges in a crosscut, $L=(e_0,e_1,\ldots,e_l)$; p.~\pageref{def.Cuts.Cross} \\
$e_j^-$, $e_j^+$ \> lower and upper accompanying edges of the crosscut $L$, respectively;
p.~\pageref{def.Accompany.Edge} \\
$\eta_k, \eta$ \> segments connecting the centers of $D_k$ and $D_{k+1}$ and their
concatenation; p.~\pageref{def.eta} \\
$F$ \> set of faces of the complex $K$; p.~\pageref{def.VEF} \\
$f(u,v,w)$ \> non-oriented face with vertices $u,v$ and $w$; p.~\pageref{def.VEF} \\
$G$ \> Jordan domain to be filled with $\mathcal{P}$; p.~\pageref{def.G} \\
$G_L^-, G_L^+$ \> lower and upper domains of $G$ with maximal crosscut $J_L^+$,
$G_L^-=\Omega$; p.~\pageref{def.omega} \\
$G_k$ \> set of contact points of $D_k$ with $\partial G$; p.~\pageref{def.delta} ,
$G_k:=\overline{D}_k \cap \partial G$ \\
$g_k^-,g_k^+$ \> first and the last contact point of $D_k$ with $\partial G$;
p.~\pageref{def.delta} \\
$I_k$ \> boundary interstice between $D_k$ and $D_{k+1}$;
p.~\pageref{page.boundary.interstice} \\
$I(u,v,w)$ \> interstice between the disks $D_u,D_v$ and $D_w$;
p.~\pageref{def.interstice} \\
$J_L^0$ \>  polygonal (geometric) crosscut in $G$ for (combinatoric) crosscut $L$ in $K$;
p.~\pageref{def.PolyCrosscut} \\
$J_L^+$ \>  maximal `crosscut', the upper boundary of the lower domain $G_L^-$,
$J_L^+=\omega$; p.~\pageref{def.MaxCrosscut} \\
$K$ \> simplicial 2-complex, combinatorial disk, finite triangulation, $K=(V,E,F)$;
p.~\pageref{def.complex} \\
$K^*$ \> kernel of $K$, largest sub-complex of $K$ with vertex set $V^*$;
p.~\pageref{def.Access} \\
$L$ \> combinatorial crosscut, sequence of edges in $K$; p.~\pageref{def.Cuts.Cross} \\
$l(i)$ \> smallest label $k$ of prime end set $\omega_k^*$ associated with
$\nu_i$; p.~\pageref{def.number}\\
$M$, $M(\mu)$ \> loop of a multiple loner $v_\mu$, a sequence of edges;
p.~\pageref{def.Loop} \\
$\Omega$ \> lower subdomain of $G$ with respect to a maximal crosscut, $\Omega=G_L^-$;
p.~\pageref{def.omega} \\
$\omega$ \> upper boundary of lower domain $\Omega$, concatenation of the $\omega_i$,
maximal crosscut; p.~\pageref{def.omega} \\
$\omega^*$ \> prime ends of $\Omega$ associated with $\omega$\;
p.~\pageref{def.PrimeEnds} \\
$\omega_i$ \> circular subarcs of $\omega$ in between its turning points;
p.~\pageref{def.omega} \\
$\omega_i^*$ \> classes of prime ends associated with the arcs $\omega_i$;
p.~\pageref{def.PrimeEnds} \\
$\nu_i, \pi_i$ \> negatively and positively oriented arcs on $\partial D$ from
$y_i^-,y_i^+$ to $\omega$, respectively; p.~\pageref{def.NuPi}  \\
$\nu_i^+, \pi_i^+$ \> terminal points of the arcs $\nu_i, \pi_i$, respectively;
p.~\pageref{lem.Lone.NuPiToOmega} \\
$\nu_i^*, \pi_i^*$ \> prime ends of $\Omega$ associated with $\nu_i, \pi_i$, respectively;
p.~\pageref{def.Prime.NuPi} \\
$\mathcal{P}$ \> a univalent circle packing for $K$ filling $G$;
p.~\pageref{def.Pack.Pack}, p.~\pageref{def.Chap} \\
$\mathcal{P}'$ \> a univalent circle packing for $K$ in $G$; p.~\pageref{def.Chap} \\
$r(i)$ \> largest label $k$ of prime end set $\omega_k^*$ associated with
$\pi_i$; p.~\pageref{def.number}\\
$S$ \> combinatoric slit, a sequence of vertices; p.~\pageref{def.Slit} \\
$S_L^-$, $S_L^+$ \> sequences of lower and upper accompanying vertices of $L$,
respectively; p.~\pageref{def.Accompany.Vertex} \\
$t_i$ \> turning points of the upper boundary $\omega$, cusps of $\Omega$;
p.~\pageref{def.omega} \\
$U_L^-$, $U_L^+$ \> sets of lower and upper neighbors of $L$, respectively,
$U_L^-\subset V_L^-, U_L^+\subset V_L^+$; p.~\pageref{def.lower.upper} \\
$U_M$ \> sequence of the vertices in $V_M$ for a loop $M$; p.~\pageref{def.Loop} \\
$V$ \> vertex set of the complex $K$; p.~\pageref{def.VEF} \\
$V^*$ \> the set of all accessible vertices of $K$; p.~\pageref{def.Access} \\
$\partial V$ \> boundary vertices of the complex $K$; p.~\pageref{def.BndVE} \\
$V_L^-,V_L^+$ \> lower and upper vertices of $K$ with crosscut $L$, respectively,
subsets of $V$; p.~\pageref{def.lower.upper} \\
$V_M$ \> set of all vertices met by a loop $M$; p.~\pageref{def.Loop} \\
$v_\alpha$ \> alpha vertex of $K$, a distinguished interior vertex;
p.~\pageref{def.Access} \\
$v(i)$ \> vertex of the disk which contains the circular arc $\omega_i$,
$v(i)\in U_L^+$; p.~\pageref{def.Yi} \\
$x_k, X$ \> contact points of upper with lower disks in $\mathcal{P}$,
the set of all $x_k$; p.~\pageref{def.X} \\
$X_i$ \> sets of contact points $x_k$ on $\omega_i$, $X_i \subset X$;
p.~\pageref{def.XYi} \\
$y_-,y_+$ \> initial point and terminal point of $\alpha$, respectively;
p.~\pageref{def.yz-} \\
$y_k, Y$ \> contact points of upper with lower disks in $\mathcal{P}'$,
the set of all $y_k$; p.~\pageref{def.Y} \\
$y_i^-,y_i^+$ \> minimal and maximal element of $Y_i$, respectively;
p.~\pageref{def.Yi}\\
$Y_i$ \> sets of contact points $y_k$ with $x_k\in \omega_i$, $Y_i \subset Y$;
p.~\pageref{def.XYi} \\
$z_-,z_+$ \> terminal points of $\nu_1$ and $\pi_n$, respectively; p.~\pageref{def.yz-} \\
$z_k$ \> shifted contact points when $y_k$ is critical; p.~\pageref{def.Z} \\

\end{tabbing}

\end{document}